\documentclass[12pt,a4paper]{article}

\usepackage{amssymb}
\usepackage{amsmath, amsthm}
\usepackage{arydshln}
\usepackage{epsfig}
\usepackage{setspace}

\usepackage{geometry}

\geometry{left=27mm,right=27mm,top=27mm,bottom=27mm}

\def\RR{{\mathbb{R}}}

\numberwithin{equation}{section}

\newcommand{\grad}{\mathop{\rm grad} }

\newtheorem{Thm}{Theorem}[section]
\newtheorem{Prop}[Thm]{Proposition}
\newtheorem{Lem}[Thm]{Lemma}
\newtheorem{Cor}[Thm]{Corollary}
\theoremstyle{definition}

\newtheorem{Rem}[Thm]{Remark}
\newtheorem{Ex}[Thm]{Example}

\title{On a manifold formulation\\ 
	of self-concordant functions}
\author{Hiroshi HIRAI \\
	Department of Mathematical Informatics, \\
	Graduate School of Information Science and Technology,   \\
	The University of Tokyo, Tokyo, 113-8656, Japan.\\
	\texttt{\normalsize hirai@mist.i.u-tokyo.ac.jp}
}
\begin{document}
	\maketitle
\begin{abstract} 
	In this paper, we address an extension of the theory of self-concordant functions 
	for a manifold. We formulate the self-concordance of a geodesically convex function
	by a condition of the covariant derivative of its Hessian, and 
	verify that many of the analogous properties, such as the quadratic convergence of Newton's method 
	and the polynomial iteration complexity of the path-following method, 
	are naturally extended. 
	However it is not known whether a useful class of self-concordant functions/barriers 
	really exists for non-Euclidean manifolds.
	To this question, 
	we provide a preliminary result that the squared distance function 
	in the hyperbolic space of curvature $- \kappa$ is $\sqrt{\kappa}/2$-self-concordant 
	and the associated logarithmic barrier of a ball of radius $R$ is 
	an $O(\kappa R^2)$-self-concordant barrier. We also give an application to the minimum enclosing ball in a hyperbolic space.
\end{abstract}
Keywords: geodesically convex optimziation, interior point method, 
self-concordant functions, information geometry
\section{Introduction}
The present paper addresses the power of {\em geodesically convex optimization}
from computational complexity perspectives.
This is motivated by the recent development 
on operator scaling and its generalization~\cite{AGLOW,BFGOWW,GGOW}, where 
several important problems in mathematical science
are shown to be formulated as geodesically convex optimization on Riemannian manifolds.
By first- or second-order methods on the manifolds, 
some of them are successfully solved in polynomial time~\cite{AGLOW,BFGOWW}.
On the other hand, due to the limitation of current methods~\cite{FR2021}, 
many of them still remain to desire a theoretically good algorithm.
In the literature, 
the design of optimization methods on manifolds, 
which utilizes geodesic convexity and has polynomial iteration complexity, 
is an important issue. Particularly, several papers 
asks an interior-point polynomial algorithm 
for geodesically convex optimization on manifolds; 
see \cite[p. 99]{BFGOWW} and see also \cite{BLNW,FR2021}.

For Euclidean convex optimization, interior-point methods 
provide a powerful framework to derive polynomial time complexity. 
The underlying mathematical basis is established 
by the theory of {\em self-concordant functions} 
due to Nesterov and Nemirovskii~\cite{NN94}.
The aim of the present paper is 
to examine an extension of self-concordant functions and 
the associated path-following method for a manifold setting.
Actually, 
such an attempt has already been addressed by Udriste~\cite{Udriste97} 
and Jiang, Moore, and Ji~\cite{JMJ07}. 
They considered self-concordant functions on general manifolds, 
and claimed several results analogous to the ones for the Euclidean case.
However, we found that the both papers 
contain several vague points on treatments of tensor fields and 
covariant differentiation $\nabla$.
For example, the both papers assume that trilinear form 
$(X,Y,Z) \mapsto \nabla_{X}\nabla_{Y} \nabla_{Z} f$ is symmetric.
This is not true, 
even if $\nabla$ is a ``symmetric" connection ($\nabla$ is torsion-free); 
see Lemma~\ref{lem:asymetric}.
Although their arguments 
may be justified by appropriate modifications and corrections, 
we here examine a ``faithful adaptation" 
of arguments in Nesterov's books~\cite{Nesterov04,Nesterov18} for a manifold setting, 
by means of the standard language of differential geometry.

The first contribution of this paper is to show 
that such an adaptation is actually possible.
A naive definition of the self-concordance (SC) for a Riemannian manifold $M$ is 
by differentiation via geodesic $\gamma:[0,1] \to M$ as
$|(f \circ \gamma)'''(t)| \leq 2 \sigma (f \circ \gamma)''(t)$.
We point out that, due to the asymmetry of $\nabla_{X}\nabla_{Y} \nabla_{Z} f$,  
a lemma of symmetric trilinear forms by Nesterov and Nemirovski~\cite[Proposition 9.1.1]{NN94} is not applicable.
Consequently, 
this naive definition is not enough to deriving desirable properties of SC.
We here employ a stronger condition as a definition of SC, 
which concerns the covariant derivative of the Hessian $Hf$ of $f$
and equivalents the original if $M = \RR^n$.
Then we verify several required properties of SC analogous to the Euclidean case.
Although many of presented arguments may be straightforward adaptations 
of those in \cite{Nesterov04,Nesterov18}, 
they clarify differential geometric nature of SC properties, which are  
hidden for the Euclidean case.
Particularly, we incorporate viewpoints of {\em information geometry}~\cite{AN00} 
to the quadratic convergence proof of Newton's method. 
We believe that these are insightful even in the original Euclidean setting. 

The second contribution of this paper addresses the next question:
For non-Euclidean manifolds,  
do nontrivial, or even useful, classes of SC functions really exist ? 
We show that
the squared distance function $x \mapsto \frac{1}{2} d(p,x)^2$
in the hyperbolic space of curvature $- \kappa < 0$
is $\sqrt{\kappa}/2$-self-concordant.
Together with the logarithmic barrier construction, 
any ball $B$ of radius $R$ admits an $O(\kappa R)$-self concordant barrier.
We exhibit that our results can be applied 
to the {\em minimum enclosing ball problem} in a hyperbolic space~\cite{NH15}.
This suggests the existence of an SC-barrier for a more general convex set 
and its further applications. 
On the other hand, since the order of the barrier parameter is tight, 
the presented barrier of a ball may be unsatisfactory;  
the iteration complexity of the path-following method involving this barrier
depends on radius $R$. 
This suggests a further difficulty of constructing a good SC-barrier, 
which is not easy already in the Euclidean case.
Nevertheless, we hope that these results will be a springboard for discussion
and stimulate further research in this direction.

\paragraph{Related work.}
Optimization on manifolds (Riemannian optimization) is one of active areas 
in continuous optimization.
In its modeling flexibility, various problems in machine learning, statistics, 
and control theory can be formulated in Riemannian optimization, 
and (practically) efficient algorithms have been developed; 
see textbooks \cite{AMS07, Boumal23,Sato21}.
The design of theoretically efficient optimization algorithms, 
with basis on geodesic convexity, 
is a challenging new direction of research; see \cite[Chapter 11]{Boumal23} 
and references therein.

The {\em ellipsoid method} is another powerful algorithm to 
derive polynomial-time complexity.
Rusciano~\cite{Rusciano19} introduced a cutting plane method, 
similar to the ellipsoid method, for Hadamard manifolds. 
He showed the existence of a cutting plane sequence 
such that the volume of the domain including an optimum decreases in linear order.
However, the sequence is non-constructive.
Also, as pointed out in \cite{FR2021},  
the volume of a ball in a hyperbolic space
depends exponentially on its diameter (see e.g., \cite[p. 189]{Sakai96}), 
and hence the iteration complexity depends on the diameter of the domain.  
So this algorithm, even if it could be implemented, 
has difficulty for problems on a large domain.

Already in Euclidean case, 
interior-point methods are closely linked to Riemannian geometry.
Nesterov and Todd~\cite{NT02} regarded the interior of the feasible region of LP 
as a Riemannian manifold with Hessian metric induced 
by a self-concordant barrier.
They connected the Riemannian length of the central curve
with the complexity of interior-point methods. 
Along this line, 
Ohara and Tsuchiya~\cite{OT07} developed 
an information geometric framework for LP, and provided 
a sharp iteration bound of an interior point method
by means of a differential geometric measure of the central curve; 
see \cite{KOT13} for further generalization.
We are partly inspired by their framework: 
Our setting is on a manifold $M$ with a connection $\nabla$ and 
a Riemannian metric  $\langle, \rangle_f$ given by the Hessian of an SC-function $f$.
Here $\nabla$ is not  necessarily the Levi-Civita connection of $\langle, \rangle_f$, 
and the {\em dual connection} $\nabla^*$,  
an information geometric concept, plays a role.

One of important aspects of self-concordant functions is 
its relation to the {\em Fenchel-Legendre duality} 
in convex analysis; see \cite[Section 5.1.5]{Nesterov18}.
Our presented framework lacks such an aspect.
Recently, several analogues of Fenchel-Legendre duality 
are considered for Hadamard manifolds~\cite{HH22,SBH22}.
It is an interesting direction to incorporate 
them into the framework or to take inspiration from them 
to introduce a more reasonable definition of self-concordance on manifolds.

\paragraph{Organization.} 
The rest of this paper is organized as follows. 
In Section~\ref{sec:pre}, we give preliminaries 
on linear algebra and differential geometry.
In Section~\ref{sec:SC}, we present the results on self-concordance.
In Section~\ref{sec:distance}, 
we present the results on squared distance functions on hyperbolic spaces.

\section{Preliminaries}\label{sec:pre}	
\subsection{Bilinear forms}
In this paper, all vector spaces are finite dimensional and over $\RR$.
For a vector space $V$, let $V^*$ 
denote the dual space of $V$, i.e., the vector space of all linear functions $V \to \RR$.
A vector $p$ in $V^*$ is also written as $p(\cdot)$; this means that 
the dot may be replaced by any vector $v \in V$, such as $p(v)$.
 
Let $U,V$ be vector spaces.
For a linear map $\tau:U \to V$, 
let $\tau^*:V^* \to U^*$ denote the dual map  defined by $p(\cdot) \mapsto p(\tau(\cdot))$.
For a multilinear form $G \in  V^* \otimes \cdots \otimes V^*$ 
on $V$, let $\tau^*G \in U^* \otimes \cdots \otimes U^*$ 
denote the multilinear form on $U$ defined by $\tau^* G := G(\tau(\cdot),\cdots,\tau(\cdot))$.  

A bilinear form $G \in V^* \otimes V^*$ is said to be {\em symmetric} if 
$G(u,v) = G(v,u)$ for $u,v \in V$.
A symmetric bilinear form $G \in V^* \otimes V^*$ is viewed as $G: V \to V^*$ 
by $v \mapsto G(v,\cdot) = G(\cdot, v)$. 
Suppose that $G$ is nondegenerate. 
The inverse map $G^{-1}: V^* \to V$ is  
viewed as a symmetric bilinear form on $V^*$ by 
$G^{-1}(p,q) := G(G^{-1}p, G^{-1}q)$.
If
$\tau:U \to V$ is a linear isomorphism, 
then it holds, 
\begin{equation}\label{eqn:tau}
	(\tau^* G)^{-1} = \tau^{-1} G^{-1},
\end{equation}
where $(\tau^{-1})^{**} = \tau^{-1}$. 
Indeed, it holds $v \mapsto (\tau^*G) v = G (\tau(v), \tau(\cdot)) = \tau^* (G \tau v)$, 
and hence $p \mapsto (\tau^* G)^{-1}p = \tau^{-1} (G^{-1} (\tau^*)^{-1} p) 
= G^{-1} ((\tau^{-1})^*(p), (\tau^{-1})^*(\cdot))$.

%
 
For two symmetric bilinear forms $F,G$ on $U$, 
the notation $F \preceq G$ means that $F(u,u) \leq G(u,u)$ for all $u \in U$.
If $F\succeq 0$, then $F$ is called positive semidefinite.
In addition, if $F$ is nondegenerate, $F$ is called positive definite.
Let $G$ be a positive definite bilinear form on $U$. 
Then $G$ defines the inner product 
$\langle , \rangle$ on $U$ by $\langle u,v \rangle :=G(u,v)$, and the norm $\|u\| := \sqrt{\langle u,u\rangle}$ accordingly.
Let $F: U \to U^*$ be another symmetric bilinear form. 
The linear transformation $G^{-1}F: U \to U$ is {\em symmetric} with respect to $\langle, \rangle$, 
i.e., $\langle u, G^{-1} F v \rangle = \langle G^{-1}F u, v \rangle (= F(u,v))$.
In particular, $G^{-1}F$ can be diagonalized with real eigenvalues and orthonormal eigenvectors.
Then 
\begin{equation}\label{eqn:G^-1F}
	F \preceq G \Leftrightarrow \mbox{each eigenvalue of $G^{-1}F$ is at most $1$}.	
\end{equation}
Suppose, in addition, that $F$ is also positive definite.
Then it holds
\begin{equation}\label{eqn:F<G}
F \preceq G \Leftrightarrow F^{-1} \succeq G^{-1}.	
\end{equation}
These can be seen from 
the matrix representation $\tilde F =(F(e_i,e_j)), \tilde G = (G(e_i,e_j))$ of $F,G$ with respect to an orthonormal basis for $\langle,\rangle$ (so that $\tilde G = \tilde G^{-1} = I$).

Additionally, $G$ defines an inner product $\langle , \rangle^*$ and norm $\|\cdot \|^*$ of the dual space $U^*$
by $\langle p,q\rangle^* := G^{-1}(p,q)$ and $\|p\|^* := \sqrt{\langle p,p\rangle^*}$.
Then the following version  of Cauchy-Schwarz inequality holds:
\begin{equation}\label{eqn:CM}
	p(u) \leq \|p\|^*\|u\| \quad (u \in U, p \in U^*).
\end{equation}
This follows from $p(u) = (GG^{-1}p)(u) = G(G^{-1}p,u) 
= \langle G^{-1}p, u\rangle \leq \|G^{-1}p\|\|u\|$ 
and $\|G^{-1}p\|^2 = G(G^{-1}p,G^{-1}p) = (\|p\|^*)^2$.

\subsection{Manifold with an affine connection}
We here introduce basic terminologies for smooth manifolds; 
our references are \cite{Boumal23,Sakai96}.
Throughout the paper,
sufficient smoothness is assumed for functions, maps, and vector/tensor fields on manifolds. 
Let $M$ be an $n$-dimensional manifold.
For $x \in M$, let $T_x = T_x(M)$ denote the tangent space at $x$.
For a vector field $X$ on $M$, let $X_x \in T_x$ denote the vector at $x$.
For a map $\varphi: M \to N$ and $x \in M$, where $N$ is a manifold, 
let $D\varphi(x): T_x(M) \to T_{\varphi(x)}(N)$ denote the differential of $\varphi$ at $x$.
For a curve $\gamma:[a,b] \to M$, 
let $\dot \gamma(t) := D\gamma(t) (d/dt)$ 
be the vector tangent to $\gamma$ at $t \in [a,b]$.

Suppose that $M$ has an affine connection $\nabla$ on tangent bundle $T(M) = \bigcup_{x \in M} T_x$.
Given a curve $\gamma:[a,b] \to M$ with $x = \gamma (a)$, 
the connection $\nabla$ determines 
the {\em parallel transport} along $\gamma$, 
which is a linear isomorphism $\tau_{\gamma,t}$ 
from $T_x$ to $T_{\gamma(t)}$ for $t \in [a,b]$.
For a vector field $Y$ and vector $u \in T_x$ at $x \in M$, 
the {\em covariant derivative} 
$\nabla_u Y$ of $Y$ by $u$  
is given by $\nabla_u Y = \lim_{t \to 0} (\tau_{\gamma,t}^{-1} Y_{\gamma(t)} - Y_{x})/t$, 
where $\gamma:[0,l] \to M$ is any smooth curve with $\gamma(0) = x$ 
and $\dot \gamma(0) = u$. 
The covariant derivative $\nabla_X Y$ of $Y$ by a vector field $X$
 is a vector field so that $(\nabla_X Y)_x = \nabla_{X_x} Y$. 
 The {\em geodesic} is a curve $\gamma:[0,l] \to M$ 
 such that $\dot \gamma(t) = \tau_{\gamma,t}(\dot \gamma(0))$.
 A geodesic $\gamma$ is a solution of differential equation 
 $\nabla_{\dot\gamma(t)}\gamma(t) =0$, and 
 it is uniquely determined from initial point $x = \gamma(0)$ and direction $u=\dot\gamma(0)$.
 Particularly, given $x \in M$ and $u \in T_x$, 
 there is a geodesic $\gamma:[0,l] \to M$ with $x = \gamma(0)$ and direction $u=\dot\gamma(0)$. 
 In this case, we write $\gamma(t) = \exp_x t u$.
 The map $u \mapsto \exp_x 1 u$ is defined on 
 a neighborhood of the origin of $T_x$, and 
 is called the {\em exponential map}.

The covariant derivative is extended for tensor fields;
see \cite[Section 10.7]{Boumal23} and \cite[II.1.3]{Sakai96}.
In this paper, we consider it for covariant tensor fields.
For a $(0,k)$-tensor field $G=(G_x)_{x \in M}$, 
where $G_x \in T_x^{*} \otimes \cdots \otimes T_x^*$, 
the covariant derivative $\nabla_u G$ by $u \in T_x$ is defined by
\begin{equation}\label{eqn:nabla_uG}
\nabla_u G  :=  \left.\frac{\rm d}{{\rm d}t}\tau_{\gamma,t}^* G_x\right|_{t=0},
\end{equation}
where $\gamma:[0,l] \to M$ is any curve with $\gamma(0) = x$ and $\dot{\gamma}(0) = u$.
The covariant derivative $\nabla_X G$ by vector field $X$ is defined as 
a $(0,k)$-tensor field obtained by $\nabla_X G = (\nabla_{X_x}G)_{x \in M}$.
The covariant derivative $\nabla_X G$ can also be written as (or defined by)
\begin{equation}
	(\nabla_X G)(Y_1,\ldots,Y_k) = X(G(Y_1,\ldots,Y_k)) - \sum_{i=1}^k G(Y_1,\ldots, \nabla_X Y_i,\ldots,Y_k),
\end{equation}
where $X,Y_1,\ldots,Y_k$ are vector fields.
Then
$(Y_1,\ldots,Y_k,X) \mapsto (\nabla_X G)(Y_1,\ldots,Y_k)$ is a $(0,k+1)$-tensor field.

In this paper, we assume that $(M,\nabla)$ satisfies the following conditions: 
\begin{itemize}
	\item $\nabla$ is torsion-free:
	$\nabla_X Y - \nabla_Y X = [X,Y]$ holds for vector fields $X,Y$.
	\item For each point $x \in M$, 
	the exponential map $\exp_x$ is defined on $T_x$ 
	and is a surjection from $T_x$ to $M$.
\end{itemize}
We basically consider the case where $M$ is a complete Riemannian manifold 
and $\nabla$ is the Levi-Civita connection; 
then $(M,\nabla)$ satisfies the above conditions; see \cite[III.1]{Sakai96}.
However, by its affine invariant nature, 
our argument in Section~\ref{sec:SC}
does not use the intrinsic Riemannian metric $\langle, \rangle$ of $M$, 
and also consider another metric $\langle,\rangle_f$ 
induced by a smooth function $f$ in question.
So we do not make $\langle, \rangle$ explicit.

Let $f: M \to \RR$ be a smooth function. 
The differential $Df = (Df_x)_{x \in M}$ of $f$ is a $(0,1)$-tensor field, where 
$Df_x =  Df(x): T_x \to T_{f(x)}(\RR) = \RR$ is given by 
\begin{equation}
Df_x(u) = \left.\frac{\rm d}{{\rm d}t} f(\gamma(t))\right|_{t=0} \quad (u \in T_x)
\end{equation}
 for any curve $\gamma:[0,l] \to \RR$ with $\dot \gamma(0) = u$.
 If $f$ is viewed as a $(0,0)$-tensor, then
$Df(X)$ equals $\nabla_X f$. 
A further differentiation of $Df$ yields  
a $(0,2)$-tensor field $Hf = (Hf_x)_{x \in M}$, called the {\em Hessian} of $f$, as 
\begin{equation}
	Hf(X,Y) := (\nabla_X Df) (Y) = X(Df(Y)) - Df(\nabla_X Y), 
\end{equation}
where $X,Y$ are vector fields.
In the case where $\nabla$ is the Levi-Civita connection 
of a Riemannian manifold, this matches the usual definition
of the Hessian
$(X,Y) \mapsto \langle \nabla_X \grad f, Y \rangle$; see \cite[Example 10.78]{Boumal23}.

The bilinear form $Hf_x \in T^{*}_x \otimes T^*_x$ 
is called the Hessian of $f$ at $x \in M$.
The following is well-known. See \cite[Proposition 5.15]{Boumal23}.
\begin{Lem}
	$Hf_x$ is a symmetric bilinear form on $T_x$.
\end{Lem}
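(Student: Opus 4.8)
The plan is to prove symmetry of $Hf_x$ by reducing it to the torsion-freeness of $\nabla$. Recall that, by definition, $Hf(X,Y) = X(Df(Y)) - Df(\nabla_X Y)$ for vector fields $X,Y$, and that $Df(Z) = \nabla_Z f = Z(f)$ when $f$ is viewed as a scalar. So the first step is to rewrite
\[
Hf(X,Y) = X(Y(f)) - (\nabla_X Y)(f).
\]
Interchanging $X$ and $Y$ and subtracting yields
\[
Hf(X,Y) - Hf(Y,X) = \bigl(X(Y(f)) - Y(X(f))\bigr) - \bigl(\nabla_X Y - \nabla_Y X\bigr)(f) = [X,Y](f) - [X,Y](f) = 0,
\]
where the first bracketed term equals $[X,Y](f)$ by the definition of the Lie bracket acting on a scalar function, and the torsion-free hypothesis $\nabla_X Y - \nabla_Y X = [X,Y]$ handles the second. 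This is essentially the whole argument at the level of vector fields.

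The one subtlety is that symmetry of $Hf_x$ is a pointwise claim about the bilinear form on $T_x$, whereas the computation above manipulates vector fields. So the second step is to justify that the value $Hf_x(u,v)$ for $u,v \in T_x$ depends only on $u$ and $v$, not on the chosen vector-field extensions — i.e. that $Hf$ is genuinely tensorial in both slots. This is the standard tensoriality check: $Hf$ is $C^\infty(M)$-linear in each argument, which follows from the Leibniz-type behaviour of $\nabla$ and of the differential; alternatively one may simply invoke that $Hf = \nabla Df$ is defined as a $(0,2)$-tensor field in the text preceding the lemma. Once tensoriality is in hand, one picks, for given $u,v \in T_x$, any vector fields $X,Y$ with $X_x = u$, $Y_x = v$, applies the displayed identity, and evaluates at $x$ to conclude $Hf_x(u,v) = Hf_x(v,u)$.

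I do not expect a genuine obstacle here; the proof is short and the only thing to be careful about is not to conflate the vector-field-level computation with the pointwise statement, and to make sure the Lie bracket identity $[X,Y](f) = X(Y(f)) - Y(X(f))$ and the torsion-free condition are cited in the right places. If one wanted an alternative route, one could instead work in normal coordinates at $x$ (where the Christoffel symbols vanish at the point), so that $Hf_x$ reduces to the ordinary Hessian matrix of second partial derivatives, which is symmetric by equality of mixed partials; but the intrinsic argument above is cleaner and is the one I would present.
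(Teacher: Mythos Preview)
Your proof is correct and is essentially the same as the paper's: both reduce the symmetry $Hf(X,Y)=Hf(Y,X)$ to the identity $XYf - YXf = [X,Y]f$ combined with the torsion-free condition $\nabla_X Y - \nabla_Y X = [X,Y]$. Your extra remark on tensoriality is fine but already covered by the paper's setup, where $Hf = \nabla Df$ is introduced as a $(0,2)$-tensor field.
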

\begin{proof}
	This follows from $(\nabla_X Df)(Y) = X (Df(Y)) - Df(\nabla_X Y) =  X Y f - Df(\nabla_Y X + [X,Y]) = (XY - [X,Y]) f - Df(\nabla_Y X)= YXf - Df(\nabla_Y X) = (\nabla_Y Df)(X)$.
\end{proof}
The diagonal of the Hessian is given as follows. See \cite[Eaxmple 10.81]{Boumal23}.
\begin{Lem}\label{lem:Hf(u,u)} For $x \in M$ and $u \in T_x$, we have
	\[
	Hf_x(u,u) = \left.\frac{{\rm d}^2}{{\rm d}t^2} f(\exp_x tu) \right|_{t=0}.
	\]
\end{Lem}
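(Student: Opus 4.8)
The plan is to evaluate the scalar function $t \mapsto f(\exp_x tu)$ directly along the geodesic $\gamma(t) := \exp_x tu$ (so $\gamma(0) = x$ and $\dot\gamma(0) = u$) and to identify its second derivative at $t=0$ with $(\nabla_u Df)(u)$, which equals $Hf_x(u,u)$ by the definition of the Hessian. The one structural fact that makes this work is that $\gamma$ is a geodesic: by the very definition of geodesics recalled in the excerpt, $\dot\gamma(t) = \tau_{\gamma,t}(\dot\gamma(0)) = \tau_{\gamma,t}(u)$, i.e.\ the velocity is the parallel transport of $u$; equivalently $\nabla_{\dot\gamma(t)}\dot\gamma(t) = 0$.

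First I would differentiate once. By the definition of $Df$,
\[
	\frac{\mathrm d}{\mathrm dt} f(\gamma(t)) = Df_{\gamma(t)}(\dot\gamma(t)).
\]
Then I would rewrite the right-hand side using the geodesic identity $\dot\gamma(t) = \tau_{\gamma,t}(u)$. Since $\tau_{\gamma,t}^*$ denotes the dual of the parallel transport $\tau_{\gamma,t}\colon T_x \to T_{\gamma(t)}$,
\[
	Df_{\gamma(t)}(\dot\gamma(t)) = Df_{\gamma(t)}(\tau_{\gamma,t}(u)) = \bigl(\tau_{\gamma,t}^* Df_{\gamma(t)}\bigr)(u),
\]
so that $t \mapsto \tfrac{\mathrm d}{\mathrm dt} f(\gamma(t))$ is the value at the fixed vector $u$ of the smooth curve $t \mapsto \tau_{\gamma,t}^* Df_{\gamma(t)}$, which lives in the single vector space $T_x^*$.

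Differentiating a second time and interchanging $\tfrac{\mathrm d}{\mathrm dt}$ with evaluation at $u$ (a $t$-independent linear functional on $T_x^*$), I obtain
\[
	\left.\frac{\mathrm d^2}{\mathrm dt^2} f(\gamma(t))\right|_{t=0}
	= \left(\left.\frac{\mathrm d}{\mathrm dt}\right|_{t=0}\tau_{\gamma,t}^* Df_{\gamma(t)}\right)(u)
	= (\nabla_u Df)(u),
\]
where the last equality is exactly the definition \eqref{eqn:nabla_uG} of the covariant derivative of the $(0,1)$-tensor field $Df$ along $\gamma$. Since $Hf(X,Y) = (\nabla_X Df)(Y)$, this reads $Hf_x(u,u)$, completing the argument.

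I do not expect a genuine obstacle here: the statement is an unwinding of definitions. The two points requiring a line of care are (i) that the correction term $Df(\nabla_{\dot\gamma}\dot\gamma)$ which would be present for a non-geodesic curve is absent precisely because $\gamma$ is a geodesic, and (ii) the elementary but worth-noting fact that both differentiations can be carried out as ordinary derivatives of a curve in the \emph{fixed} vector space $T_x^*$, so that no further covariant-derivative formalism along curves is needed. (Alternatively, one could extend $u$ to a vector field, apply $Hf(X,Y) = X(Df(Y)) - Df(\nabla_X Y)$, and again invoke the geodesic equation $\nabla_{\dot\gamma}\dot\gamma = 0$; the parallel-transport route above avoids the extension.)
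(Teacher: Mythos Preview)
Your proof is correct and is essentially the paper's own argument written out in full: the paper notes $\dot\gamma(t)=\tau_{\gamma,t}u$, hence $Df_{\gamma(t)}(\tau_{\gamma,t}u)=\tfrac{\mathrm d}{\mathrm dt}f(\gamma(t))$, and then invokes the definition \eqref{eqn:nabla_uG} of the covariant derivative of $Df$ to differentiate once more---exactly the parallel-transport route you take.
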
	
\begin{proof}
	 For $\gamma(t) = \exp_x tu$ we have $\dot \gamma(t)= \tau_{\gamma,t} u$ and
	 $Df_{\gamma(t)} (\tau_{\gamma,t} u) = Df_{\gamma(t)} (\dot \gamma (t)) = \frac{\rm d}{{\rm d}t}f(\gamma(t))$. Apply (\ref{eqn:nabla_uG}).
\end{proof}
We consider the covariant derivative of $(0,2)$-tensor field $Hf$, which provides
a $(0,3)$-tensor field $(X,Y,Z) \mapsto (\nabla_X Hf)(Y,Z)$.
Accordingly, we have a trilinear form $(u,v,w) \mapsto (\nabla_u H)(v,w)$ on each $T_x$.
In contrast to the Euclidean case, 
this tri-linear form is not necessarily symmetric, 
due to nonzero curvature; see e.g., \cite[Exercise 10.87]{Boumal23}. 
	Recall the curvature, which is a $(1,3)$-tensor field $(X,Y,Z) \mapsto R(X,Y)Z$ defined by
	\begin{equation}
	R(X,Y)Z  := \nabla_X \nabla_Y Z - \nabla_Y \nabla_X Z - \nabla_{[X,Y]} Z. 
	\end{equation}
\begin{Lem}\label{lem:asymetric}
	$
	(\nabla_X Hf) (Y,Z) = (\nabla_Y Hf)(X,Z) - (R(X,Y)Z) f.
	$ 
\end{Lem}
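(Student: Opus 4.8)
The plan is to expand both sides directly from the definitions and compare them term by term. Recall that for the $(0,2)$-tensor field $Hf$ one has
\[
(\nabla_X Hf)(Y,Z) = X\bigl(Hf(Y,Z)\bigr) - Hf(\nabla_X Y, Z) - Hf(Y, \nabla_X Z),
\]
and that $Hf(A,B) = A(Bf) - (\nabla_A B)f$ for all vector fields $A,B$, using $Df(W) = Wf$. Substituting the latter into the former, I would write $(\nabla_X Hf)(Y,Z)$ as the explicit sum of the six terms $XYZf$, $-X((\nabla_Y Z)f)$, $-(\nabla_X Y)(Zf)$, $(\nabla_{\nabla_X Y}Z)f$, $-Y((\nabla_X Z)f)$, $(\nabla_Y\nabla_X Z)f$, and likewise write $(\nabla_Y Hf)(X,Z)$ as the analogous sum with the roles of $X$ and $Y$ interchanged.

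Then I would subtract the two expansions. The ``mixed'' terms $-X((\nabla_Y Z)f)$ and $-Y((\nabla_X Z)f)$ occur in both sums and cancel. Among the survivors, $XYZf - YXZf = [X,Y](Zf)$ while $-(\nabla_X Y)(Zf) + (\nabla_Y X)(Zf) = -[X,Y](Zf)$ by torsion-freeness, so these four terms cancel as well. Next, $(\nabla_{\nabla_X Y}Z)f - (\nabla_{\nabla_Y X}Z)f = (\nabla_{[X,Y]}Z)f$, again by torsion-freeness and linearity of $\nabla$ in the subscript. The only remaining terms are $(\nabla_Y\nabla_X Z)f - (\nabla_X\nabla_Y Z)f$. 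Collecting everything yields
\[
(\nabla_X Hf)(Y,Z) - (\nabla_Y Hf)(X,Z) = -\bigl(\nabla_X\nabla_Y Z - \nabla_Y\nabla_X Z - \nabla_{[X,Y]}Z\bigr)f = -(R(X,Y)Z)f,
\]
which is the asserted identity after rearranging.

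The argument is entirely mechanical, so the only real work is the bookkeeping of the twelve terms; the key observations to highlight are that the plain iterated derivative $XYZf$ and the connection-correction term $(\nabla_X Y)(Zf)$ together contribute nothing to the antisymmetrization (here torsion-freeness enters once), and that the terms $\nabla_{\nabla_X Y}Z - \nabla_{\nabla_Y X}Z$ recombine into $\nabla_{[X,Y]}Z$ (torsion-freeness again), exactly supplying the third term of the curvature tensor. I would also note that the symmetry of $Hf$ is never used, so the identity holds for any torsion-free connection, metric or not.
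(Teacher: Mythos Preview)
Your proof is correct and follows essentially the same approach as the paper: expand $(\nabla_X Hf)(Y,Z)$ via the covariant-derivative formula for tensors, substitute the definition of $Hf$, subtract the analogous expansion with $X$ and $Y$ swapped, and regroup using torsion-freeness to recognize $-R(X,Y)Zf$. The paper's presentation is slightly terser (it writes $Df(\nabla_Y Z)$ where you write $(\nabla_Y Z)f$), but the computation is identical term for term.
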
	
\begin{proof}
	By definition, we have
	\begin{eqnarray*}
		&& (\nabla_X Hf)(Y,Z) =  X (Hf(Y,Z)) - Hf(\nabla_X Y,Z) - Hf(Y, \nabla_X Z) \\
		&& =  X (Y Zf - Df (\nabla_Y Z)) - (\nabla_X Y) Df(Z) + Df(\nabla_{\nabla_X Y}Z) \\
		&& \quad - Y Df(\nabla_X Z) + Df(\nabla_Y \nabla_X Z). 
	\end{eqnarray*}
By changing role of $X,Y$, we have the expression of $(\nabla_Y Hf)(X,Z)$, and obtain
	\begin{eqnarray*}
	&& (\nabla_X Hf)(Y,Z) - (\nabla_Y Hf)(X,Z) 	\\
	&&= ([X,Y] - \nabla_X Y + \nabla_Y 
	X)Zf  + \nabla_{\nabla_X Y - \nabla_Y X}Z f  + \nabla_Y \nabla_X Z f -  	\nabla_X \nabla_Y Z f \\
	&& = - R(X,Y)Z f. 
\end{eqnarray*}	
\end{proof}
The diagonal of $\nabla_X Hf(Y,Z)$ is
given by differentiation along geodesic; see \cite[Example 10.81]{Boumal23}.
\begin{Lem}\label{lem:nabla_uHf(u,u)}  For $x \in M$ and $u\in T_x$, we have
\[
(\nabla_u Hf)(u,u) = \left.\frac{{\rm d}^3}{{\rm d}t^3} f(\exp_x tu) \right|_{t=0}.
\]
\end{Lem}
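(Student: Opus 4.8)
The plan is to reduce the claim to Lemma~\ref{lem:Hf(u,u)} by differentiating once more along the geodesic $\gamma(t) := \exp_x tu$. First I would record the two elementary properties of $\gamma$ that drive everything. Since $\gamma$ is a geodesic, its velocity field is parallel, so $\nabla_{\dot\gamma(t)}\dot\gamma(t) = 0$. Moreover, for each $t$ the curve $s \mapsto \gamma(t+s)$ is again a geodesic, with initial point $\gamma(t)$ and initial velocity $\dot\gamma(t)$; by uniqueness of geodesics this gives $\exp_{\gamma(t)}(s\,\dot\gamma(t)) = \gamma(t+s)$.

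Next I would apply Lemma~\ref{lem:Hf(u,u)} at the point $\gamma(t)$ with tangent vector $\dot\gamma(t)$, obtaining
\[
Hf_{\gamma(t)}(\dot\gamma(t),\dot\gamma(t)) = \left.\frac{{\rm d}^2}{{\rm d}s^2} f(\exp_{\gamma(t)} s\,\dot\gamma(t))\right|_{s=0} = \left.\frac{{\rm d}^2}{{\rm d}s^2} f(\gamma(t+s))\right|_{s=0} = (f\circ\gamma)''(t).
\]
Thus the scalar function $t \mapsto Hf_{\gamma(t)}(\dot\gamma(t),\dot\gamma(t))$ is precisely $(f\circ\gamma)''$, whose derivative at $t=0$ is the right-hand side $\left.\frac{{\rm d}^3}{{\rm d}t^3} f(\exp_x tu)\right|_{t=0}$ of the asserted identity.

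It remains to identify that derivative with $(\nabla_u Hf)(u,u)$. For this I would choose any vector field $X$ on $M$ with $X_{\gamma(t)} = \dot\gamma(t)$ along the geodesic. Along $\gamma$ the covariant derivative $\nabla_{\dot\gamma(t)}X$ coincides with the covariant derivative of the restricted field $t \mapsto X_{\gamma(t)} = \dot\gamma(t)$, hence equals $\nabla_{\dot\gamma(t)}\dot\gamma(t) = 0$. Applying the product formula for the covariant derivative of the $(0,2)$-tensor field $Hf$ (the display following \eqref{eqn:nabla_uG}), together with the symmetry of $Hf$, gives along $\gamma$
\[
(\nabla_X Hf)(X,X) = X(Hf(X,X)) - 2\,Hf(\nabla_X X, X) = \frac{{\rm d}}{{\rm d}t}\bigl[Hf_{\gamma(t)}(\dot\gamma(t),\dot\gamma(t))\bigr],
\]
since $\nabla_X X = 0$ and $X(Hf(X,X))|_{\gamma(t)} = \dot\gamma(t)\bigl(Hf(X,X)\bigr) = \frac{{\rm d}}{{\rm d}t}[Hf_{\gamma(t)}(\dot\gamma(t),\dot\gamma(t))]$. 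Evaluating at $t=0$ yields $(\nabla_u Hf)(u,u) = (f\circ\gamma)'''(0)$, which is the claim. (Alternatively, one can argue directly from the parallel-transport definition \eqref{eqn:nabla_uG} and the Leibniz rule for $\frac{{\rm d}}{{\rm d}t}\tau_{\gamma,t}^* Hf_{\gamma(t)}$, without introducing $X$.)

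The only point needing a bit of care — what I would call the ``hard part,'' though it is routine — is this last passage between the covariant derivative of the Hessian \emph{tensor field} and the ordinary derivative of the \emph{scalar} function $t \mapsto Hf_{\gamma(t)}(\dot\gamma(t),\dot\gamma(t))$: one must check that the correction terms are all proportional to $\nabla_X X$ (equivalently to $\nabla_{\dot\gamma}\dot\gamma$), which vanishes precisely because $\gamma$ is a geodesic. Everything else is bookkeeping with the definitions already set up in Section~\ref{sec:pre}.
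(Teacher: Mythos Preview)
Your argument is correct and follows essentially the same route as the paper: both compute $\frac{{\rm d}}{{\rm d}t}Hf_{\gamma(t)}(\dot\gamma(t),\dot\gamma(t))$ along the geodesic $\gamma(t)=\exp_x tu$ and identify it with $(f\circ\gamma)'''(t)$ via Lemma~\ref{lem:Hf(u,u)} and with $(\nabla_u Hf)(u,u)$ via the definition of the covariant derivative of a tensor field. The only cosmetic difference is that the paper invokes the parallel-transport formula~\eqref{eqn:nabla_uG} directly (using $\dot\gamma(t)=\tau_{\gamma,t}u$), whereas you use the equivalent product-rule expression with an extending vector field $X$ --- an alternative you yourself note at the end.
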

\begin{proof}
	For $\gamma = \exp_x tu$, we have
	$\frac{\rm d}{{\rm d}t}Hf_{\gamma(t)}(\tau_{\gamma,t} u, \tau_{\gamma,t} u) =  
	\frac{\rm d}{{\rm d}t} Hf_{\gamma(t)}(\dot \gamma(t), \dot \gamma(t)) = \frac{\rm d^3}{{\rm d}t^3}f(\gamma(t))$.
\end{proof}

\section{Self-concordant functions}\label{sec:SC}
Let $M$ be a manifold with an affine connection $\nabla$ (satisfying the above conditions).
A subset $C \subseteq M$ is said to be {\em convex} (or more precisely {\em geodesically convex})
if for every geodesic $\gamma:[a,b] \to M$ with $\gamma(a), \gamma(b) \in C$
it holds $\gamma(t) \in C$ for every $t \in [a,b]$.
Accordingly, a function $f: M \to \RR \cup \{+\infty\}$
is said to {\em convex} if for every geodesic $\gamma:[a,b] \to M$
the function $f\circ \gamma:[a,b] \to \RR \cup \{+\infty\}$ is convex. 

Let $C \subseteq M$ be an open convex subset, 
which is viewed as a submanifold of $(M,\nabla)$.
\begin{Lem}
	$f:C \to \RR$ is convex if and only if the Hessian $Hf_x$
	is positive semidefinite for all $x \in C$.
\end{Lem}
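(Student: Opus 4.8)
The plan is to reduce the global statement about convexity to the one-dimensional statement along geodesics, and then to apply the two lemmas already established: Lemma~\ref{lem:Hf(u,u)}, which identifies $Hf_x(u,u)$ with $\left.\frac{{\rm d}^2}{{\rm d}t^2} f(\exp_x tu)\right|_{t=0}$, and the standard fact that a smooth one-variable function is convex iff its second derivative is nonnegative. Recall that, by definition, $f: C \to \RR$ is convex precisely when $f \circ \gamma$ is convex for every geodesic $\gamma$ whose image lies in $C$; since $C$ is open and geodesically convex, for every $x \in C$ and every $u \in T_x$ there is an $\varepsilon > 0$ such that $t \mapsto \exp_x tu$ is a geodesic in $C$ for $t \in (-\varepsilon,\varepsilon)$.

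For the ``only if'' direction, I would fix $x \in C$ and $u \in T_x$, let $\gamma(t) = \exp_x tu$, and note that convexity of $f$ forces $f \circ \gamma$ to be convex near $0$, hence $(f \circ \gamma)''(0) \geq 0$; by Lemma~\ref{lem:Hf(u,u)} this is exactly $Hf_x(u,u) \geq 0$, and since $x,u$ were arbitrary, $Hf_x \succeq 0$ for all $x \in C$. For the ``if'' direction, I would fix an arbitrary geodesic $\gamma:[a,b] \to C$ and show $(f\circ\gamma)''(t) \geq 0$ for all $t \in [a,b]$. The key point is that for a geodesic, $\dot\gamma(t) = \tau_{\gamma,t}(\dot\gamma(0))$, and the subcurve $s \mapsto \gamma(t+s)$ is again a geodesic equal to $\exp_{\gamma(t)} s\dot\gamma(t)$; applying Lemma~\ref{lem:Hf(u,u)} at the point $\gamma(t)$ with tangent vector $\dot\gamma(t)$ gives $(f\circ\gamma)''(t) = Hf_{\gamma(t)}(\dot\gamma(t),\dot\gamma(t)) \geq 0$ by the hypothesis. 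A smooth function on $[a,b]$ with nonnegative second derivative is convex, so $f\circ\gamma$ is convex; as $\gamma$ was arbitrary, $f$ is convex on $C$.

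The only mild subtlety, and the place I would be most careful, is the bookkeeping in the ``if'' direction: one must check that $s \mapsto \gamma(t+s)$ really is the geodesic emanating from $\gamma(t)$ with velocity $\dot\gamma(t)$, which follows from uniqueness of geodesics given initial point and velocity (stated in the preliminaries, as $\gamma$ solves $\nabla_{\dot\gamma}\dot\gamma = 0$), and that this curve stays in the open set $C$ for $|s|$ small — immediate since $\gamma([a,b]) \subseteq C$ and $C$ is open. Everything else is the elementary one-variable characterization of convexity, so there is no genuine obstacle; the content of the lemma is entirely carried by Lemma~\ref{lem:Hf(u,u)} together with the observation that restrictions of geodesics to subintervals are geodesics.
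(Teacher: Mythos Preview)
Your proposal is correct and follows exactly the same approach as the paper's proof, which is a one-line reduction via Lemma~\ref{lem:Hf(u,u)} to the equivalence ``$f$ convex $\Leftrightarrow (f\circ\gamma)''(t)\geq 0$ for every geodesic $\gamma$.'' You have simply unpacked both directions of that equivalence carefully, including the observation that $s\mapsto\gamma(t+s)$ is the geodesic $\exp_{\gamma(t)} s\dot\gamma(t)$, which the paper leaves implicit.
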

\begin{proof}
	This follows from Lemma~\ref{lem:Hf(u,u)} and:
	$f$ is convex  $\Leftrightarrow$ $(f \circ \gamma)''(t) \geq 0$ 
	for every geodesic $\gamma$.
\end{proof}
We say that a convex function $f:C \to \RR$ is {\em nondegenerate} if $Hf_x$ is positive definite
for all $x \in C$, and {\em closed} (or lower semicontinuous) if its epigraph $\{(x,y) \in C \times \RR \mid y \geq f(x)\}$
is a closed set.
We often omit the subscript $x$ of tensor $G_x$ at $x \in M$ if $x$ is clear.
\subsection{Definitions and properties}
Let $C \subseteq M$ be an open convex set. 
A closed and nondegenerate convex function 
$f:C \to \RR$ is called {\em $\sigma$-self-concordant ($\sigma$-SC)} for $\sigma \geq 0$
if it satisfies
\begin{equation}\label{eqn:SC}
|(\nabla_v Hf)(u,u)| \leq 2 \sigma Hf(v,v)^{1/2} Hf(u,u) \quad (u,v \in T_x, x\in C),
\end{equation}
and is called {\em weakly $\sigma$-self-concordant ($\sigma$-WSC)}
if it satisfies
\begin{equation}\label{eqn:WSC}
|(\nabla_u Hf)(u,u)| \leq 2 \sigma Hf(u,u)^{3/2} \quad (u \in T_x, x \in C).
\end{equation}
By Lemmas~\ref{lem:Hf(u,u)} and \ref{lem:nabla_uHf(u,u)}, 
the latter condition (\ref{eqn:WSC}) is equivalent to:
\begin{equation}\label{eqn:WSC_d}
\left| (f \circ \gamma)''' (t) \right| \leq 2 \sigma 
(f \circ \gamma)'' (t)^{3/2} 
\quad (\gamma: \mbox{geodesic in $C$}).
\end{equation}
The condition (\ref{eqn:WSC_d}) 
is a natural  analogue of the original definition 
of the self-concordance by Nesterov and Nemirovskii~\cite{NN94}.
So papers~\cite{JMM08, JMJ07, Udriste97} consider (\ref{eqn:WSC}) 
as the definition of the self-concordance on manifolds; see also the footnote of \cite[p. 31]{BFGOWW}.

In Euclidean space $M = \RR^n$, 
the self-concordance and weak self-concordance are the same. 
Indeed, $(X,Y,Z) \mapsto \nabla_X Hf(Y,Z)$ is symmetric (by $R= 0$ and Lemma~\ref{lem:asymetric}).
Then, by Nesterov-Nemirovski lemma~\cite[Appendix 1]{NN94} 
for symmetric multilinear forms, 
(\ref{eqn:WSC}) implies (\ref{eqn:SC}).
However,
for a manifold with nontrivial curvature $R$,
the trilinear form $\nabla_X Hf(Y,Z)$ is not necessarily symmetric, 
and the Nesterov-Nemirovski lemma is not applicable.
We see in Section~\ref{sec:distance} that 
SC and WSC are actually different.
Note that $\nabla_X Hf(Y,Z)$ is symmetric on $Y,Z$, 
and hence by the lemma the condition (\ref{eqn:SC}) 
is equivalent to $|\nabla_{u} Hf(v,w)| \leq 2\sigma Hf(u,u)^{1/2} Hf(v,v)^{1/2} Hf(w,w)^{1/2}$.

The behavior of a weighted sum of (W)SC-functions is the same as 
in the Euclidean case \cite[Theorem 4.1.1]{Nesterov04}(\cite[Theorem 5.1.1]{Nesterov18}), 
as follows. 
\begin{Lem}\label{lem:sum}
	For $i=1,2$, let $f_i: C_i \to \RR$ be $\sigma_i$-(W)SC. Suppose that $C_1 \cap C_2$ has nonempty interior.
	For $\alpha, \beta > 0$, the function $\alpha f_1 + \beta f_2: C_1 \cap C_2 \to \RR$ is
	$\sigma$-(W)SC for $\sigma := \max \{ \sigma_1/\sqrt{\alpha}, \sigma_2/\sqrt{\beta} \}$
\end{Lem}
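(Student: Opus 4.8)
The plan is to reduce the whole statement to the $\RR$-linearity of the Hessian and of $\nabla$ in the function $f$, together with two elementary scalar inequalities, exactly mirroring \cite[Theorem 5.1.1]{Nesterov18}. First I would record the algebraic facts. Since $D$ is $\RR$-linear in $f$ and the pairing is bilinear, the formula $Hf(X,Y) = X(Df(Y)) - Df(\nabla_X Y)$ gives $H(\alpha f_1+\beta f_2) = \alpha\, Hf_1 + \beta\, Hf_2$; and since the covariant derivative of a $(0,k)$-tensor is $\RR$-linear in the tensor (the pullback $\tau_{\gamma,t}^*$ in (\ref{eqn:nabla_uG}) is linear), $\nabla_Z H(\alpha f_1+\beta f_2) = \alpha\,\nabla_Z Hf_1 + \beta\,\nabla_Z Hf_2$. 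I would also dispatch the structural hypotheses: $C_1\cap C_2$ is open and geodesically convex (endpoints of a geodesic lying in $C_1\cap C_2$ force the whole geodesic into each $C_i$), and nonempty by assumption, so it is a legitimate domain; $\alpha\, Hf_1 + \beta\, Hf_2$ is positive definite, being a sum of a positive definite and a positive semidefinite form, so the combination is nondegenerate; and it is closed because a nonnegative combination of closed convex functions (extended by $+\infty$ off their domains) is closed.

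For the WSC case, fix $x\in C_1\cap C_2$, $u\in T_x$, and abbreviate $h_i := Hf_i(u,u)\ge 0$. The triangle inequality and (\ref{eqn:WSC}) for each $f_i$ give
\[
\bigl|(\nabla_u H(\alpha f_1+\beta f_2))(u,u)\bigr| \le 2\alpha\sigma_1 h_1^{3/2} + 2\beta\sigma_2 h_2^{3/2} = 2\left(\frac{\sigma_1}{\sqrt\alpha}(\alpha h_1)^{3/2} + \frac{\sigma_2}{\sqrt\beta}(\beta h_2)^{3/2}\right),
\]
which is at most $2\sigma\bigl((\alpha h_1)^{3/2} + (\beta h_2)^{3/2}\bigr) \le 2\sigma(\alpha h_1+\beta h_2)^{3/2}$, using $a^{3/2}+b^{3/2}\le(a+b)^{3/2}$ for $a,b\ge 0$ (write $(a+b)^{3/2} = a\sqrt{a+b} + b\sqrt{a+b}\ge a\sqrt a + b\sqrt b$). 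Since $\alpha h_1+\beta h_2 = H(\alpha f_1+\beta f_2)(u,u)$, this is (\ref{eqn:WSC}) with parameter $\sigma$.

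For the SC case the argument is parallel. With $v,u\in T_x$ put $p_i := Hf_i(v,v)\ge 0$, $q_i := Hf_i(u,u)\ge 0$; the triangle inequality and (\ref{eqn:SC}) for each $f_i$ bound $|(\nabla_v H(\alpha f_1+\beta f_2))(u,u)|$ by $2\bigl(\tfrac{\sigma_1}{\sqrt\alpha}(\alpha p_1)^{1/2}(\alpha q_1) + \tfrac{\sigma_2}{\sqrt\beta}(\beta p_2)^{1/2}(\beta q_2)\bigr) \le 2\sigma\bigl(\sqrt{\alpha p_1}\,\alpha q_1 + \sqrt{\beta p_2}\,\beta q_2\bigr)$, and I would finish with the Cauchy--Schwarz-type inequality $\sqrt{P_1}\,Q_1 + \sqrt{P_2}\,Q_2 \le \sqrt{P_1+P_2}\,(Q_1+Q_2)$ for $P_i,Q_i\ge 0$ (apply Cauchy--Schwarz to get $\sum_i\sqrt{P_i}\,Q_i \le (\sum_i P_iQ_i)^{1/2}(\sum_i Q_i)^{1/2}$, then use $\sum_i P_iQ_i \le (\sum_i P_i)(\sum_i Q_i)$), taken with $P_i=\alpha_i p_i$, $Q_i=\alpha_i q_i$, $\alpha_1=\alpha$, $\alpha_2=\beta$. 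Because $\sum_i\alpha_i p_i = H(\alpha f_1+\beta f_2)(v,v)$ and $\sum_i\alpha_i q_i = H(\alpha f_1+\beta f_2)(u,u)$, this yields (\ref{eqn:SC}) with parameter $\sigma$.

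I expect no real obstacle: the proof is essentially the Euclidean one of Nesterov transplanted verbatim, since (W)SC was defined so that only the diagonal behaviour of $Hf$ and $\nabla Hf$ enters and the curvature term of Lemma~\ref{lem:asymetric} never appears. The only points demanding care are the bookkeeping of the rescaling — which is exactly what forces $\sigma_i/\sqrt{\alpha_i}$, and hence $\sigma=\max\{\sigma_1/\sqrt\alpha,\sigma_2/\sqrt\beta\}$, to be the right quantity — and the closedness claim, which is the one step leaning on convex-analysis folklore rather than a one-line computation.
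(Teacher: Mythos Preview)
Your proof is correct. For the WSC case it coincides with the paper's (the paper says this case is identical to the Euclidean argument and omits it). For the SC case your route genuinely differs from the paper's: the paper bounds the ratio $|\nabla_v Hf(u,u)|/\bigl(2\,Hf(v,v)^{1/2}Hf(u,u)\bigr)$ by setting up the constrained maximization
\[
\max\ \sqrt{\alpha}\sigma_1\omega_1 x_1+\sqrt{\beta}\sigma_2\omega_2 x_2\quad\text{s.t.}\quad x_1^2+x_2^2=1,\ \alpha\omega_1+\beta\omega_2=1,\ x_i,\omega_i\ge 0,
\]
solves first over the circle in $(x_1,x_2)$, then over the segment in $(\omega_1,\omega_2)$, and reads off the value $\max\{\sigma_1/\sqrt\alpha,\sigma_2/\sqrt\beta\}$. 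You bypass the optimization entirely with the one-line inequality $\sqrt{P_1}\,Q_1+\sqrt{P_2}\,Q_2\le\sqrt{P_1+P_2}\,(Q_1+Q_2)$, which is immediate from $\sqrt{P_i}\le\sqrt{P_1+P_2}$ (your Cauchy--Schwarz justification is correct but unnecessary). Both arguments yield the same sharp constant; yours is shorter and more elementary, while the paper's optimization template is the one that later gets reused, with more substance, in the proof of Proposition~\ref{prop:F_beta}.
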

For the WSC-case, the proof is also exactly the same as in the Euclidean case, and is omitted. 
For the SC-case, it needs a little modification. 
\begin{proof}
	Let $f := \alpha f_1 + \beta f_2$.
	For $x \in C_1 \cap C_2$ and $u,v \in T_x$, we have
	\begin{eqnarray}
		\frac{|\nabla_v Hf(u,u)|}{2Hf(u,u)^{1/2}Hf(v,v)} &\leq& \frac{\alpha |\nabla_v Hf_1(u,u)|+ \beta |\nabla_v Hf_2(u,u)|}{2 \sqrt{\alpha Hf_1(v,v)+ \beta Hf_2(v,v)}(\alpha Hf_1(u,u)+ \beta Hf_2(u,u))} \nonumber \\
		& \leq& \frac{\sqrt{\alpha} \sigma_1 x_1 \omega_1 + \sqrt{\beta} \sigma_2  x_2 \omega_2}
		{\sqrt{x_1^2+ x_2^2}(\alpha \omega_1+ \beta \omega_2)}, \label{eqn:quantity}
	\end{eqnarray}
where we let  $x_i := \sqrt{\alpha} Hf_i(v,v)^{1/2}$ and $\omega_i := Hf_i(u,u)$ for $i=1,2$.
We bound the maximum of this quantity (\ref{eqn:quantity}).
Observing invariance under the change $(x_1,x_2, \omega_1,\omega_2) \to (sx_1,sx_2,t \omega_1,t\omega_2)$ 
for $s,t > 0$, we may consider 
the following optimization problem:
\begin{eqnarray*}
	\mbox{Max.} && \sqrt{\alpha} \sigma_1 \omega_1 x_1 + \sqrt{\beta} \sigma_2 \omega_2 x_2 \\
	\mbox{s.t.} && x_1^2 + x_2^2 = 1, \alpha \omega_1 + \beta \omega_2 = 1, \\ 
	&&  x_1,x_2, \omega_1,\omega_2 \geq 0.
\end{eqnarray*}
Under fixing $\omega_i$, maximize this by $x_i$. 
This is a linear optimization over unit circle, 
and hence the optimum is given by $(x_1,x_2) = (\sqrt{\alpha}\sigma_1 \omega_1, \sqrt{\beta}\sigma_2 \omega_2)/ \sqrt{\alpha \sigma_1^2 \omega_1^2 + \beta \sigma_2^2 \omega_2^2}$.
The problem reduces to
	\[
	\mbox{Max.}  \sqrt{\alpha \sigma_1^2 \omega_1^2 
			+ \beta \sigma^2_2 \omega^2_2} \ \ 
		\mbox{s.t.} \ \ \alpha \omega_1 + \beta \omega_2 = 1, \omega_1,\omega_2 \geq 0.
	\]
	The maximum is attained by $(1/\alpha,0)$ or $(0, 1/\beta)$, and
	(\ref{eqn:quantity}) $\leq \max (\sigma_1/\sqrt{\alpha}, \sigma_2/\sqrt{\beta})$, as required.
\end{proof}

Let $f: C \to \RR$ be WSC. 
The Hessian $Hf_x$ of $f$ gives an inner product $\langle, \rangle_{f,x}$ on each $T_x$:
\begin{equation}
	\langle u, v \rangle_{f,x} := Hf_x(u,v) \quad (u,v \in T_x).
\end{equation}
Accordingly, we define the {\em local norm} by
$\|u\|_{f,x} := \sqrt{\langle u,u\rangle_{f,x}}$.
The manifold $C$ is viewed as a Riemannian manifold with respect to 
this inner product $\langle, \rangle_{f}$, 
although $\nabla$ is not necessarily the Levi-Civita connection of 
this Riemannian structure. 

\begin{Rem}\label{rem:information_geometry}
	This setting may be discussed from viewpoints 
	of information geometry and Hessian geometry~\cite{AN00,Shima07}.
We can consider the {\em dual connection} $\nabla^*$ 
via $\langle Y,\nabla^*_X Z \rangle_f 
:= X \langle Y,Z\rangle_f - \langle Y,\nabla_X Z \rangle_f$.
The symmetry of $(0,3)$-tensor $\nabla_X Hf(Y,Z)$ corresponds 
to the Codazzi equation for metric $\langle,\rangle_f$, 
and to $\nabla^*$ being torsion-free.
If $R = 0$ ($M$ is flat), 
then $(M, \langle,\rangle_f, \nabla,\nabla^*)$ is a dually-flat statistical manifold---
a well-studied and central concept in information geometry. 
The main focus in this paper is the case of $R \neq 0$. 
In the view of Lemma~\ref{lem:asymetric}, 
$\nabla^*$ is hardly torsion-free.
Information geometry methods for such situations   
seem less developed.
\end{Rem}

For $x \in C$ and $r \geq 0$, the {\em Dikin ellipsoid} $W^0(x;r)$ 
is defined by
\[
W^0(x;r) := \{ \gamma(1) \mid \mbox{$\exists$ geodesic }\gamma:[0,1] \to C, \gamma(0)=x, \|\dot\gamma(0)\|_{f,x} < r \},
\]
which is the image of the open ball of radius $r$ in $T_x$ by $\exp_x$.
In this setting, 
some of basic properties and arguments of (W)SC in the Euclidean case 
can be extended in a straightforward manner.
The following three propositions and their proofs 
are direct adaptations of those in \cite{Nesterov04,Nesterov18};
they are basically replacements of type:
\begin{eqnarray*}
&&	\langle \nabla f(x),h \rangle  =  \left. \frac{\rm d}{{\rm d}t} f(x+th) \right|_{t = 0} 
	 \quad \Longrightarrow \quad   
	Df_{\gamma(0)}(\dot \gamma(0)) = \left. \frac{\rm d}{{\rm d}t} f(\gamma(t)) \right|_{t = 0}, \\
&&  \langle \nabla f(x+h) - \nabla f(x), h \rangle = \int_{0}^1 \langle \nabla^{2} f(x+   th)h,h \rangle {\rm d}t \\
&& \quad \Longrightarrow \quad Df_{\gamma(1)}(\dot \gamma(1)) - Df_{\gamma(0)}(\dot \gamma(0)) = \int_{0}^1 Hf_{\gamma(t)}(\dot\gamma(t),\dot\gamma(t)) {\rm d}t.
\end{eqnarray*}
where $\gamma:[0,1] \to M$ is a geodesic.
The first one corresponds to \cite[Theorem 4.1.5]{Nesterov04} (\cite[Theorem 5.1.5]{Nesterov18}). 

\begin{Prop}\label{prop:WSC1} Suppose that $f$ is $\sigma$-WSC.
	\begin{itemize}
	\item [(1)] For any $x \in C$, it holds
	$
	W^0(x; 1/\sigma) \subseteq C.
	$  
\item[(2)] For a geodesic $\gamma: [0,1] \to C$,   
	it holds
	\begin{equation}\label{eqn:WSC_gamma_1}
	\|\dot\gamma(1)\|_{f,\gamma(1)} \geq \frac{\|\dot\gamma(0)\|_{f,\gamma(0)}}{1+ \sigma \|\dot\gamma(0)\|_{f,\gamma(0)}}.
	\end{equation}

	In addition, if $\gamma(1) \in W^0(\gamma(0);1/\sigma)$, then 
	\begin{equation}\label{eqn:WSC_gamma_2}
	 \|\dot\gamma(1)\|_{f,\gamma(1)} \leq \frac{\|\dot\gamma(0)\|_{f,\gamma(0)}}{1- \sigma \|\dot\gamma(0)\|_{f,\gamma(0)}}.
	\end{equation}
	\end{itemize}
\end{Prop}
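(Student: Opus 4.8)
The plan is to reduce both parts to a single one‑variable estimate along geodesics, following the Euclidean proof of \cite[Theorem~4.1.5]{Nesterov04}. Fix a geodesic $\gamma:[0,1]\to C$ and set $g:=f\circ\gamma$. Because $\gamma$ is a geodesic, Lemma~\ref{lem:Hf(u,u)} applied at $\gamma(t)$ to the vector $\dot\gamma(t)$ gives $g''(t)=Hf_{\gamma(t)}(\dot\gamma(t),\dot\gamma(t))=\|\dot\gamma(t)\|_{f,\gamma(t)}^2$, and Lemma~\ref{lem:nabla_uHf(u,u)} gives $g'''(t)=(\nabla_{\dot\gamma(t)}Hf)(\dot\gamma(t),\dot\gamma(t))$, so the $\sigma$-WSC condition is exactly $|g'''(t)|\le 2\sigma\,g''(t)^{3/2}$, i.e.\ (\ref{eqn:WSC_d}). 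Writing $\varphi:=g''$, which is positive by nondegeneracy of $Hf$ (the case $\dot\gamma(0)=0$ being trivial), the function $\varphi^{-1/2}$ is smooth and
\[
\left|\frac{\rm d}{{\rm d}t}\,\varphi(t)^{-1/2}\right|=\frac{|g'''(t)|}{2\,\varphi(t)^{3/2}}\le\sigma .
\]
This inequality is the engine of the whole proposition; the rest is bookkeeping.

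For part~(2), integrate the last display over $[0,1]$ to get $\bigl|\varphi(1)^{-1/2}-\varphi(0)^{-1/2}\bigr|\le\sigma$. With $a:=\|\dot\gamma(0)\|_{f,\gamma(0)}=\varphi(0)^{1/2}$ and $b:=\|\dot\gamma(1)\|_{f,\gamma(1)}=\varphi(1)^{1/2}$ this reads $|1/b-1/a|\le\sigma$. The estimate $1/b\le 1/a+\sigma$ rearranges to (\ref{eqn:WSC_gamma_1}) and holds for every geodesic in $C$; the estimate $1/b\ge 1/a-\sigma$ rearranges to (\ref{eqn:WSC_gamma_2}) exactly when $1/a-\sigma>0$, i.e.\ $a<1/\sigma$, which is what the hypothesis $\gamma(1)\in W^0(\gamma(0);1/\sigma)$ encodes.

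For part~(1), take a geodesic $\gamma$ with $\gamma(0)=x$ and $a:=\|\dot\gamma(0)\|_{f,x}<1/\sigma$; by the completeness assumption on $(M,\nabla)$ it is defined on all of $[0,1]$, and we must show $\gamma([0,1])\subseteq C$. Suppose not, and put $t^*:=\inf\{t\in[0,1]:\gamma(t)\notin C\}$; since $C$ is open and $\gamma$ is continuous we have $\gamma([0,t^*))\subseteq C$ and $\gamma(t^*)\in\partial C$. For $s\in[0,t^*)$, integrating the displayed inequality over $[0,s]$ gives $\varphi(s)^{-1/2}\ge a^{-1}-\sigma s\ge a^{-1}-\sigma>0$ (using $\sigma a<1$), hence $\varphi(s)\le a^2/(1-\sigma a)^2$. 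Thus $g''=\varphi$, and then $g'$ (since $g'(0)=Df_x(\dot\gamma(0))$ is finite) and $g=f\circ\gamma$, are bounded on $[0,t^*)$, say $f(\gamma(t))\le B$ there. Then the points $(\gamma(t),B)$ of the epigraph of $f$ converge to $(\gamma(t^*),B)$, which cannot lie in the epigraph because $\gamma(t^*)\notin C$, contradicting closedness of $f$. Hence $\gamma([0,1])\subseteq C$, and since $W^0(x;1/\sigma)$ is by definition the set of such endpoints $\gamma(1)$, we conclude $W^0(x;1/\sigma)\subseteq C$.

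I expect the main obstacle to be precisely this boundary argument in part~(1): everything else is a mechanical consequence of the one‑dimensional bound $|(\varphi^{-1/2})'|\le\sigma$, but ruling out that the geodesic escapes $C$ before time~$1$ genuinely requires closedness (lower semicontinuity) of $f$, a hypothesis that plays no role in part~(2). A smaller point to settle is the exact reading of ``$\gamma(1)\in W^0(\gamma(0);1/\sigma)$'' in (\ref{eqn:WSC_gamma_2}): it must be read as ``$\gamma$ itself satisfies $\|\dot\gamma(0)\|_{f,\gamma(0)}<1/\sigma$'' (which, by part~(1), also forces $\gamma([0,1])\subseteq C$), and this reading is automatic whenever geodesics joining two points are unique, as in the settings of interest here.
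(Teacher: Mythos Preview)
Your proof is correct and follows essentially the same approach as the paper: both hinge on the one-variable bound $|(\varphi^{-1/2})'|\le\sigma$ for $\varphi=(f\circ\gamma)''$ and then integrate. Your boundary argument for~(1) via boundedness of $g''$, $g'$, $g$ and the closed epigraph is just the contrapositive of the paper's claim that $f(\gamma(t))\to\infty$ forces $\phi(t)=(f\circ\gamma)''(t)^{-1/2}\to 0$, and is arguably the more transparent of the two presentations.
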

\begin{proof} (1).
	Consider a geodesic $\gamma:[0,1] \to M$ with $\gamma(0) = x$.
	Suppose that for $a \in (0,1]$ the point $\gamma(a)$ is in the boundary of $C$. 
	Define a univariate function $\phi:[0,a) \to \RR$ by 
	\begin{equation}
		\phi(t) := 1/ \|\dot \gamma(t)\|_{f,\gamma(t)} = Hf(\dot \gamma(t),\dot \gamma(t))^{-1/2}= (f\circ\gamma)''(t)^{-1/2}.
	\end{equation}
	Then we have
	\[
	\left|\phi'(t)\right| = \frac{\left| (f\circ\gamma)'''(t)\right|}{2 (f \circ \gamma)''(t)^{3/2}} \leq \sigma.
	\]
    By integrating $-\sigma \leq \phi'(t) \leq \sigma$ from $0$ to $a$, 
    we have 
    \begin{equation}\label{eqn:phi}
    	\phi(0) - \sigma a \leq \lim_{t \to a} \phi(t) \leq \phi(0) + \sigma a
    \end{equation}
	Since $f$ is lower semicontinuous and $C$ is open, it holds $f(\gamma(t)) \to \infty$ for $t \to a$. Then $\lim_{t\to a} \phi(t) = 0$. 
	Hence it must hold $a \geq \phi(0)/\sigma$.
   	If $\phi(0)/\sigma > 1 \Leftrightarrow \|\dot \gamma(0)\|_{f,x} < 1/\sigma$, 
   	then $\gamma(1)$ belongs to the domain $C$ of $f$. This implies (1).
	
	(2). From (\ref{eqn:phi}) and $\gamma(1) \in C$, we have $\phi(1) \leq \phi(0) + \sigma$, 
	which is equivalent to $1/\|\dot\gamma(1)\|_{f,x} \leq 1/\|\dot\gamma(0)\|_{f,x}+\sigma$.
	This is (\ref{eqn:WSC_gamma_1}).
	The second equation (\ref{eqn:WSC_gamma_2}) follows similarly from $\phi(1) \geq \phi(0) - \sigma$.
\end{proof}

The next corresponds to \cite[Theorems 4.1.7 and 4.1.8]{Nesterov04} (\cite[Theorems 5.1.8 and 5.1.9]{Nesterov18}).

\begin{Prop}\label{prop:WSC2} Suppose that $f$ is $\sigma$-WSC.
	For a geodesic $\gamma:[0,1] \to C$,  it holds 
	\begin{eqnarray}
		&& Df_{\gamma(1)}(\dot{\gamma}(1)) - Df_{\gamma(0)}(\dot{\gamma}(0)) 
		\geq \frac{\|\dot\gamma(0)\|^2_{f,\gamma(0)}}{1+ \sigma \|\dot\gamma(0)\|_{f,\gamma(0)}}, 
		\label{eqn:WSC2_Df=>}\\
		&& f(\gamma(1)) \geq f(\gamma(0)) + Df_{\gamma(0)}(\dot{\gamma}(0)) + \frac{1}{\sigma^2} \omega(\sigma \|\dot\gamma(0)\|_{f,\gamma(0)}),
	\end{eqnarray}
	where $\omega(t) := t - \log (1+t)$.
	In addition, if $\gamma(1) \in W^{\circ}(\gamma(0);1/\sigma)$, then it holds
		\begin{eqnarray}
		&& Df_{\gamma(1)}(\dot{\gamma}(1)) - Df_{\gamma(0)}(\dot{\gamma}(0)) 
		\leq \frac{\|\dot\gamma(0)\|^2_{f,\gamma(0)}}{1- \sigma \|\dot\gamma(0)\|_{f,\gamma(0)}}, \label{eqn:WSC2_Df}\\
		&& f(\gamma(1)) \leq f(\gamma(0)) + Df_{\gamma(0)}(\dot{\gamma}(0)) + \frac{1}{\sigma^2} \omega_{*}(\sigma \|\dot\gamma(0)\|_{f,\gamma(0)}), \label{eqn:WSC2_f}
	\end{eqnarray}
	where  $\omega_*(t) := - t - \log (1-t)$.
\end{Prop}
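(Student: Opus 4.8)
The plan is to reduce the proposition, just as \cite[Theorems~4.1.7 and~4.1.8]{Nesterov04} are proved, to elementary one-variable calculus along $\gamma$. Put $r := \|\dot\gamma(0)\|_{f,\gamma(0)}$; the case $r = 0$ is trivial, so assume $r > 0$. Following the proof of Proposition~\ref{prop:WSC1}, I would work with
\[
\phi(t) := \frac{1}{\|\dot\gamma(t)\|_{f,\gamma(t)}} = (f\circ\gamma)''(t)^{-1/2},
\]
which is smooth and positive on all of $[0,1]$, since $\gamma$ is a geodesic in the convex set $C$ and $f$ is nondegenerate. Weak self-concordance in the form (\ref{eqn:WSC_d}) gives $|\phi'(t)| \leq \sigma$, and integrating from $0$ yields $\phi(0) - \sigma t \leq \phi(t) \leq \phi(0) + \sigma t$ on $[0,1]$. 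With $\phi(0) = 1/r$, the upper bound on $\phi$ always gives a lower bound on $\|\dot\gamma(t)\|_{f,\gamma(t)}$, whereas the lower bound on $\phi$ gives an upper bound on $\|\dot\gamma(t)\|_{f,\gamma(t)}$ only while it stays positive --- that is, under the extra hypothesis $\gamma(1) \in W^0(\gamma(0);1/\sigma)$, which I read as $\sigma r < 1$.

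Next I would set $\psi(t) := (f\circ\gamma)'(t) = Df_{\gamma(t)}(\dot\gamma(t))$, so that $\psi'(t) = (f\circ\gamma)''(t) = Hf_{\gamma(t)}(\dot\gamma(t),\dot\gamma(t)) = \phi(t)^{-2}$. Squaring the two bounds on $\phi$ gives
\[
\frac{r^2}{(1+\sigma t r)^2} \leq \psi'(t) \leq \frac{r^2}{(1-\sigma t r)^2} ,
\]
the right inequality under the condition $\sigma r < 1$. Integrating over $[0,1]$ and using $\int_0^1 r^2(1 \pm \sigma t r)^{-2}\,dt = r^2/(1 \pm \sigma r)$, together with $\psi(1) - \psi(0) = Df_{\gamma(1)}(\dot\gamma(1)) - Df_{\gamma(0)}(\dot\gamma(0))$, yields (\ref{eqn:WSC2_Df=>}) and (\ref{eqn:WSC2_Df}). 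For the two inequalities bounding $f(\gamma(1))$ I would integrate once more: integrating the displayed estimates on $\psi'$ from $0$ to $\tau$ gives $\tau r^2/(1+\sigma\tau r) \leq \psi(\tau) - \psi(0) \leq \tau r^2/(1-\sigma\tau r)$, and since $f(\gamma(1)) - f(\gamma(0)) = \int_0^1 \psi(\tau)\,d\tau$ with $\psi(0) = Df_{\gamma(0)}(\dot\gamma(0))$, it only remains to compute $\int_0^1 \tau r^2 (1 \pm \sigma\tau r)^{-1}\,d\tau$. The substitution $u = \sigma\tau r$ turns this into $\sigma^{-2}\int_0^{\sigma r} u(1 \pm u)^{-1}\,du$, which equals $\sigma^{-2}\omega(\sigma r)$ with the $+$ sign and $\sigma^{-2}\omega_*(\sigma r)$ with the $-$ sign; this gives the remaining two inequalities.

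I do not expect a genuine obstacle here: this is a faithful transcription of the Euclidean argument, and the only points to watch are bookkeeping ones --- namely that $\gamma$ lies entirely in the open convex set $C$, so $\phi$, $\psi$, and the integrals are all defined on the closed interval $[0,1]$, and that the ``in addition'' statements genuinely need $\sigma r < 1$ (encoded by $\gamma(1) \in W^0(\gamma(0);1/\sigma)$) in order for $1 - \sigma\tau r$ to stay positive on $[0,1]$ and the upper estimates to be meaningful.
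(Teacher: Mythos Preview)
Your proposal is correct and follows essentially the same route as the paper's proof. The only cosmetic difference is that the paper obtains the pointwise bounds on $\|\dot\gamma(t)\|_{f,\gamma(t)}$ by citing Proposition~\ref{prop:WSC1}(2) applied to the reparametrized geodesic $s\mapsto\gamma(ts)$, whereas you re-derive those same bounds inline via $\phi(t)$; after that, both arguments integrate twice exactly as you describe.
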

\begin{proof}
	We only show the latter part; the former is similar.
	\begin{eqnarray*}
		&& Df_{\gamma(1)}(\dot{\gamma}(1)) - Df_{\gamma(0)}(\dot{\gamma}(0)) = 
		\int_0^1 \frac{\rm d}{{\rm d}t} Df_{\gamma(t)}(\dot{\gamma}(t)) {\rm d}t = \int_0^1 Hf_{\gamma(t)}(\dot{\gamma}(t),\dot{\gamma}(t)) {\rm d}t \\
		&& = \int_0^1 \| \dot{\gamma}(t) \|^2_{f,\gamma(t)} {\rm d}t  \leq \int_0^1 \frac{\|\dot\gamma(0)\|^2_{f,\gamma(0)}}{(1 - \sigma t \|\dot\gamma(0)\|_{f,\gamma(0)})^2} {\rm d}t, 
	\end{eqnarray*}
where the inequality follows by applying (\ref{eqn:WSC_gamma_2}) for geodesic $s \mapsto \gamma(ts)$, and the last integration equals the RHS of (\ref{eqn:WSC2_Df}).
By using (\ref{eqn:WSC_gamma_2}) for geodesic $s \mapsto \gamma(ts)$, we have
\begin{eqnarray*}
	 && f(\gamma(1)) - f(\gamma(0)) - Df_{\gamma(0)}(\dot{\gamma}(0)) = 
	 \int_{0}^{1} \frac{\rm d}{{\rm d}t} f(\gamma(t)) {\rm d}t  - Df_{\gamma(0)}(\dot{\gamma}(0)) \\
	 && = \int_{0}^1 Df_{\gamma(t)}(\gamma(t)) - Df_{\gamma(0)}(\dot{\gamma}(0)) {\rm d}t
	 \leq \int_{0}^1 \frac{t \|\dot\gamma(0)\|^2_{f,\gamma(0)}}{1- \sigma t\|\dot\gamma(0)\|_{f,\gamma(0)}} {\rm d}t, 
\end{eqnarray*}
where the last integration equals the third term of RHS of (\ref{eqn:WSC2_f}).
\end{proof}

We next consider properties of SC that cannot be derived from WSC.
The following corresponds to 
\cite[Theorem 4.1.6 and Corollary 4.1.4]{Nesterov04} (\cite[Theorem 5.1.7 an Corollary 5.1.5]{Nesterov18}).
\begin{Prop}\label{prop:SC}
	Suppose that $f$ is $\sigma$-SC.
	For a geodesic $\gamma:[0,1] \to C$ with $\gamma(1) \in W^{0}(\gamma(0);1/\sigma)$, it hold
	\begin{equation}\label{eqn:SC_Hf}
	(1- \sigma \|\dot \gamma(0)\|_{f,\gamma(0)})^2 Hf_{\gamma(0)} \preceq \tau_{\gamma,1}^* Hf_{\gamma(1)}	\preceq 
	\frac{1}{(1- \sigma \|\dot\gamma(0)\|_{f,\gamma(0)})^2}
	 Hf_{\gamma(0)},
	\end{equation}
	and 
	\begin{eqnarray} 
		&& \left\{ 
		1- \sigma \|\dot\gamma(0)\|_{f,\gamma(0)} + \frac{1}{3}\sigma^2 \|\dot\gamma(0)\|_{f,\gamma(0)}^2 \right\} Hf_{\gamma(0)}  \nonumber \\
		&& \quad \quad \preceq
		\int_{0}^1 \tau_{\gamma,t}^* Hf_{\gamma(t)} {\rm d}t  
		\preceq \frac{1}{1- \sigma \|\dot\gamma(0)\|_{f,\gamma(0)}} Hf_{\gamma(0)}.  \label{eqn:SC_Hf_integral}
	\end{eqnarray}
\end{Prop}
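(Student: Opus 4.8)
The plan is to mimic Nesterov's argument, translating each Euclidean step into its covariant analogue along the geodesic $\gamma$. Fix $v := \dot\gamma(0)$ and write $r := \|v\|_{f,\gamma(0)} = Hf_{\gamma(0)}(v,v)^{1/2} < 1/\sigma$; note $\dot\gamma(t) = \tau_{\gamma,t} v$, so $\dot\gamma(t)$ is the parallel transport of $v$. First I would study, for an arbitrary fixed $w \in T_{\gamma(0)}$, the scalar function $\psi(t) := (\tau_{\gamma,t}^* Hf_{\gamma(t)})(w,w) = Hf_{\gamma(t)}(\tau_{\gamma,t}w, \tau_{\gamma,t}w)$. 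Since parallel transport kills the extra terms in the covariant derivative, $\psi'(t) = (\nabla_{\dot\gamma(t)} Hf)(\tau_{\gamma,t}w, \tau_{\gamma,t}w)$. Applying the SC inequality~(\ref{eqn:SC}) with $u = \tau_{\gamma,t}w$ and the vector $\dot\gamma(t) = \tau_{\gamma,t}v$ gives
\begin{equation*}
|\psi'(t)| \leq 2\sigma\, Hf_{\gamma(t)}(\dot\gamma(t),\dot\gamma(t))^{1/2}\, \psi(t) = 2\sigma\, \|\dot\gamma(t)\|_{f,\gamma(t)}\, \psi(t).
\end{equation*}
By Proposition~\ref{prop:WSC1}(2) (valid since SC implies WSC), $\|\dot\gamma(t)\|_{f,\gamma(t)} \leq r/(1-\sigma t r)$ — here I must re-apply~(\ref{eqn:WSC_gamma_2}) to the reparametrised geodesic $s \mapsto \gamma(ts)$, exactly as in the proof of Proposition~\ref{prop:WSC2}. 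Hence $|(\log\psi)'(t)| \leq 2\sigma r/(1-\sigma t r)$, and integrating from $0$ to $1$ yields $|\log\psi(1) - \log\psi(0)| \leq -2\log(1-\sigma r)$, i.e. $(1-\sigma r)^2 \psi(0) \leq \psi(1) \leq (1-\sigma r)^{-2}\psi(0)$. Since $w$ was arbitrary, this is precisely~(\ref{eqn:SC_Hf}).

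For~(\ref{eqn:SC_Hf_integral}), I would integrate the just-obtained pointwise bounds in $t$. Applying the two-sided estimate of~(\ref{eqn:SC_Hf}) to the sub-geodesic $s \mapsto \gamma(ts)$ (whose initial velocity has $f$-norm $tr$) gives, for each $t \in [0,1]$,
\begin{equation*}
(1-\sigma t r)^2\, Hf_{\gamma(0)} \preceq \tau_{\gamma,t}^* Hf_{\gamma(t)} \preceq \frac{1}{(1-\sigma t r)^2}\, Hf_{\gamma(0)}.
\end{equation*}
Integrating over $t \in [0,1]$ and using $\int_0^1 (1-\sigma t r)^2\,{\rm d}t = 1 - \sigma r + \tfrac13\sigma^2 r^2$ and $\int_0^1 (1-\sigma t r)^{-2}\,{\rm d}t = 1/(1-\sigma r)$ (this is where $\sigma r < 1$ is used to keep the integrand finite) gives exactly~(\ref{eqn:SC_Hf_integral}), since the partial order $\preceq$ is preserved under integration of a continuous family of symmetric bilinear forms.

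The main subtlety — and the one point where care beyond routine calculation is needed — is the very first step: verifying that $\psi'(t) = (\nabla_{\dot\gamma(t)} Hf)(\tau_{\gamma,t}w, \tau_{\gamma,t}w)$ and that the SC inequality may be invoked at the point $\gamma(t)$ with the \emph{transported} velocity in the slot that carries the $1/2$-power. This relies on the defining formula~(\ref{eqn:nabla_uG}) for the covariant derivative of a $(0,2)$-tensor via parallel transport, together with the fact that $\tau_{\gamma,t}w$ and $\dot\gamma(t) = \tau_{\gamma,t}v$ are genuinely parallel along $\gamma$, so no Christoffel-type correction terms appear — this is the place where WSC would not suffice, since~(\ref{eqn:WSC}) only controls the fully diagonal component $(\nabla_u Hf)(u,u)$ and cannot bound $\psi'(t)$ for a $w$ independent of $\dot\gamma$. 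Everything after that is a one-variable differential inequality of Gr\"onwall type followed by an elementary integration, entirely parallel to \cite[Theorem 4.1.6]{Nesterov04}.
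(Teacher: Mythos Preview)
Your proposal is correct and follows essentially the same route as the paper's proof: define $\psi(t)=\tau_{\gamma,t}^{*}Hf_{\gamma(t)}(w,w)$, use the SC inequality and the WSC velocity bound (\ref{eqn:WSC_gamma_2}) to get $|(\log\psi)'(t)|\le 2\sigma r/(1-\sigma t r)$, integrate to obtain (\ref{eqn:SC_Hf}), and then integrate the resulting pointwise bounds in $t$ to get (\ref{eqn:SC_Hf_integral}). Your identification of the key subtlety---that the full SC condition (\ref{eqn:SC}) is needed because $w$ is independent of $\dot\gamma$---is exactly the point the paper is making.
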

\begin{proof}
	Let $u \in T_x$.
	Define $\psi: [0,1] \to \RR$ by
	\begin{equation*}
		\psi(t) := \| \tau_{\gamma,t} u\|^2_{f,\gamma(t)} = \tau^{*}_{\gamma,t} Hf_{\gamma(t)}(u,u).
	\end{equation*}
Then we have
\begin{eqnarray*}
	\left|\psi'(t)\right| &=& \left|(\nabla_{\dot\gamma(t)} Hf_{\gamma(t)})(\tau_{\gamma,t}u,\tau_{\gamma,t}u)\right|  \leq  2 \sigma \| \dot\gamma(t)\|_{f,\gamma(t)} \| \tau_{\gamma,t}u \|_{f,\gamma(t)}^2 \nonumber \\
	& \leq & \frac{2\sigma \|\dot \gamma(0)\|_{f,\gamma(0)}}{1- \sigma t \|\dot\gamma(0)\|_{f,\gamma(0)}} \psi(t).
\end{eqnarray*}
Integrating $(\log \psi(t))' = \psi'(t)/\psi(t)$ from $0$ to $1$, we have
\[
2 \log (1- \sigma\|\dot\gamma(0)\|_{f,\gamma(0)}) \leq \log \frac{\psi(1)}{\psi(0)} \leq - 2 \log (1- \sigma\|\dot\gamma(0)\|_{f,\gamma(0)}),
\]
which means (\ref{eqn:SC_Hf}). The latter inequality (\ref{eqn:SC_Hf_integral}) is obtained by integrating 
\[
(1- \sigma t\|\dot \gamma(0)\|_{f,\gamma(0)})^2 Hf_{\gamma(0)} \preceq \tau_{\gamma,t}^* Hf_{\gamma(t)}	\preceq 
\frac{1}{(1- \sigma t \|\dot\gamma(0)\|_{f,\gamma(0)})^2}
Hf_{\gamma(0)}
\]
from $0$ to $1$.
\end{proof}

An equivalent formulation of (\ref{eqn:SC_Hf}) is 
\begin{equation}\label{eqn:SC_norm}
	1- \sigma \|\dot \gamma(0)\|_{f,\gamma(0)} \leq \frac{\|\tau_{\gamma,1}u \|_{f,\gamma(1)}}{\|u\|_{f,\gamma(0)}}  	\leq 
	\frac{1}{1- \sigma \|\dot\gamma(0)\|_{f,\gamma(0)}} \quad (u \in T_{\gamma(0)}).
\end{equation}
Namely, the parameter $\sigma$ controls variance of 
the local norm $\|\cdot \|_{f}$ under 
the parallel transport with respect to $\nabla$. 
This is in fact equivalent to SC; 
the book by Renegar~\cite{Renegar01} adopts (\ref{eqn:SC_norm}) as the definition of SC 
in the Euclidean case. Indeed, by a similar calculation in \cite[p. 61]{Renegar01}, we can rewrite (\ref{eqn:SC_norm}) as
\begin{eqnarray*}
	&& -2 \sigma \|u\|_{f,\gamma(0)}^2 \|v\|_{f,\gamma(0)} + \sigma^2 t \|v\|^2_{f,\gamma(0)} \leq \\
	&& \quad \quad \frac{\|\tau_{\gamma, t} u\|_{f,\gamma(t)}^2 - \|u\|_{f,\gamma(0)}^2}
 	{t} \leq \frac{2\sigma\|u\|_{f,\gamma(0)}^2\|v\|_{f,\gamma(0)} + \sigma^2 t \|v\|^2_{f,\gamma(0)} }{(1 - \sigma t \|v\|_{f,\gamma(0)})^2},
\end{eqnarray*}  
where $v := \dot\gamma(0)$. For $t \to 0$, we have (\ref{eqn:SC}).
%
%
\paragraph{Information geometric view.}
We finally present a (dual) formulation of the above properties 
in terms of information geometry. 
Define the dual inner product $\langle,\rangle^{*}_{f,x}$ on $T_x^*$ 
by $\langle p,q\rangle^{*}_{f,x} := Hf_x^{-1}(p,q)$.
Accordingly, define the dual norm $\|p\|_{f,x}^* := \sqrt{\langle p,q \rangle^*_{f,x}}$.
\begin{Lem}\label{lem:SC_norm*} Under the same assumption of Proposition~\ref{prop:SC}, it holds
	\begin{equation}
		1- \sigma \|\dot \gamma(0)\|_{f,\gamma(0)} \leq \frac{\|p\|^*_{f,\gamma(1)}}{\|\tau_{\gamma,1}^* p\|^*_{f,\gamma(0)}}\leq 
		\frac{1}{1- \sigma \|\dot\gamma(0)\|_{f,\gamma(0)}} \quad (p \in T_{\gamma(1)}^*).
	\end{equation}
\end{Lem}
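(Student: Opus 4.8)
The plan is to dualize the two-sided Hessian bound (\ref{eqn:SC_Hf}) of Proposition~\ref{prop:SC} by inverting the bilinear forms involved. Write $\delta := 1 - \sigma\|\dot\gamma(0)\|_{f,\gamma(0)}$, which is positive because $\gamma(1) \in W^{0}(\gamma(0);1/\sigma)$ forces $\sigma\|\dot\gamma(0)\|_{f,\gamma(0)} < 1$. Since $f$ is nondegenerate, $Hf_{\gamma(0)}$ and $\tau_{\gamma,1}^* Hf_{\gamma(1)}$ are positive definite, so (\ref{eqn:F<G}) can be applied to each half of (\ref{eqn:SC_Hf}) (together with $(cF)^{-1} = c^{-1}F^{-1}$ for a positive scalar $c$) to get
\[
\delta^2 (Hf_{\gamma(0)})^{-1} \preceq (\tau_{\gamma,1}^* Hf_{\gamma(1)})^{-1} \preceq \frac{1}{\delta^2} (Hf_{\gamma(0)})^{-1}
\]
as symmetric bilinear forms on $T_{\gamma(0)}^*$.

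Next I would rewrite the middle form via (\ref{eqn:tau}), applied with the isomorphism $\tau = \tau_{\gamma,1}: T_{\gamma(0)} \to T_{\gamma(1)}$ and $G = Hf_{\gamma(1)}$: this gives $(\tau_{\gamma,1}^* Hf_{\gamma(1)})^{-1}(p',q') = (Hf_{\gamma(1)})^{-1}((\tau_{\gamma,1}^{-1})^* p', (\tau_{\gamma,1}^{-1})^* q')$ for $p',q' \in T_{\gamma(0)}^*$, where $(\tau_{\gamma,1}^{-1})^* = (\tau_{\gamma,1}^*)^{-1}: T_{\gamma(0)}^* \to T_{\gamma(1)}^*$. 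Now evaluate the displayed chain at $p' = q' = \tau_{\gamma,1}^* p$ for an arbitrary $p \in T_{\gamma(1)}^*$. Since $(\tau_{\gamma,1}^{-1})^*$ and $\tau_{\gamma,1}^*$ are mutually inverse, $(\tau_{\gamma,1}^{-1})^*(\tau_{\gamma,1}^* p) = p$, so the middle term becomes $(Hf_{\gamma(1)})^{-1}(p,p) = (\|p\|^*_{f,\gamma(1)})^2$, while the outer terms are $\delta^{\pm 2}(Hf_{\gamma(0)})^{-1}(\tau_{\gamma,1}^* p, \tau_{\gamma,1}^* p) = \delta^{\pm 2}(\|\tau_{\gamma,1}^* p\|^*_{f,\gamma(0)})^2$. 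Hence
\[
\delta^2 (\|\tau_{\gamma,1}^* p\|^*_{f,\gamma(0)})^2 \le (\|p\|^*_{f,\gamma(1)})^2 \le \frac{1}{\delta^2}(\|\tau_{\gamma,1}^* p\|^*_{f,\gamma(0)})^2,
\]
and taking square roots (noting $\tau_{\gamma,1}^* p \neq 0$ when $p \neq 0$) yields the asserted bound.

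I do not expect a genuine obstacle here: the statement is exactly the $\langle,\rangle^*_f$-counterpart of (\ref{eqn:SC_norm}), and once (\ref{eqn:SC_Hf}) is inverted everything is forced by the linear-algebra identities (\ref{eqn:F<G}) and (\ref{eqn:tau}) recorded in Section~\ref{sec:pre}. The one place that demands a little care is the bookkeeping of the dual and inverse maps in (\ref{eqn:tau}) --- specifically, checking that the form $\tau_{\gamma,1}^{-1}(Hf_{\gamma(1)})^{-1}$ appearing there is the pullback of $(Hf_{\gamma(1)})^{-1}$ along $(\tau_{\gamma,1}^*)^{-1}$, and that it returns $(\|p\|^*_{f,\gamma(1)})^2$ when evaluated at $\tau_{\gamma,1}^* p$ --- so that the dual norm of $p$ at $\gamma(1)$ is compared with the dual norm of $\tau_{\gamma,1}^* p$ at $\gamma(0)$, matching the ratio in the statement.
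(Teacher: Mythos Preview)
Your argument is correct and is exactly the paper's approach: the paper proves this lemma in one line by saying it follows from inverting (\ref{eqn:SC_Hf}) via (\ref{eqn:tau}) and (\ref{eqn:F<G}), which is precisely the computation you carried out in detail.
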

This is obtained from inverting (\ref{eqn:SC_Hf}) by (\ref{eqn:tau}) and (\ref{eqn:F<G}).
The covariant derivative (\ref{eqn:nabla_uG}) of covariant vectors
defines a connection $\nabla$ of cotangent bundle $T^*(M) = \bigcup_{x \in M} T^*_x$, 
where the parallel transport is given by $(\tau^{*}_{\gamma,t})^{-1}: T^*_{\gamma(0)} \to T^*_{\gamma(t)}$.
The Hessian $Hf$ is a vector bundle isomorphism $T(M) \to T^*(M)$.
Then, the dual connection $\nabla^*$ (Remark~\ref{rem:information_geometry}) 
is the pull back of the connection $\nabla$ on $T^*(M)$ by $Hf$; 
see \cite[Proposition 5.1]{NomizuSasaki}.
Specifically, $\nabla^*$ is given by
\begin{equation}
	\nabla^*_u X = Hf^{-1}_x \nabla_u Hf(X) \quad (u \in T_x, x \in M),
\end{equation}
where $Hf(X) = (Hf_x(X_x, \cdot))_{x \in M}$ is a $(0,1)$-tensor field.
The corresponding parallel transport $\delta_{\gamma,t}:T_{\gamma(0)} \to T_{\gamma(t)}$
for a curve $\gamma:[0,t] \to M$ is given by
\begin{equation}
	\delta_{\gamma,t}(u) = Hf_{\gamma(1)}^{-1} (\tau^*_{\gamma,t})^{-1} Hf_{\gamma(0)}(u,\cdot) \quad (u \in T_{\gamma(0)}).
\end{equation}
Consider to transport a vector $u \in T_{\gamma(0)}$ to $T_{\gamma(t)}$ 
along curve $\gamma$ by means of $\nabla$, and then return it back 
to $T_{\gamma(0)}$ along $\gamma$ by means of $\nabla^*$.
This gives rise to an automorphism 
$\delta_{\gamma,t}^{-1} \tau_{\gamma,t}$ on $T_{\gamma(0)}$, which is 
written as
\begin{equation}
	\delta_{\gamma,t}^{-1} \tau_{\gamma,t} u = Hf_{\gamma(0)}^{-1} Hf_{\gamma(t)} (\tau_{\gamma,t} u, \tau_{\gamma,t}(\cdot)) \quad (u \in T_{\gamma(0)}).
\end{equation}
Particularly, $\delta_{\gamma,t}^{-1} \tau_{\gamma,t}$ is 
a symmetric automorphism with respect to $\langle ,\rangle_{f,x}$. 
Then, via (\ref{eqn:G^-1F}), Proposition~\ref{prop:SC} is rephrased as follows.
\begin{Prop}\label{prop:information_geometry}
	Under the same assumption of Proposition~\ref{prop:SC}, we have the following: 
	\begin{itemize}
		\item[(1)] Each eigenvalue $\mu$ of $\delta_{\gamma,1}^{-1} \tau_{\gamma,1}$ 
		satisfies
		\[
		(1-\sigma \|\dot\gamma(0)\|_{f,x})^2 \leq \mu \leq  \frac{1}{(1-\sigma \|\dot\gamma(0)\|_{f,x})^2}.		
		\] 
		Additionally, each eigenvalue $\mu'$ of $\int_{0}^{1}\delta_{\gamma,t}^{-1} \tau_{\gamma,t} {\rm d}t$ satisfies 
			\[
		1-\sigma \|\dot\gamma(0)\|_{f,x}+ \frac{1}{3} \sigma^2 \|\dot\gamma(0)\|_{f,x}^2 \leq \mu' \leq \frac{1}{1-\sigma \|\dot\gamma(0)\|_{f,x}}.		
		\]
		\item[(2)] It holds
			\begin{equation}
			1- \sigma \|\dot \gamma(0)\|_{f,\gamma(0)} \leq \frac{\|v\|_{f,\gamma(1)}}{\|\delta_{\gamma,1}^{-1} v\|_{f,\gamma(0)}}\leq 
			\frac{1}{1- \sigma \|\dot\gamma(0)\|_{f,\gamma(0)}} \quad (v \in T_{\gamma(1)}).
		\end{equation}
	\end{itemize}
\end{Prop}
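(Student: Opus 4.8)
The plan is to derive Proposition~\ref{prop:information_geometry} entirely by translating Proposition~\ref{prop:SC} through the dictionary between symmetric bilinear forms and symmetric automorphisms that is already set up in Section~\ref{sec:pre}. The key observation is that the operator $\delta_{\gamma,1}^{-1}\tau_{\gamma,1}$ is exactly the automorphism $Hf_{\gamma(0)}^{-1}F$ associated, via $G^{-1}F$ with $G = Hf_{\gamma(0)}$, to the bilinear form $F := \tau_{\gamma,1}^* Hf_{\gamma(1)}$; this is precisely the content of the displayed formula $\delta_{\gamma,t}^{-1}\tau_{\gamma,t}u = Hf_{\gamma(0)}^{-1}Hf_{\gamma(t)}(\tau_{\gamma,t}u,\tau_{\gamma,t}(\cdot))$ just before the statement. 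Hence both sandwich inequalities in Proposition~\ref{prop:SC} are of the form $aHf_{\gamma(0)} \preceq F \preceq b Hf_{\gamma(0)}$, and by (\ref{eqn:G^-1F}) applied to $F - aHf_{\gamma(0)}$ and to $bHf_{\gamma(0)} - F$ (both of which are $G^{-1}$-symmetric), this is equivalent to every eigenvalue of $Hf_{\gamma(0)}^{-1}F$ lying in $[a,b]$.

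Concretely, for part (1) I would first note that $\delta_{\gamma,1}^{-1}\tau_{\gamma,1}$ is symmetric with respect to $\langle,\rangle_{f,\gamma(0)}$ — this is stated in the excerpt and follows because $F = \tau_{\gamma,1}^*Hf_{\gamma(1)}$ is a symmetric bilinear form, so $G^{-1}F$ is self-adjoint by the general fact recalled around (\ref{eqn:G^-1F}). Therefore it is diagonalizable with real eigenvalues, and applying the equivalence (\ref{eqn:G^-1F}) to the two inequalities of (\ref{eqn:SC_Hf}) gives exactly the eigenvalue bounds $(1-\sigma\|\dot\gamma(0)\|_{f,x})^2 \leq \mu \leq (1-\sigma\|\dot\gamma(0)\|_{f,x})^{-2}$. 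For the integral statement, the operator $\int_0^1 \delta_{\gamma,t}^{-1}\tau_{\gamma,t}\,{\rm d}t$ is the $Hf_{\gamma(0)}^{-1}$-image of the bilinear form $\int_0^1 \tau_{\gamma,t}^*Hf_{\gamma(t)}\,{\rm d}t$, which is again symmetric (an integral of symmetric forms) and hence self-adjoint; applying (\ref{eqn:G^-1F}) to the two inequalities of (\ref{eqn:SC_Hf_integral}) yields the claimed bounds on $\mu'$.

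For part (2), I would observe that $\|v\|_{f,\gamma(1)}^2 = Hf_{\gamma(1)}(v,v)$ and that setting $u := \tau_{\gamma,1}^{-1}v$ we have $\|\delta_{\gamma,1}^{-1}v\|_{f,\gamma(0)}^2 = Hf_{\gamma(0)}(\delta_{\gamma,1}^{-1}\tau_{\gamma,1}u,\,\delta_{\gamma,1}^{-1}\tau_{\gamma,1}u)$. A cleaner route: (\ref{eqn:SC_norm}) controls $\|\tau_{\gamma,1}u\|_{f,\gamma(1)}/\|u\|_{f,\gamma(0)}$, and (2) asks about $\|v\|_{f,\gamma(1)}/\|\delta_{\gamma,1}^{-1}v\|_{f,\gamma(0)}$ with $v$ ranging over $T_{\gamma(1)}$. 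I would pass to the dual picture: $\|\delta_{\gamma,1}^{-1}v\|_{f,\gamma(0)} = \|Hf_{\gamma(0)}\delta_{\gamma,1}^{-1}v\|^*_{f,\gamma(0)}$ and, because $\delta$ is the $\nabla^*$-parallel transport which is by construction the $Hf$-conjugate of $\tau$, one has $Hf_{\gamma(0)}\delta_{\gamma,1}^{-1}v = \tau_{\gamma,1}^*(Hf_{\gamma(1)}v) = \tau_{\gamma,1}^*p$ where $p := Hf_{\gamma(1)}(v,\cdot)$. Since $\|v\|_{f,\gamma(1)} = \|p\|^*_{f,\gamma(1)}$, the ratio in (2) becomes exactly $\|p\|^*_{f,\gamma(1)}/\|\tau_{\gamma,1}^*p\|^*_{f,\gamma(0)}$, and Lemma~\ref{lem:SC_norm*} delivers the bound.

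The only genuinely delicate point is the bookkeeping in part (2): one must verify that $Hf_{\gamma(0)}\circ\delta_{\gamma,1}^{-1} = \tau_{\gamma,1}^*\circ Hf_{\gamma(1)}$ as maps $T_{\gamma(1)}\to T^*_{\gamma(0)}$, i.e.\ that the $\nabla^*$-transport is precisely the one making $Hf$ a parallel isomorphism — but this is exactly the characterization of $\nabla^*$ quoted from \cite{NomizuSasaki} in the paragraph preceding the proposition, namely that $\nabla^*$ is the pull-back under $Hf$ of the cotangent connection whose transport is $(\tau^*_{\gamma,t})^{-1}$. Everything else is a mechanical application of (\ref{eqn:tau}), (\ref{eqn:G^-1F}), (\ref{eqn:F<G}), and self-adjointness, so once this identification is stated the rest is immediate.
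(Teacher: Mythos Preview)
Your proposal is correct and follows essentially the same approach as the paper: part~(1) is obtained by applying (\ref{eqn:G^-1F}) to the sandwich inequalities of Proposition~\ref{prop:SC} via the identification $\delta_{\gamma,t}^{-1}\tau_{\gamma,t} = Hf_{\gamma(0)}^{-1}(\tau_{\gamma,t}^*Hf_{\gamma(t)})$, and part~(2) is obtained from Lemma~\ref{lem:SC_norm*} together with the identity $\|\tau_{\gamma,1}^*p\|^*_{f,\gamma(0)} = \|\delta_{\gamma,1}^{-1}v\|_{f,\gamma(0)}$ for $p = Hf_{\gamma(1)}v$. Your write-up is simply more detailed than the paper's one-line justifications.
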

The second statement follows from Lemma~\ref{lem:SC_norm*} 
and $\| \tau_{\gamma,1}^*p\|^*_{f,\gamma(0)} = \|\delta^{-1}_{\gamma,1}v\|_{f,\gamma(0)}$ 
for $p = Hf_{\gamma(1)}v$.

%

\subsection{Newton's method}
Here we consider the Newton's method on $M$; 
see \cite[Section 6.2]{Boumal23} for a manifold version of Newton's method.
Based on the second order approximation 
$u \mapsto f(x) + Df_x(u) + \frac{1}{2} Hf_x(u,u)$ of $u \mapsto f (\exp_x u)$, 
the {\em Newton direction} at $x \in C$
is defined as the vector  $\overline{u} \in T_x$ satisfying
\begin{equation}
	Df_{x}(\cdot) + Hf_x(\overline{u},\cdot) = 0.
\end{equation}
Equivalently, $\overline{u} := - Hf_x^{-1}(Df_x)$, 
where $Hf_x$ is viewed as $T_{x} \to T_x^*$. 
Then the Newton update at $x$ is formulated as 
\begin{equation}
x \leftarrow x^+ := \exp_{x}\overline{u}.
\end{equation}
The {\em Newton decrement} $\lambda_f(x)$ is defined by
\begin{equation}
	\lambda_f(x) := \|\overline{u}\|_{f,x}  = \|Df_{x}\|^*_{f,x}. 
\end{equation}
For SC functions, 
the Newton decrement 
provides a  certificate for the current point $x$ to belong to 
the region of quadratic convergence of the Newton iterations.
The following corresponds to \cite[Theorem 4.1.14]{Nesterov04} 
(\cite[Theorem 5.2.2.1]{Nesterov18}), 
\begin{Thm}\label{thm:Newton}
	Suppose that $f$ is $\sigma$-SC.
	For $x \in C$, if $\lambda_f(x) < 1/\sigma$, then $x^+ \in C$ and
	\begin{equation}\label{eqn:quadratic}
	\lambda_f(x^+) \leq \frac{\sigma \lambda_f(x)^2}{(1- \sigma\lambda_f(x))^2}.
	\end{equation}
	In particular, if $\lambda_f(x) < (3- \sqrt{5})/2\sigma = 0.3819\cdots /\sigma$, 
	then the point $x$ belongs to 
	the region of quadratic convergence of the Newton iteration.
\end{Thm}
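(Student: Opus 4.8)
The plan is to mimic the standard Euclidean argument (as in Nesterov) for the quadratic convergence of Newton's method, but carefully substitute the manifold-level facts established in the preceding propositions for the Euclidean identities that are normally used. Let $x \in C$ with $\lambda := \lambda_f(x) < 1/\sigma$, let $\overline{u} = -Hf_x^{-1}(Df_x) \in T_x$ be the Newton direction (so $\|\overline{u}\|_{f,x} = \lambda$), and let $\gamma(t) := \exp_x(t\overline{u})$, a geodesic with $\dot\gamma(0) = \overline{u}$. Since $\lambda < 1/\sigma$, Proposition~\ref{prop:WSC1}(1) gives $x^+ = \gamma(1) = \exp_x \overline{u} \in W^0(x;1/\sigma) \subseteq C$, which handles the first claim. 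For the decrement bound, the key object is $Df_{x^+} \in T_{x^+}^*$; I would estimate its dual norm $\lambda_f(x^+) = \|Df_{x^+}\|^*_{f,x^+}$.

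The first step is to express $Df_{x^+}$ as an integral along $\gamma$. Using the fundamental theorem of calculus along the geodesic together with the definition of the Hessian as the covariant derivative of $Df$, one obtains, for any vector field along $\gamma$, an identity of the shape
\[
Df_{\gamma(1)}(\tau_{\gamma,1} w) - Df_{\gamma(0)}(w) = \int_0^1 Hf_{\gamma(t)}(\dot\gamma(t), \tau_{\gamma,t} w)\,{\rm d}t \qquad (w \in T_x),
\]
which follows because $\frac{\rm d}{{\rm d}t} Df_{\gamma(t)}(\tau_{\gamma,t} w) = (\nabla_{\dot\gamma(t)} Df)(\tau_{\gamma,t}w) = Hf_{\gamma(t)}(\dot\gamma(t),\tau_{\gamma,t}w)$ using parallelism of $\dot\gamma$ and of $\tau_{\gamma,t}w$. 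Now substitute the Newton equation $Df_{\gamma(0)}(w) = -Hf_{\gamma(0)}(\overline{u},w)$ and the fact $\dot\gamma(t) = \tau_{\gamma,t}\overline{u}$, so that the $t=0$ part of the integrand matches $Hf_{\gamma(0)}(\overline{u},w)$. This yields
\[
Df_{\gamma(1)}(\tau_{\gamma,1}w) = \int_0^1 \bigl[ Hf_{\gamma(t)}(\tau_{\gamma,t}\overline{u}, \tau_{\gamma,t}w) - Hf_{\gamma(0)}(\overline{u},w)\bigr]{\rm d}t = \Bigl(\int_0^1 \tau^*_{\gamma,t}Hf_{\gamma(t)}\,{\rm d}t - Hf_{\gamma(0)}\Bigr)(\overline{u},w).
\]
Writing $G := \int_0^1 \tau^*_{\gamma,t}Hf_{\gamma(t)}\,{\rm d}t$ and $H := Hf_{\gamma(0)}$, this says $(\tau^*_{\gamma,1})Df_{\gamma(1)} = (G - H)\overline{u}$ as an element of $T_x^*$. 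Taking dual norms at $x^+$ and using the dual version of the SC norm-comparison (Lemma~\ref{lem:SC_norm*}) to move the norm from $x^+$ back to $x$, we get $\lambda_f(x^+) = \|Df_{x^+}\|^*_{f,x^+} \leq \frac{1}{1-\sigma\lambda}\|(G-H)\overline{u}\|^*_{f,x}$.

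The second step is to bound $\|(G-H)\overline{u}\|^*_{f,x}$ using the operator sandwich from Proposition~\ref{prop:SC}, inequality~(\ref{eqn:SC_Hf_integral}): $\{1-\sigma\lambda + \tfrac13\sigma^2\lambda^2\}H \preceq G \preceq \frac{1}{1-\sigma\lambda}H$. Hence the symmetric operator $H^{-1}(G-H)$ (symmetric w.r.t. $\langle,\rangle_{f,x}$, via the discussion around (\ref{eqn:G^-1F})) has all eigenvalues in the interval $[-\sigma\lambda + \tfrac13\sigma^2\lambda^2,\ \frac{1}{1-\sigma\lambda}-1] = [-\sigma\lambda+\tfrac13\sigma^2\lambda^2,\ \frac{\sigma\lambda}{1-\sigma\lambda}]$, so its spectral norm is at most $\frac{\sigma\lambda}{1-\sigma\lambda}$ (one checks $|-\sigma\lambda+\tfrac13\sigma^2\lambda^2| \le \frac{\sigma\lambda}{1-\sigma\lambda}$ for $\sigma\lambda \in [0,1)$). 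Since $\|(G-H)\overline{u}\|^*_{f,x} = \|H^{-1}(G-H)\overline{u}\|_{f,x} \leq \frac{\sigma\lambda}{1-\sigma\lambda}\|\overline{u}\|_{f,x} = \frac{\sigma\lambda^2}{1-\sigma\lambda}$, combining with the factor $\frac{1}{1-\sigma\lambda}$ above gives exactly $\lambda_f(x^+) \leq \frac{\sigma\lambda^2}{(1-\sigma\lambda)^2}$, which is (\ref{eqn:quadratic}). Finally, for the quadratic-convergence region, set $g(\lambda) = \frac{\sigma\lambda^2}{(1-\sigma\lambda)^2}$ and solve $g(\lambda) \le \lambda$, i.e. $\sigma\lambda \le (1-\sigma\lambda)^2$, i.e. $(\sigma\lambda)^2 - 3(\sigma\lambda) + 1 \ge 0$, which holds for $\sigma\lambda \le \frac{3-\sqrt5}{2}$; in that regime $\sigma\lambda_f(x^+) \le \frac{(\sigma\lambda)^2}{(1-\sigma\lambda)^2} \le (\sigma\lambda)\cdot\frac{\sigma\lambda}{(1-\sigma\lambda)^2}$ with $\frac{\sigma\lambda}{(1-\sigma\lambda)^2} < 1$, yielding contraction and hence invariance of the region and quadratic decay.

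The step I expect to be the main obstacle is the first one: getting the integral representation of $Df_{x^+}$ right at the level of covectors and parallel transports, since everything must be expressed through $\tau_{\gamma,t}$ (and its dual) rather than a flat difference, and one must be careful that the "constant" term being subtracted is genuinely the $t=0$ value of the same parallel-transported integrand — this is where the information-geometric bookkeeping of $\nabla$ versus the metric $\langle,\rangle_f$ matters. Once that identity is clean, the rest is an eigenvalue estimate that transfers verbatim from the Euclidean proof via (\ref{eqn:G^-1F}), (\ref{eqn:SC_Hf_integral}), and Lemma~\ref{lem:SC_norm*}.
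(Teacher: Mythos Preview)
Your proof is correct and is essentially the same as the paper's. The paper packages the argument in the information-geometric language of the dual connection $\nabla^*$ and its parallel transport $\delta$ (Proposition~\ref{prop:information_geometry} and the lemma giving~(\ref{eqn:Newton})), but unwinding those definitions one has $\delta_{\gamma,t}^{-1}\tau_{\gamma,t} = H^{-1}\,\tau_{\gamma,t}^* Hf_{\gamma(t)}$, so the paper's identity $\delta_{\gamma,1}^{-1}\overline{u}^+ = \overline{u} - \int_0^1 \delta_{\gamma,t}^{-1}\tau_{\gamma,t}\overline{u}\,{\rm d}t$ is exactly your $\tau_{\gamma,1}^* Df_{x^+} = (G-H)\overline{u}$ after multiplying by $-H^{-1}$, the operator $K$ in the paper is $-H^{-1}(G-H)$, and the use of Proposition~\ref{prop:information_geometry} (1),(2) is precisely your use of (\ref{eqn:SC_Hf_integral}) and Lemma~\ref{lem:SC_norm*}.
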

The proof basically adapts \cite[Therem 4.1.14]{Nesterov04}, 
although it may be less obvious than others and has a different presentation for 
revealing its information geometric nature.
The following formula of the next Newton direction by 
parallel transports $\delta,\tau$ of $\overline{u}$
may be interesting in its own right, 
though it is an obvious rewriting. 
\begin{Lem}
	Suppose that $x^+ = \exp_x \overline{u}$ belongs to $C$.
Let $\overline{u}^{+}$ denote the Newton direction at $x^+$.	
Then,
\begin{equation}\label{eqn:Newton}
\delta_{\gamma,1}^{-1} \overline{u}^{+} = \overline{u} - \int_{0}^{1} \delta_{\gamma,t}^{-1} \tau_{\gamma,t} \overline{u} {\rm d}t,
\end{equation}
where $\gamma(t) := \exp_{x}t\overline{u}$.
\end{Lem}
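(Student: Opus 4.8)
The plan is to rewrite the defining equation of the Newton direction at $x^+$ and then transport everything back to $T_x$ along $\gamma$ using the two parallel transports. Recall $\overline{u}^{+} = - Hf_{x^+}^{-1}(Df_{x^+})$, so what we must control is the covector $Df_{x^+} \in T_{x^+}^*$. First I would express $Df_{x^+}$ by the fundamental theorem of calculus along $\gamma$: for any vector field obtained by parallel transport, $Df_{\gamma(1)}(\tau_{\gamma,1} w) - Df_{\gamma(0)}(w) = \int_0^1 \frac{\rm d}{{\rm d}t} Df_{\gamma(t)}(\tau_{\gamma,t} w)\,{\rm d}t = \int_0^1 Hf_{\gamma(t)}(\dot\gamma(t), \tau_{\gamma,t} w)\,{\rm d}t$, since $\tau_{\gamma,t} w$ is parallel and $\dot\gamma(t) = \tau_{\gamma,t}\overline{u}$. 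Evaluated as covectors in $T_x^*$ via the pullback $\tau_{\gamma,1}^*$, this reads $\tau_{\gamma,1}^* Df_{\gamma(1)} = Df_{\gamma(0)} + \int_0^1 \tau_{\gamma,t}^* Hf_{\gamma(t)}(\overline{u},\cdot)\,{\rm d}t$, i.e. $\tau_{\gamma,1}^* Df_{x^+} = Df_x + \left(\int_0^1 \tau_{\gamma,t}^* Hf_{\gamma(t)}\,{\rm d}t\right)(\overline{u},\cdot)$.

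Next I would apply $Hf_x^{-1}$ (viewed as $T_x^* \to T_x$) to this identity. Using the definition $\overline{u} = - Hf_x^{-1}(Df_x)$, the term $Hf_x^{-1}(Df_x)$ becomes $-\overline{u}$. For the integral term, I use the explicit description of $\delta_{\gamma,t}^{-1}\tau_{\gamma,t}$ from the excerpt: $\delta_{\gamma,t}^{-1}\tau_{\gamma,t} u = Hf_{x}^{-1} (\tau_{\gamma,t}^* Hf_{\gamma(t)})(u,\cdot)$, so $Hf_x^{-1}\left[\left(\int_0^1 \tau_{\gamma,t}^* Hf_{\gamma(t)}\,{\rm d}t\right)(\overline{u},\cdot)\right] = \int_0^1 \delta_{\gamma,t}^{-1}\tau_{\gamma,t}\overline{u}\,{\rm d}t$. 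Hence $Hf_x^{-1}(\tau_{\gamma,1}^* Df_{x^+}) = -\overline{u} + \int_0^1 \delta_{\gamma,t}^{-1}\tau_{\gamma,t}\overline{u}\,{\rm d}t$.

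Finally I would match the left-hand side to $\delta_{\gamma,1}^{-1}\overline{u}^{+}$. Since $\overline{u}^{+} = -Hf_{x^+}^{-1}(Df_{x^+})$ and $\delta_{\gamma,1}(w) = Hf_{x^+}^{-1}(\tau_{\gamma,1}^*)^{-1} Hf_x(w,\cdot)$, one has $\delta_{\gamma,1}^{-1}(w) = Hf_x^{-1}(\tau_{\gamma,1}^* Hf_{x^+}(w,\cdot))$ for $w \in T_{x^+}$; applying this with $w = \overline{u}^{+}$ and $Hf_{x^+}(\overline{u}^{+},\cdot) = -Df_{x^+}$ gives $\delta_{\gamma,1}^{-1}\overline{u}^{+} = - Hf_x^{-1}(\tau_{\gamma,1}^* Df_{x^+})$. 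Combining with the previous display yields $\delta_{\gamma,1}^{-1}\overline{u}^{+} = \overline{u} - \int_0^1 \delta_{\gamma,t}^{-1}\tau_{\gamma,t}\overline{u}\,{\rm d}t$, which is (\ref{eqn:Newton}).

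The computation is essentially bookkeeping with the pullback/inverse conventions for bilinear forms set up in Section~\ref{sec:pre}, so the main obstacle is purely one of care: keeping straight whether each $\tau_{\gamma,t}$ acts as a map on vectors or via its dual $\tau_{\gamma,t}^*$ on covectors, and correctly using that $Hf^{-1}$ intertwines these (equation (\ref{eqn:tau})) so that the composition $\delta^{-1}\tau$ takes the stated form. No self-concordance is needed for this lemma — only that $x^+ \in C$, so $Hf_{x^+}$ is defined and nondegenerate and the transports along $\gamma|_{[0,1]}$ make sense.
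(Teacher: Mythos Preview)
Your proof is correct and follows essentially the same route as the paper's: both compute $\tau_{\gamma,1}^* Df_{x^+}$ by the fundamental theorem of calculus along $\gamma$, use $Df_x = -Hf_x(\overline{u},\cdot)$, and then apply $-Hf_x^{-1}$ while recognizing $Hf_x^{-1}(\tau_{\gamma,t}^* Hf_{\gamma(t)})(\cdot,\cdot)$ as $\delta_{\gamma,t}^{-1}\tau_{\gamma,t}$. The only cosmetic difference is ordering — the paper first identifies $\delta_{\gamma,1}^{-1}\overline{u}^+ = -Hf_x^{-1}\tau_{\gamma,1}^* Df_{x^+}$ and then expands, whereas you expand first and match afterwards.
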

\begin{proof}
	From $\overline{u}^+ = - Hf_{\gamma(1)}^{-1} Df_{\gamma(1)}$, 
	it holds $\delta_{\gamma,1}^{-1} \overline{u}^{+} = - Hf_{\gamma(0)}^{-1} \tau^{*}_{\gamma,1} Hf_{\gamma(1)} Hf_{\gamma(1)}^{-1} Df_{\gamma(1)} = - Hf_{\gamma(0)}^{-1} \tau^{*}_{\gamma,1}  Df_{\gamma(1)}$. Here $\tau^{*}_{\gamma,1}  Df_{\gamma(1)}$ is written as
	\[
	\tau_{\gamma,1}^* Df_{x^+}  =  Df_{x}(\cdot)  + \int_{0}^{1} \frac{\rm d}{{\rm d}t}  Df_{\gamma(t)}(\tau_{\gamma,t}(\cdot)) {\rm d}t   = - Hf_x(\overline{u},\cdot) + \int_{0}^{1} Hf_{\gamma(t)}(\tau_{\gamma,t}\overline{u}, \tau_{\gamma,t}(\cdot))  {\rm d}t.
	\]
	Applying $- Hf_{\gamma(0)}^{-1}$, we have (\ref{eqn:Newton}).
\end{proof}

\begin{proof}[Proof of Theorem~\ref{thm:Newton}]
	Let $\lambda := \lambda_f(x)$.
	Then $x^+\in C$ follows from Proposition~\ref{prop:SCB1} (1) and 
	$\lambda_f(x) = \|\overline{u}\| < 1/\sigma$.
	By Proposition~\ref{prop:information_geometry} (2), we have
	\begin{equation}\label{eqn:lambda+}
		\lambda_f(x^+) = \|\overline{u}^+\|_{f,x^+} \leq \frac{1}{1- \sigma \lambda} \|\delta_{\gamma,1}^{-1} \overline{u}^+ \|_{f,x}.
	\end{equation}
	Let $K: T_{x} \to T_x$ denote the symmetric linear transformation 
	defined by 
	\[
	K u :=  u - \int_{0}^{1} \delta_{\gamma,t}^{-1} \tau_{\gamma,t} u {\rm d}t.
	\]
	By the above lemma, 
	it holds $\delta_{\gamma,1}^{-1} \overline{u}^+  = K\overline{u}$.
	By Proposition~\ref{prop:information_geometry} (1) (and $\sigma \lambda < 1$), the absolute value 
	of each eigenvalue of $K$ is bounded above by $(1- \sigma \lambda)^{-1} - 1 = \sigma \lambda/(1- \sigma\lambda)$. Hence we have
	\begin{equation}\label{eqn:norm}
		\|\delta_{\gamma,1}^{-1} \overline{u}^+ \|_{f,x} = \| K \overline u\|_{f,x} \leq \|K \|_{\rm op} \|\overline{u} \|_{f,x} \leq 
		\frac{\sigma \lambda}{1- \sigma \lambda}\lambda,
	\end{equation}
where $\|K \|_{\rm op} := \sup_{u \in T_x, u \neq 0}\|K u\|_{f,x}/\|u\|_{f,x}$ denotes the operator norm of $K$. 
By (\ref{eqn:lambda+}) and (\ref{eqn:norm}), we have (\ref{eqn:quadratic}).	
\end{proof}
Consider a scaled variant ({\em damped Newton update}) of the Newton update
\begin{equation}
x \leftarrow x^+_{\rm damp} := \exp_{x}\frac{1}{1+ \sigma\lambda_f(x)}\overline{u}.
\end{equation}
This 
guarantees global convergence, as in \cite[Theorem 4.1.12]{Nesterov04}(\cite[Theorem 5.1.14]{Nesterov18}). 
\begin{Prop}
	Suppose that $f$ is $\sigma$-WSC. It holds $x^+_{\rm damp} \in C$ and
	\begin{equation}
	f(x^+_{\rm damp}) \leq f(x) - \frac{1}{\sigma^2} \omega(\sigma \lambda_f(x)).
	\end{equation}
	Thus, for given $x_0 \in M$,  
	the damped Newton update
	produces a point $x$  with $\lambda_f(x) < \beta/\sigma$ in $O(\sigma^2(f(x_0)-f^*)/\omega(\beta))$ iterations. 
\end{Prop}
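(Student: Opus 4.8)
The plan is to adapt Nesterov's argument \cite[Theorem 4.1.12]{Nesterov04} essentially verbatim, since in the WSC setting nothing beyond Propositions~\ref{prop:WSC1} and \ref{prop:WSC2} is needed. Write $\lambda := \lambda_f(x)$, $\alpha := 1/(1+\sigma\lambda)$, and let $\overline{u} = -Hf_x^{-1}(Df_x)$ be the Newton direction, so that $x^+_{\rm damp} = \exp_x(\alpha\overline{u})$ and $\|\alpha\overline{u}\|_{f,x} = \alpha\lambda = \lambda/(1+\sigma\lambda) < 1/\sigma$. Thus for every $t\in[0,1]$ the geodesic point $\exp_x(t\alpha\overline{u})$ has initial velocity of $f$-norm $t\alpha\lambda < 1/\sigma$ and therefore lies in the Dikin ellipsoid $W^0(x;1/\sigma)$, which is contained in $C$ by Proposition~\ref{prop:WSC1}(1). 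In particular $x^+_{\rm damp}\in C$, and the geodesic $\gamma(t) := \exp_x(t\alpha\overline{u})$, $t\in[0,1]$, stays in $C$ with $\gamma(1)\in W^0(\gamma(0);1/\sigma)$.

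Next I would apply the upper estimate (\ref{eqn:WSC2_f}) of Proposition~\ref{prop:WSC2} to $\gamma$, which yields
\[
f(x^+_{\rm damp}) \;\le\; f(x) + \alpha\,Df_x(\overline{u}) + \frac{1}{\sigma^2}\,\omega_*\!\bigl(\sigma\alpha\lambda\bigr).
\]
Here $Df_x(\overline{u}) = -Hf_x(Hf_x^{-1}Df_x, Hf_x^{-1}Df_x) = -(\|Df_x\|^*_{f,x})^2 = -\lambda^2$. Writing $t := \sigma\lambda$, substituting $\alpha\lambda = \lambda/(1+t)$ and $\sigma\alpha\lambda = t/(1+t)$, and using $\omega_*(t/(1+t)) = -t/(1+t) + \log(1+t)$, the two $\sigma^{-2}$-terms on the right combine to $\sigma^{-2}\bigl(-t^2/(1+t) - t/(1+t) + \log(1+t)\bigr) = \sigma^{-2}\bigl(-t + \log(1+t)\bigr) = -\sigma^{-2}\omega(\sigma\lambda)$, giving the claimed one-step decrease. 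It is worth noting that $\alpha = 1/(1+\sigma\lambda)$ is precisely the minimizer over $\alpha\ge 0$ of the bound $-\alpha\lambda^2 + \sigma^{-2}\omega_*(\sigma\alpha\lambda)$, which is why this step length is chosen and why the $\omega_*$-upper bound collapses to an exact $-\omega$-decrease.

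For the complexity claim I would iterate this inequality. Since $\omega'(t) = t/(1+t) \ge 0$, $\omega$ is nondecreasing on $[0,\infty)$, so as long as $\lambda_f(x)\ge\beta/\sigma$ a single damped Newton update decreases $f$ by at least $\sigma^{-2}\omega(\beta) > 0$. If the first $N$ damped updates from $x_0$ all had $\lambda_f \ge \beta/\sigma$, telescoping would give $f(x_0) - f^* \ge N\sigma^{-2}\omega(\beta)$, forcing $N \le \sigma^2\bigl(f(x_0)-f^*\bigr)/\omega(\beta)$; hence a point with $\lambda_f(x) < \beta/\sigma$ is produced within $O\bigl(\sigma^2(f(x_0)-f^*)/\omega(\beta)\bigr)$ iterations. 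There is essentially no obstacle: all manifold-specific work sits inside Propositions~\ref{prop:WSC1}--\ref{prop:WSC2}, and the only items needing attention are the domain check $\alpha\lambda < 1/\sigma$ (so that those propositions apply to $\gamma$) and the elementary identity turning the $\omega_*$-bound into the $-\omega$-decrease.
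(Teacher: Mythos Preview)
Your proof is correct and follows essentially the same route as the paper: verify $\|\alpha\overline u\|_{f,x}<1/\sigma$ to place $x^+_{\rm damp}$ in $C$ via Proposition~\ref{prop:WSC1}(1), then plug into the upper bound (\ref{eqn:WSC2_f}) of Proposition~\ref{prop:WSC2} and simplify $-\alpha\lambda^2+\sigma^{-2}\omega_*(\sigma\alpha\lambda)$ to $-\sigma^{-2}\omega(\sigma\lambda)$. Your telescoping argument for the iteration bound and the remark that $\alpha=1/(1+\sigma\lambda)$ minimizes the bound are extra detail the paper leaves implicit, but the core argument is identical.
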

\begin{proof}
	Let $\lambda := \lambda_f(x)$. 
	By $\|\overline{u}/(1+ \sigma \lambda)\|_{f,x} = \lambda/(1+ \sigma \lambda) < 1/\sigma$ and Proposition~\ref{prop:WSC1}~(1), it holds $x_{\rm damp}^+ \in C$. By Proposition~\ref{prop:WSC2}, we have
	\begin{eqnarray*}
		f(x^+_{\rm damp}) &\leq& f(x) + \frac{1}{1+ \sigma \lambda}Df_x(\overline{u}) + \frac{1}{\sigma^2} \omega_* \left(\sigma \frac{\lambda}{1+ \sigma \lambda}\right) \\
		&=&  f(x) - \frac{\lambda^2}{1+ \sigma \lambda} + \frac{1}{\sigma^2} \omega_*\left(\sigma \frac{\lambda}{1+ \sigma \lambda}\right)  = f(x) - \frac{1}{\sigma^2} \omega(\sigma \lambda).
	\end{eqnarray*}
\end{proof}

\subsection{Self-concordant barrier}

For $\nu > 0$, 
a {\em $\nu$-self-concordant barrier ($\nu$-SCB)} 
of (the closure of) $C$ is a $1$-SC function $F:C \to \RR$ satisfying
\begin{equation}
	\sup_{u \in T_x} 2 DF(u) - HF(u,u) \leq \nu \quad (x \in M).
\end{equation}
By nondegeneracy of $F$,  
this condition is equivalent to
\begin{equation}\label{eqn:SCB}
	 DF(u)^2 \leq \nu HF(u,u) \quad (u \in T_x, x \in M).
\end{equation}
The following corresponds to \cite[Theorem 4.2.2]{Nesterov04} (\cite[Theorem 5.3.2]{Nesterov18}).
\begin{Lem}\label{lem:sum_SCB}
	Let $F_i:C_i \to \RR$ be a $\nu_i$-SCB for $i=1,2$.
	If $C_1 \cap C_2$ has nonempty interior, then $F_1 + F_2: C_1 \cap C_2 \to \RR$
	is a $(\nu_1+\nu_2)$-SCB.
\end{Lem}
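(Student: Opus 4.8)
The plan is to verify the two defining requirements of a $(\nu_1+\nu_2)$-SCB for the function $F := F_1 + F_2$ on the set $C_1 \cap C_2$: that $F$ is $1$-SC, and that it obeys the barrier inequality (\ref{eqn:SCB}) with parameter $\nu_1 + \nu_2$. The first requirement is immediate from Lemma~\ref{lem:sum}: each $F_i$ is $1$-SC, so taking $\alpha = \beta = 1$ and $\sigma_1 = \sigma_2 = 1$ there gives that $F$ is $\max\{1,1\} = 1$-SC, in particular closed and nondegenerate on $C_1 \cap C_2$. (One also records that $C_1 \cap C_2$ is again open and geodesically convex, since any geodesic with endpoints in $C_1 \cap C_2$ lies in each $C_i$ and hence in the intersection; the nonempty-interior hypothesis only ensures that $F$ is defined on a set where the SCB notion is meaningful.)

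It remains to establish (\ref{eqn:SCB}) for $F$, and here the argument is purely pointwise in $T_x$ and involves neither $\nabla$ nor the curvature, so it is identical to the Euclidean one. Fix $x \in C_1 \cap C_2$ and $u \in T_x$, and write $a_i := DF_i(u)$ and $b_i := HF_i(u,u) \ge 0$ (the latter by convexity of $F_i$); by (\ref{eqn:SCB}) applied to $F_i$ we have $a_i^2 \le \nu_i b_i$. Then the Cauchy--Schwarz inequality gives
\[
DF(u)^2 = (a_1 + a_2)^2 \le (\nu_1 + \nu_2)\left(\frac{a_1^2}{\nu_1} + \frac{a_2^2}{\nu_2}\right) \le (\nu_1 + \nu_2)(b_1 + b_2) = (\nu_1 + \nu_2)\,HF(u,u),
\]
which is precisely the required barrier inequality.

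There is essentially no obstacle here: the only manifold-specific ingredient is the $1$-SC conclusion, which Lemma~\ref{lem:sum} already supplies, and the rest reduces to the same two-line Cauchy--Schwarz computation as in \cite[Theorem 4.2.2]{Nesterov04}. If one wants to be scrupulous, the point worth double-checking is simply that $HF_i(u,u)\ge 0$ (so that the last inequality in the display is genuinely in the right direction), which is guaranteed by convexity of each $F_i$.
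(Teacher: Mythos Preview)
Your proof is correct and is precisely the standard argument the paper alludes to: the paper omits the proof entirely, stating only that it is the same as in the Euclidean case, and your write-up supplies exactly that---invoking Lemma~\ref{lem:sum} for the $1$-SC property and the Cauchy--Schwarz step $(a_1+a_2)^2 \le (\nu_1+\nu_2)(a_1^2/\nu_1 + a_2^2/\nu_2)$ for the barrier inequality (\ref{eqn:SCB}). One minor remark: since each $C_i$ is open, so is $C_1\cap C_2$, and the nonempty-interior hypothesis just means the intersection is nonempty (it is the hypothesis needed to invoke Lemma~\ref{lem:sum}), not merely a technicality about where SCB makes sense.
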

The proof is the same and is omitted.
The next corresponds to \cite[Theorem 4.2.4.1]{Nesterov04} (\cite[Theorem 5.3.7.1]{Nesterov18}).
\begin{Prop}\label{prop:SCB1}
	Let $F:C \to \RR$ be a $\nu$-SCB. For any geodesic $\gamma:[0,1] \to C$, it holds
	\begin{equation}\label{eqn:SCB_DF0}
		DF_{\gamma(0)}(\dot\gamma(0)) < \nu.
	\end{equation}
\end{Prop}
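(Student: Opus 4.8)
The plan is to follow the Euclidean argument of \cite[Theorem 4.2.4.1]{Nesterov04} and derive the strict inequality $DF_{\gamma(0)}(\dot\gamma(0)) < \nu$ by studying the univariate function $t \mapsto DF_{\gamma(t)}(\dot\gamma(t)) = (F\circ\gamma)'(t)$ along the geodesic. First I would set $\phi(t) := (F\circ\gamma)'(t)$ and note $\phi'(t) = (F\circ\gamma)''(t) = HF_{\gamma(t)}(\dot\gamma(t),\dot\gamma(t)) \geq 0$, so $\phi$ is nondecreasing; in particular it suffices to show that $\phi$ cannot stay $\geq \nu$ on all of $[0,1]$, i.e., to derive a contradiction from $\phi(0) \geq \nu$. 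The key inequality is the barrier condition \eqref{eqn:SCB}, which along the geodesic reads $\phi(t)^2 \leq \nu\,\phi'(t)$.

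From $\phi(t)^2 \leq \nu\phi'(t)$, as long as $\phi(t) > 0$ we get $-\bigl(1/\phi(t)\bigr)' = \phi'(t)/\phi(t)^2 \geq 1/\nu$. Supposing for contradiction that $\phi(0) \geq \nu$ (hence $\phi(t) \geq \nu > 0$ for all $t \in [0,1]$ since $\phi$ is nondecreasing), integrating this from $0$ to $1$ gives $1/\phi(0) - 1/\phi(1) \geq 1/\nu$, whence $1/\phi(1) \leq 1/\phi(0) - 1/\nu \leq 1/\nu - 1/\nu = 0$, forcing $\phi(1) < 0$ or $\phi(1) = +\infty$ — in any case contradicting $\phi(1) \geq \phi(0) \geq \nu > 0$. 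Therefore $\phi(0) = DF_{\gamma(0)}(\dot\gamma(0)) < \nu$, as claimed. (One can also phrase this without contradiction: if $DF_{\gamma(0)}(\dot\gamma(0)) \leq 0$ the bound is trivial, and otherwise the same integration on the maximal subinterval where $\phi > 0$ — which is all of $[0,1]$ by monotonicity — yields $DF_{\gamma(0)}(\dot\gamma(0)) \leq \nu \phi(1)/(\phi(1)+\nu) < \nu$.)

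The only genuinely manifold-specific ingredient is the identification $(F\circ\gamma)'(t) = DF_{\gamma(t)}(\dot\gamma(t))$ and $(F\circ\gamma)''(t) = HF_{\gamma(t)}(\dot\gamma(t),\dot\gamma(t))$ for a geodesic $\gamma$; the first is the definition of $DF$, and the second is Lemma~\ref{lem:Hf(u,u)} applied along the geodesic (using $\dot\gamma(t) = \tau_{\gamma,t}\dot\gamma(0)$). With those in hand the argument is identical to the Euclidean one, so I do not expect any real obstacle here — the proof is short and purely one-dimensional. The one point to be careful about is that the geodesic $\gamma$ is only assumed defined on $[0,1]$ with image in the open convex set $C$, so $\phi$ is finite and smooth on $[0,1]$ and the integration is legitimate; no completeness or extendability of $\gamma$ beyond $[0,1]$ is needed, and in fact the monotonicity of $\phi$ already guarantees $\phi$ does not blow up on the compact interval $[0,1]$.
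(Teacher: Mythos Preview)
Your proof is correct and follows essentially the same approach as the paper: define $\phi(t) = DF_{\gamma(t)}(\dot\gamma(t))$, use the barrier condition (\ref{eqn:SCB}) to get $\phi' \geq \phi^2/\nu$, and integrate $\phi'/\phi^2 \geq 1/\nu$ over $[0,1]$ to conclude $\phi(0) < \nu$. The paper phrases it directly (assume $\phi(0)>0$, deduce $1/\phi(0) > 1/\nu$ from $1/\phi(0) - 1/\phi(1) \geq 1/\nu$) rather than by contradiction, but this is only a cosmetic difference.
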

\begin{proof}
	We can assume that $DF_{\gamma(0)}(\dot \gamma(0)) > 0$. 
	Define $\phi:[0,1] \to \RR$ by $t \mapsto  DF_{\gamma(t)} (\dot \gamma(t))$.
	By (\ref{eqn:SCB}), it holds
	\[
	\phi'(t) = HF_{\gamma(t)}(\dot \gamma(t),\dot \gamma(t))\geq (1/\nu) \phi(t)^2.
	\]
	In particular, $\phi$ is monotone increasing, and positive. 
	Integrating $\phi'/\phi^2 \geq 1/\nu$ from $0$ to $1$, we have
	\[
	-1/\phi(1)+1/\phi(0) \geq 1/\nu.
	\]
	Then $\phi(0) < \nu$, and we have (\ref{eqn:SCB_DF0}).      
\end{proof}
Instead of formulating other properties of SCB, 
we directly describe the path-following method for 
minimization of an affine function on a compact convex set.
Here an {\em affine function} is a function $\ell$ such that 
it is an affine function on every geodesic $\gamma$, i.e., $(\ell \circ \gamma)''(t) = 0$.
In a general manifold, 
nonconstant affine functions are very rare. 
The main situation we think of is the minimization of 
$(x,y) \mapsto c^{\top}y$ over $C \subseteq M \times \RR^k$; see Example~\ref{ex:MEB}.

Let $Q \subseteq M$ be a compact convex set such that its interior $Q^{\circ}$ is nonempty.
For an affine function $\ell:Q \to \RR$, consider the following optimization problem:
\begin{equation}
	\mbox{Min.} \quad \ell(x) \quad {\rm s.t.}\quad x \in Q.
\end{equation}
We can assume that $\ell$ is not a constant function; then $D\ell_x \neq 0$ for all $x \in Q^\circ$.
Let $\ell^*$ denote the minimum of $\ell(x)$ over $x \in Q$.

Let $F:Q^{\circ} \to \RR$ be a $\nu$-SCB of $Q$. Consider the parametric problem with $t \geq 0$:
\begin{equation}
	\mbox{Min.} \quad f_t(x) := t \ell(x) + F(x)  \quad {\rm s.t.}\quad x \in Q^{\circ}.
\end{equation}
Let $x^*(t)$ denote the minimizer of $f_t$.
As in Euclidean case, 
the curve $t \mapsto x^*(t)$ is called the {\em central path}, 
and the starting point $x^*(0)$, the minimizer of $F$, is called the {\em analytic center} 
of $Q$ (relative to $F$). 
The {\em path-following method} can be formulated as in \cite[Section 4.2.4]{Nesterov04}:
\begin{description}
	\item[Path-following method] 
	\item[Input:] A point $x_0 \in Q^{\circ}$ with $\lambda_F(x_0) \leq \beta$, where $0 < \beta < \frac{\sqrt{\beta}}{1+\sqrt{\beta}}$; say $\beta := 1/9$.
	\item[0:] $(x,t) \leftarrow (x_0,0)$.
	\item[1:] $t^+ := t  + \alpha /\|D\ell_x\|^*_{F,x}$, 
	where $\alpha := \frac{\sqrt{\beta}}{1+ \sqrt{\beta}}- \beta >0$.
	\item[2:] $x^+ := \exp_x \overline{u}$, where $\overline{u}$ is the Newton direction of $f_{t^+}$ at $x$.
	\item [3:] $(x,t) \leftarrow (x^+,t^+)$; go to {\bf 1}.
\end{description}
The initial point $x_0$ can be obtained 
by the damped Newton method.
%
The polynomial iteration complexity is obtained as in the Euclidean case:
\begin{Thm} Suppose that $\|(Df_t)_x\|^*_{F,x} \leq \beta$. Then it holds
	\begin{itemize}
		\item[(1)]  $\|(Df_{t^+})_{x^+}\|^*_{F,x^+} \leq \beta$.
	\end{itemize}
In addition, if $t > 0$, then it holds:
\begin{itemize}
		\item[(2)] $\|D\ell_x\|^*_{F,x} \leq \frac{1}{t} (\beta+ \sqrt{\nu})$; then $t^+ \geq \left(1+ \frac{\gamma}{\beta+ \sqrt{\nu}}\right)t$.
		\item[(3)] $\ell(x) - \ell^* \leq \frac{1}{t}\left( \nu+ \frac{(\beta+ \sqrt{\nu})\beta}{1-\beta}\right)$.
	\end{itemize}
Thus, given  $\epsilon > 0$, 
the path-following method produces
a point $x \in Q^\circ$ with $\ell(x)- \ell^* \leq \epsilon$ in
$O(\sqrt{\nu} \log \nu \|D\ell_{x_0}\|^*_{F,x_0}/\epsilon)$ 
iterations.
\end{Thm}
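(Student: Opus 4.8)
\emph{Proof plan.} The plan is to transcribe, essentially line for line, Nesterov's analysis of the short--step path--following method \cite[Section~4.2.4]{Nesterov04}, exploiting two features of the present setting. Since $\ell$ is affine, $H\ell\equiv 0$, hence $\nabla D\ell\equiv 0$: the $1$--form $D\ell$ is parallel and $Hf_t=HF$ for every $t\ge 0$. Consequently each $f_t$ is a $1$--SC function whose Hessian metric is $\langle\cdot,\cdot\rangle_F$, so $\lambda_{f_t}(x)=\|(Df_t)_x\|^{*}_{F,x}$ and the update in Step~2 is precisely the Newton update controlled by Theorem~\ref{thm:Newton} with $\sigma=1$. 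I also record the bound $\lambda_F(x)=\|(DF)_x\|^{*}_{F,x}\le\sqrt{\nu}$ for $x\in Q^{\circ}$, immediate from the barrier inequality (\ref{eqn:SCB}) and the definition of the dual norm, and the fact that $x^{*}(t):=\argmin f_t$ exists, is unique, and satisfies $t\,D\ell_{x^{*}(t)}+DF_{x^{*}(t)}=0$ (because $Q$ is compact, $F$ is closed and hence blows up at $\partial Q$, and $f_t$ is nondegenerate).

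For (1): writing $(Df_{t^+})_x=(Df_t)_x+(t^+-t)\,D\ell_x$ and using the triangle inequality for $\|\cdot\|^{*}_{F,x}$ together with the step rule $t^+-t=\alpha/\|D\ell_x\|^{*}_{F,x}$, we get $\lambda_{f_{t^+}}(x)\le\lambda_{f_t}(x)+\alpha\le\beta+\alpha=\frac{\sqrt{\beta}}{1+\sqrt{\beta}}$. For $\beta=1/9$ this equals $1/4<(3-\sqrt{5})/2$, so $x$ lies in the region of quadratic convergence of the $1$--SC function $f_{t^+}$; Theorem~\ref{thm:Newton} then gives $x^+\in Q^{\circ}$ and $\lambda_{f_{t^+}}(x^+)\le(\beta+\alpha)^2/(1-\beta-\alpha)^2$. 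Since $(\beta+\alpha)/(1-\beta-\alpha)=\sqrt{\beta}$, the right--hand side is exactly $\beta$, which proves (1) (and is precisely the inductive invariant propagated along the iterations).

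For (2): from $t\,D\ell_x=(Df_t)_x-DF_x$ and the triangle inequality, $t\,\|D\ell_x\|^{*}_{F,x}\le\lambda_{f_t}(x)+\lambda_F(x)\le\beta+\sqrt{\nu}$; substituting into Step~1, $t^+=t+\alpha/\|D\ell_x\|^{*}_{F,x}\ge t\bigl(1+\alpha/(\beta+\sqrt{\nu})\bigr)$ (here $\alpha$ is the constant $\tfrac{\sqrt\beta}{1+\sqrt\beta}-\beta$ of Step~1, denoted $\gamma$ in the statement). For (3): split $\ell(x)-\ell^{*}=\bigl(\ell(x)-\ell(x^{*}(t))\bigr)+\bigl(\ell(x^{*}(t))-\ell^{*}\bigr)$. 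With $z\in Q$ an $\ell$--minimizer and $\gamma$ a geodesic from $x^{*}(t)$ to $z$ in $Q$, affinity of $\ell$ gives $\ell(x^{*}(t))-\ell^{*}=-D\ell_{x^{*}(t)}(\dot\gamma(0))=\tfrac1t DF_{x^{*}(t)}(\dot\gamma(0))\le\nu/t$ by Proposition~\ref{prop:SCB1}, applied to the truncations $\gamma|_{[0,s]}$ with $s<1$ and then $s\to 1$ when $z\in\partial Q$. For the other summand, let $\zeta$ be a geodesic from $x$ to $x^{*}(t)$ in $C$ and set $r:=\|\dot\zeta(0)\|_{F,x}$; applying (\ref{eqn:WSC2_Df=>}) to $\zeta$ (valid since $f_t$ is $1$--WSC) and using $(Df_t)_{x^{*}(t)}=0$ yields $-(Df_t)_x(\dot\zeta(0))\ge r^2/(1+r)$, whereas Cauchy--Schwarz (\ref{eqn:CM}) gives $-(Df_t)_x(\dot\zeta(0))\le\lambda_{f_t}(x)\,r\le\beta r$; hence $r\le\beta/(1-\beta)$. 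Then, again by (\ref{eqn:CM}) and by (2), $\ell(x)-\ell(x^{*}(t))=-D\ell_x(\dot\zeta(0))\le\|D\ell_x\|^{*}_{F,x}\,r\le\tfrac1t(\beta+\sqrt{\nu})\cdot\tfrac{\beta}{1-\beta}$, and adding the two estimates gives (3).

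For the iteration count, start with $t_0=0$ and $t_1=\alpha/\|D\ell_{x_0}\|^{*}_{F,x_0}$; by (2), $t_k\ge\bigl(1+\alpha/(\beta+\sqrt{\nu})\bigr)^{k-1}t_1$, while by (3) it suffices to reach $t_k\ge\epsilon^{-1}\bigl(\nu+(\beta+\sqrt{\nu})\beta/(1-\beta)\bigr)=O(\nu/\epsilon)$; since $\log\bigl(1+\alpha/(\beta+\sqrt{\nu})\bigr)=\Theta(1/\sqrt{\nu})$ as $\nu\to\infty$, this occurs within $O\bigl(\sqrt{\nu}\,\log(\nu\,\|D\ell_{x_0}\|^{*}_{F,x_0}/\epsilon)\bigr)$ iterations. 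The only places where the manifold (rather than Euclidean) nature demands attention are the well--definedness of $x^{*}(t)$ and the requirement that the geodesics used — from $x$ to $x^{*}(t)$ and from $x^{*}(t)$ to the boundary minimizer $z$ — lie in $C$ and $Q$ respectively, so that Propositions~\ref{prop:WSC2} and~\ref{prop:SCB1} apply; this is where I expect the main (though modest) obstacle, and it is dispatched by geodesic convexity of $Q$ and surjectivity of the exponential maps. Everything else is the Euclidean argument verbatim.
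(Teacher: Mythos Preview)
Your proposal is correct and follows essentially the same route as the paper's own proof: the same triangle-inequality step for (1) followed by Theorem~\ref{thm:Newton}, the same decomposition $tD\ell_x=(Df_t)_x-DF_x$ for (2), and the same two-term split of $\ell(x)-\ell^*$ for (3), bounding the centre--to--optimum piece via Proposition~\ref{prop:SCB1} and the current--to--centre piece via (\ref{eqn:WSC2_Df=>}) combined with Cauchy--Schwarz to get $\|\dot\zeta(0)\|_{F,x}\le\beta/(1-\beta)$. Your added remarks (that $Hf_t=HF$ since $\ell$ is affine, and the truncation $s\to 1$ when the minimizer lies on $\partial Q$) are welcome clarifications the paper leaves implicit.
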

\begin{proof}
		(1).  Let $\lambda_0 := \| (Df_t)_x \|_{F,x}^{*} \leq \beta$, 
	$\lambda_1 :=  \| (Df_{t^+})_x \|_{F,x}^{*}$ and 	
	$\lambda^+ :=  \| (Df_{t^+})_{x^+} \|_{F,{x^+}}^{*}$.
	Then 
	\[
	\lambda_1 =  \| \alpha D\ell_x/\|D\ell_x \|_{F,x}^* +(Df_t)_x\|_{F,x}^{*} \leq \alpha + \|(Df_t)_x\|_{F,x}^{*} \leq \alpha + \beta = 
	\frac{\sqrt{\beta}}{1+\sqrt{\beta}} < 1.
	\]
	Here $f_t$ is $1$-SC for $t \geq 0$. 
	By Theorem~\ref{thm:Newton} (with $\| (Df_{t^+})_x\|^*_{f_{t^+},x} = \lambda_1 < 1$), it holds
	\[
	\lambda^+ \leq \left(\frac{\lambda_1}{1- \lambda_1}\right)^2 \leq \left(\frac{\beta+\alpha}{1-\beta -\alpha}\right)^2 = \beta.
	\]
	
	(2). $\|D\ell_x\|^*_{F,x} = \frac{1}{t}\|(Df_t)_x - DF_x \|^*_{F,x} 
	\leq \frac{1}{t}(\|(Df_t)_x\|_{F,x}^* + \|DF_x \|^*_{F,x}) \leq \frac{1}{t} (\beta + \sqrt{\nu})$.
	
	(3). Let $x^*$ be a minimizer of $\ell$ over $Q$, 
	and $x^*(t)$ be the unique minimizer of $f_t$ over $Q^\circ$.
	By the assumption of $(M,\nabla)$, 
	we can choose a geodesic $\gamma:[0,1] \to Q$ from $x^*(t)$ to $x^*$.
	Since $\ell$ is an affine function and $(Df_t)_{x^*(t)} = t D\ell_{x^*(t)} + DF_{x^*(t)} = 0$, we have 
	\begin{equation}\label{eqn:add1}
			\ell(x^*(t)) - \ell(x^*) = - D\ell_{x^*(t)}(\dot \gamma(0)) = \frac{1}{t} DF_{x^*(t)}(\dot \gamma(0)) \leq \frac{\nu}{t},
	\end{equation} 
     where the last inequality follows from Proposition~\ref{prop:SCB1}. 
    Also, for geodesic $c: [0,1] \to Q$ connecting $c(0) = x$ and $c(1) = x^*(t)$, we have 
	 \begin{equation}\label{eqn:add2}
	 	t (\ell(x) - \ell(x^{*}(t))) =  - t D\ell_{x}(\dot c(0)) \leq t\|D\ell_x\|^*_{F,x}\|\dot c(0)\|_{F,x}  \leq (\beta+ \sqrt{\nu})\frac{\beta}{1- \beta}. 
	 \end{equation}
For the last inequality, we use $\|\dot c(0)\|_{F,x} \leq \beta/(1-\beta)$, 
which can be seen as follows. By (\ref{eqn:WSC2_Df=>}) with $(Df_t)_{x^*(t)} = 0$ and Cauchy-Schwarz (\ref{eqn:CM}), it holds
\[
\frac{\|\dot c(0)\|^2_{F,x}}{1+ \|\dot c(0)\|_{F,x}} \leq - (Df_t)_{x}(\dot c(0)) \leq \|(Df_t)_{x}\|^*_{F,x} \|\dot c(0)\|_{F,x} \leq \beta \|\dot c(0)\|_{F,x}. 
\]
By (\ref{eqn:add1}) and (\ref{eqn:add2}), 
we have (1).  
\end{proof}
%

We finally examine construction of the logarithmic barrier.
For a convex function $f: M \to \RR$ 
and $\beta \in \RR$, the level set $L_{f, \beta} \subseteq M$ is defined by
\begin{equation}
	L_{f,\beta} := \{ x \in M \mid f(x) \leq \beta \}.
\end{equation}
Denote by $L_{f,\beta}^{\circ}$ its interior.
Define the logarithmic barrier $F_{\beta}: L_{f,\beta}^{\circ} \to \RR$ 
by 
\begin{equation}
	F_{\beta}(x) := - \log (\beta - f(x)) \quad (x \in L_{f,\beta}^{\circ}).
\end{equation}
\begin{Lem}
	$F= F_{\beta}$ is closed convex, and satisfies
	\begin{equation}\label{eqn:SCB1}
		DF(u)^2 \leq HF(u,u) \quad (u \in T_x, x \in L_{f,\beta}^{\circ}).
	\end{equation}
\end{Lem}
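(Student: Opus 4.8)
The strategy is to compute $DF$ and $HF$ directly from the definition $F(x) = -\log(\beta - f(x))$, treating $\beta - f$ as the relevant scalar. Write $g := \beta - f$, so $g > 0$ on $L_{f,\beta}^\circ$ and $F = -\log g$. For a vector field $X$ we have $DF(X) = -\,Dg(X)/g = Df(X)/g$, using $Dg = -Df$. For the Hessian, recall $HF(X,Y) = X(DF(Y)) - DF(\nabla_X Y)$. Differentiating $DF(Y) = Df(Y)/g$ by $X$ via the product/quotient rule gives $X(DF(Y)) = \bigl(X(Df(Y)) \cdot g + Df(Y)\,Df(X)\bigr)/g^2$, where I used $X(g) = -Df(X)$; and $DF(\nabla_X Y) = Df(\nabla_X Y)/g$. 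Combining, and recognizing $X(Df(Y)) - Df(\nabla_X Y) = Hf(X,Y)$, I obtain the clean formula
\begin{equation*}
	HF(X,Y) = \frac{Hf(X,Y)}{g} + \frac{Df(X)\,Df(Y)}{g^2}.
\end{equation*}

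Next I would verify closedness and convexity. Convexity follows because $g > 0$ and $Hf \succeq 0$ (as $f$ is convex, by the Hessian characterization of convexity stated earlier), so both terms in the displayed formula for $HF(u,u)$ are nonnegative; hence $HF_x \succeq 0$ for all $x$, and actually positive definite once we note $Df_x \ne 0$ or $Hf_x$ is positive semidefinite with the rank-one correction — more simply, $-\log$ composed with the concave positive function $g$ along any geodesic is convex by the one-dimensional chain rule, which also handles closedness: the epigraph is closed because $F(x) \to +\infty$ as $x$ approaches the boundary of $L_{f,\beta}^\circ$ (where $g \to 0^+$) and $f$, hence $g$, is continuous on the interior.

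Finally, the barrier inequality (\ref{eqn:SCB1}) is immediate from the two formulas: for $u \in T_x$,
\begin{equation*}
	DF(u)^2 = \frac{Df(u)^2}{g^2} \leq \frac{Hf(u,u)}{g} + \frac{Df(u)^2}{g^2} = HF(u,u),
\end{equation*}
where the inequality uses $Hf(u,u)/g \geq 0$. I do not expect a genuine obstacle here; the only point requiring a little care is making sure the quotient-rule computation of the Hessian correctly uses $X(g) = Dg(X) = -Df(X)$ and that the connection terms $Df(\nabla_X Y)$ cancel appropriately so that $Hf$ emerges — but this is exactly the same bookkeeping as in the Euclidean logarithmic barrier calculation, and nothing about the manifold setting changes it, since the computation is pointwise and only uses the $(0,2)$-tensor identity $HF(X,Y) = X(DF(Y)) - DF(\nabla_X Y)$ together with the corresponding one for $Hf$.
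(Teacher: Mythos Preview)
Your proposal is correct and follows essentially the same approach as the paper: setting $\omega = g = \beta - f(x)$, deriving the formulas $DF(u) = Df(u)/\omega$ and $HF(u,u) = Hf(u,u)/\omega + Df(u)^2/\omega^2$, and then reading off both convexity and the inequality $DF(u)^2 \leq HF(u,u)$ from the nonnegativity of $Hf(u,u)/\omega$. The only difference is that you spell out the covariant-derivative bookkeeping for $HF$ and address closedness explicitly, whereas the paper simply states the two formulas and the consequences.
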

\begin{proof}
   Let $\omega  := \beta - f(x) > 0$. Then we have
	\begin{equation}\label{eqn:DF_HF}
	DF(u) = \frac{Df(u)}{\omega},\quad 	HF(u,u) = \frac{Hf(u,u)}{\omega} + \frac{Df(u)^2}{\omega^2}.
	\end{equation}
	Then $HF(u,u) \geq 0$ and hence $F$ is convex.
	Also $HF(u,u) \geq DF(u)^2$ holds.
\end{proof}
If a $\sigma$-SC function $F$ for $\sigma > 0$ satisfies (\ref{eqn:SCB1}),  then $\sigma^2 F$ is a $\sigma^2$-SCB.
The following is a correspondent of \cite[Theorem 5.1.4]{Nesterov18}. 
\begin{Prop}\label{prop:F_beta}
	Suppose that $f:M \to \RR$ is $\sigma$-SC.
	Then $F_{\beta}: L_{f,\beta}^{\circ} \to \RR$ 
	is $\tilde \sigma$-SC for
	\[
	\tilde \sigma := \sqrt{(\sigma (\beta - f^*)^{1/2} + 1)^2+ 9/4},
	\]
	where $f^* := \inf_{x \in M} f (x)$.  In particular, 
	$L_{f,\beta}$ admits an $O(\sigma^2 (\beta -f^*))$-SCB.
\end{Prop}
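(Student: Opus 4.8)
The plan is to verify the self-concordance inequality (\ref{eqn:SC}) for $F = F_\beta$ directly, by computing $\nabla_v HF(u,u)$ in terms of the derivatives of $f$ and then bounding it using the $\sigma$-SC property of $f$ together with the scalar calculus inequality for $\omega := \beta - f(x)$ already encoded in (\ref{eqn:DF_HF}). First I would differentiate the formula $HF(u,u) = Hf(u,u)/\omega + Df(u)^2/\omega^2$ covariantly along a geodesic: writing $a := Df(u)/\omega$ and $b := Hf(u,u)/\omega$ (so $HF(u,u) = b + a^2$ with $HF(v,v) = \|v\|^2/\omega \cdot(\cdots)$ by the same identity applied to $v$), I expect
\[
\nabla_v HF(u,u) = \frac{\nabla_v Hf(u,u)}{\omega} + \frac{3 Df(v) Hf(u,u)}{\omega^2} + \frac{2 Df(u)\, Hf(u,v)}{\omega^2} + \frac{2 Df(v) Df(u)^2}{\omega^3},
\]
using that $\nabla_v \omega = -Df(v)$ and that the Hessian $HF$ is built from the second covariant derivative of the scalar $-\log\omega$. (This is the manifold analogue of the third-derivative computation in the proof of \cite[Theorem 5.1.4]{Nesterov18}; the $Hf(u,v)$ cross term is the one place the symmetry of $\nabla_X Hf(Y,Z)$ in its last two slots — noted after Proposition in the excerpt — is needed.)

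Next I would introduce the normalized quantities $\xi := Hf(u,u)^{1/2}/\omega^{1/2}\cdot\omega^{-1/2}$ — more precisely set $\tau := Df(u)/(\omega\, HF(u,u)^{1/2})$, $\rho := Df(v)/(\omega\, HF(v,v)^{1/2})$, and express the ratio $|\nabla_v HF(u,u)|/(2\, HF(v,v)^{1/2} HF(u,u))$ as a function of the dimensionless ratios $\tau,\rho$, of $r := \sigma(\beta - f^*)^{1/2}$ (bounding $\sigma Hf(v,v)^{1/2} \le \sigma\, HF(v,v)^{1/2}\,\omega^{1/2}$ and $\omega \le \beta - f^*$), and of the Cauchy–Schwarz-controlled cross term $|Hf(u,v)| \le Hf(u,u)^{1/2}Hf(v,v)^{1/2} \le HF(u,u)^{1/2}HF(v,v)^{1/2}\omega$. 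One checks $|\tau| \le 1$ and $|\rho| \le 1$ from (\ref{eqn:DF_HF}) (since $Df(u)^2/\omega^2 \le HF(u,u)$, which is literally (\ref{eqn:SCB1})). Feeding the $\sigma$-SC bound $|\nabla_v Hf(u,u)| \le 2\sigma Hf(v,v)^{1/2}Hf(u,u) \le 2\sigma\, HF(v,v)^{1/2}HF(u,u)\,\omega^{3/2}$ into the displayed expansion and dividing through, the four terms become, respectively, $\le r$, $= 3|\rho|\cdot(\text{something} \le 1)$ collapsing to the $3/2$ contribution, a term $\le |\tau|$, and a term $\le |\tau|^2|\rho|$; summing and maximizing over $|\tau|,|\rho|\le 1$ should yield the bound $\le \sqrt{(r+1)^2 + 9/4} = \tilde\sigma$ exactly as in the Euclidean case. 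The final sentence of the proposition then follows by the remark immediately preceding the statement: $F_\beta$ is $\tilde\sigma$-SC and satisfies (\ref{eqn:SCB1}), so $\tilde\sigma^2 F_\beta$ is a $\tilde\sigma^2$-SCB, and $\tilde\sigma^2 = (\sigma(\beta-f^*)^{1/2}+1)^2 + 9/4 = O(\sigma^2(\beta - f^*))$.

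The main obstacle I anticipate is getting the covariant-differentiation bookkeeping right in the first display: one must be careful that $HF$ genuinely equals the covariant Hessian of $-\log(\beta - f)$ (it does, since the chain/product rule for $\nabla$ acting on scalars coincides with ordinary calculus, as $\nabla_X g = Dg(X)$ for scalars $g$), and that the third term carrying $Hf(u,v)$ is symmetric so that no curvature correction $R$ appears — this is exactly the point flagged after Proposition~\ref{prop:SC}, and it is why one wants the stronger SC condition (\ref{eqn:SC}) rather than WSC here. Once the expansion is established the remaining optimization over $|\tau|,|\rho| \le 1$ is the routine scalar calculation from \cite[Theorem 5.1.4]{Nesterov18} and needs only to be transcribed. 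A secondary subtlety is that $F_\beta$ must be checked to be closed and nondegenerate: closedness is immediate from $F_\beta \to \infty$ at the boundary of $L_{f,\beta}^\circ$ and lower semicontinuity of $f$, and nondegeneracy of $HF$ follows from nondegeneracy of $Hf$ via (\ref{eqn:DF_HF}), since $HF(u,u) \ge Hf(u,u)/\omega > 0$ for $u \ne 0$.
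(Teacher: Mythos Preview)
Your overall strategy --- compute $\nabla_v HF(u,u)$ explicitly, bound it via the $\sigma$-SC inequality for $f$, then optimize --- is exactly the paper's. But there are two concrete problems in the execution.

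\medskip
\textbf{A bookkeeping slip.} In your displayed formula for $\nabla_v HF(u,u)$ the second coefficient should be $1$, not $3$:
\[
\nabla_v HF(u,u) = \frac{\nabla_v Hf(u,u)}{\omega} + \frac{Df(v)\,Hf(u,u)}{\omega^2} + \frac{2\,Df(u)\,Hf(u,v)}{\omega^2} + \frac{2\,Df(v)\,Df(u)^2}{\omega^3}.
\]
(The coefficient $3$ arises only after specializing to $u=v$, when the second and third terms combine.) This matters for the constant.

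\medskip
\textbf{The real gap: the optimization step.} Your plan is to bound the four terms separately by $r$, $\tfrac{3}{2}|\rho|\cdot(\ldots)$, $|\tau|$, $|\tau|^2|\rho|$ and then ``sum and maximize over $|\tau|,|\rho|\le 1$.'' That cannot produce the square-root form $\sqrt{(r+1)^2+9/4}$: a termwise triangle-inequality bound gives at best $r+5/2$ (with the correct coefficient $1$), which is strictly larger for every $r\ge 0$. The paper instead keeps the \emph{joint} structure of $HF$: setting
\[
\tau:=\Bigl(\tfrac{Hf(u,u)}{\omega}\Bigr)^{1/2},\quad \xi:=\tfrac{|Df(u)|}{\omega},\quad
\tau_1:=\Bigl(\tfrac{Hf(v,v)}{\omega}\Bigr)^{1/2},\quad \xi_1:=\tfrac{|Df(v)|}{\omega},
\]
one has $HF(u,u)=\tau^2+\xi^2$ and $HF(v,v)=\tau_1^2+\xi_1^2$, and the ratio to be bounded becomes
\[
\frac{\sigma\omega^{1/2}\,\tau_1\tau^2+\tfrac12\,\xi_1\tau^2+\xi\,\tau_1\tau+\xi_1\xi^2}{(\tau_1^2+\xi_1^2)^{1/2}(\tau^2+\xi^2)}.
\]
Now one maximizes, first linearly in $(\tau_1,\xi_1)$ over the unit circle, then in $(\tau,\xi)$ over the unit circle --- exactly the device used in the proof of Lemma~\ref{lem:sum} --- to arrive at $\sqrt{(\sigma\omega^{1/2}+1)^2+9/4}\le \tilde\sigma$. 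Your variables $\rho,\tau$ are $\xi_1/(\tau_1^2+\xi_1^2)^{1/2}$ and $\xi/(\tau^2+\xi^2)^{1/2}$; using only $|\rho|,|\tau|\le 1$ throws away the circle constraints and loses the square root.

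\medskip
A related misconception: this is \emph{not} ``exactly as in the Euclidean case.'' The Euclidean/WSC argument (where $u=v$ suffices) yields the smaller constant $\sqrt{\sigma^2(\beta-f^*)+1}$; the present SC case needs the two-variable circle optimization above and produces the larger $\tilde\sigma$ in the statement. Your remarks on closedness, nondegeneracy, and the deduction of the $O(\sigma^2(\beta-f^*))$-SCB from (\ref{eqn:SCB1}) are correct.
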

In the case of WSC,  
the constant $\tilde\sigma$ can be taken as $\sqrt{\sigma^2(\beta-f^*)+1}$, 
since precisely the same proof of \cite[Theorem 5.1.4]{Nesterov18} works.
For the SC case, as in the proof of Lemma~\ref{lem:sum}, a little modification is required, 
which causes the increase of the constant.
\begin{proof}
From (\ref{eqn:DF_HF}), $Df(u)^2 = (Df \otimes Df) (u,u)$, 
and $\nabla_v  (Df \otimes Df) (u,u) = (\nabla_v Df \otimes Df + Df \otimes \nabla_v Df)(u,u) = 2 Df(u) Hf(u,v)$ (see \cite[II. Proposition 1.3 (1)]{Sakai96}), 
the covariant derivative of $HF$ is given by
\begin{equation}\label{eqn:nablaHF}
\nabla_v HF(u,u) = \frac{\nabla_v Hf(u,u)}{\omega} + \frac{Df(v)Hf(u,u)}{\omega^2} + \frac{2Df(u) Hf(u,v) }{\omega^2} + \frac{2Df(v)Df(u)^2}{\omega^3}.
\end{equation}
Hence we have
	\begin{eqnarray*}
		|\nabla_v HF(u,u)|  &\leq&  \frac{2\sigma Hf(v,v)^{1/2} Hf(u,u)}{\omega} + \frac{|Df(v)| Hf(u,u) }{\omega^2} \\ 
		&&  +\frac{2|Df(u)| Hf(v,v)^{1/2} Hf(u,u)^{1/2} }{\omega^2} 
		+  \frac{2|Df(v)| Df(u)^2 }{\omega^3}.
	\end{eqnarray*}
Define $\tau_1,\tau, \xi_1, \xi$ by
\begin{equation*}
	\tau_1 := (Hf(v,v)/\omega)^{1/2},\ \tau := (Hf(u,u)/\omega)^{1/2},\ \xi_1 := |Df(v)|/\omega,\ \xi := |Df(u)|/\omega.
\end{equation*} 
Then we have
\begin{equation}\label{eqn:upperbound}
\frac{|\nabla_v HF(u,u)|}{2 HF(v,v)^{1/2}HF(u,u)} \leq 
\frac{\sigma \omega^{1/2} \tau_1 \tau^2 + (1/2)\xi_1 \tau^2 + \xi \tau_1 \tau + \xi_1 \xi^2}{(\tau_1^2 + \xi_1^2)^{1/2}(\tau^2 + \xi^2)}.
\end{equation}
We bound the RHS.
By homogeneity,
we may consider the optimization problem:
\[
\mbox{Max.} \quad \sigma \omega^{1/2} \tau_1 \tau^2 + (1/2) \xi_1 \tau^2+ \xi \tau_1 \tau + \xi_1 \xi^2	\quad \mbox{s.t.}\quad  \tau_1^2 + \xi_1^2 = 1,\ \tau^2 + \xi^2 = 1.
\]
Fixing $(\tau, \xi)$, 
optimize $(\tau_1,\xi_1)$, which is a linear optimization over the unit circle. 
The optimum is $(\sigma \omega^{1/2} \tau^2 + \xi \tau, (1/2)\tau^2+ \xi^2)/\sqrt{(\sigma \omega^{1/2} \tau^2 + \xi \tau)^2+ ((1/2)\tau^2 + \xi^2)^2}$. 
Then we have
\[
	\mbox{Max.} \quad \sqrt{(\sigma \omega^{1/2} \tau^2 + \xi \tau)^2 + ((1/2)\tau^2+\xi^2)^2}
	\quad \mbox{s.t.} \quad \tau^2 + \xi^2 = 1.
\]
Therefore, the RHS of (\ref{eqn:upperbound}) is at most $\sqrt{(\sigma\omega^{1/2} + 1)^2+ ((1/2)+1)^2}$.
\end{proof}

\section{Squared distance function}\label{sec:distance}
Let $M$ be a Riemannian manifold,  where the inner product and norm on $T_x$ are denoted by
$\langle, \rangle_x$ and $\|\cdot \|_x$, respectively. 
The length of a curve $\gamma:[a,b] \to M$ is defined 
as $\int_{a}^{b} \|\dot \gamma(t)\|_{\gamma(t)} dt$.
The distance $d(x,y)$ between two points $x,y$ is defined as 
the infimum of the length of a curve connecting $x,y$.
We consider the Levi-Civita connection $\nabla$.
Then a minimum-length curve is a geodesic with respect to $\nabla$.
For a $2$-dimensional subspace $F$ in $T_x$,
the {\it sectional curvature} of $F$ is defined 
as $\langle R(u,v)v, u \rangle_x$ where $\{u,v\}$ 
is an orthonormal basis of $F$.

Suppose that $M$ is an {\em Hadamard manifold}, i.e., it is complete, simply-connected, 
and has nonpositive sectional curvature for every tangent plane at every point. 
Then  
a geodesic connecting any two points is uniquely determined, 
up to the choice of an affine parameter.
The exponential map $u \mapsto \exp_x u$ gives a diffeomorphism from $T_x$ to $M$.
The distance $d(x,y)$ coincides with the length of the unique geodesic between $x$ and $y$.
See \cite[VI. Theorem 4.1]{Sakai96}.

Fix an arbitrary point $p \in M$. 
Define the distance and squared distance functions $d_p, f_p: M \to \RR$ by 
\[
	d_p(x) := d(p,x), \quad f_p(x) := \frac{1}{2} d(p,x)^2 \quad (x \in M).
\]
It is known \cite[Lemma 12.15]{Lee18} 
that $f_p$ is smooth and $1$-strongly convex 
(i.e.,$Hf_p(u,u) \geq \|u\|^2$ for $u \in T_x, x\in M$), and hence nondegenerate.
Note that $d_p$ is also convex but nondifferentiable at $p$ and degenerate.

For exploring a nontrivial class of SC-functions, 
the first trial might be to examine whether $f_p$ is SC or not.
In this section, we show that $f_p$ is SC for hyperbolic spaces. 
A {\em hyperbolic space} of curvature $-\kappa < 0$ is an Hadamard manifold 
such that every sectional curvature of every point is constant $- \kappa$.
The main result in this section is the following.
\begin{Thm}\label{thm:SC_hyperbolic}
	Suppose that $M$ is a hyperbolic space with curvature $- \kappa < 0$, and let $p \in M$.
	\begin{itemize}
		\item[(1)] $f_p$ is $\sqrt{\kappa}/2$-SC, and is not $(\sqrt{\kappa}/2- \epsilon)$-SC for every small
		$\epsilon > 0$.
		\item[(2)] $f_p$ is $\sqrt{4\kappa/27}$-WSC, and is not $(\sqrt{4\kappa/27}- \epsilon)$-WSC for every small
		$\epsilon > 0$.
\end{itemize}
\end{Thm}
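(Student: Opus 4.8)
The plan is to reduce everything to an explicit one-variable computation along geodesics emanating from $p$, using the rotational symmetry of hyperbolic space. Fix $p$ and work in geodesic polar coordinates around $p$: a point $x$ at distance $r = d(p,x)$ lies on a unit-speed geodesic from $p$, and the Jacobi field analysis for constant curvature $-\kappa$ gives the metric in the form $dr^2 + \left(\tfrac{1}{\sqrt\kappa}\sinh(\sqrt\kappa\, r)\right)^2 g_{S}$, where $g_S$ is the round metric on the unit sphere of directions. Relative to this, decompose any tangent vector $u \in T_x$ as $u = u_\parallel + u_\perp$, where $u_\parallel$ is the radial component (tangent to the geodesic through $p$ and $x$) and $u_\perp$ is orthogonal to it. The key computational facts I would establish are closed-form expressions for $Hf_p$ and for $\nabla Hf_p$ in terms of $r$ and this decomposition. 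Concretely, I expect
\begin{equation*}
Hf_p(u,u) = \|u_\parallel\|^2 + \sqrt\kappa\, r \coth(\sqrt\kappa\, r)\,\|u_\perp\|^2,
\end{equation*}
which follows from the standard Hessian-comparison computation (the radial direction is an eigendirection with eigenvalue $1$; tangential directions have eigenvalue $\sqrt\kappa\, r\coth(\sqrt\kappa\, r) = \tfrac{d_p \cdot (\text{something})}{\ldots}$ coming from the second fundamental form of geodesic spheres). This already re-proves the $1$-strong convexity quoted from \cite{Lee18}.

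The main work is the third covariant derivative. By Lemma~\ref{lem:nabla_uHf(u,u)} it suffices to compute $(\nabla_u Hf_p)(u,u) = (f_p\circ\gamma)'''(0)$ along an arbitrary geodesic $\gamma$ with $\gamma(0)=x$, $\dot\gamma(0)=u$; since $(\nabla_u Hf_p)(v,w)$ is symmetric in $v,w$ (noted in the excerpt), polarization recovers the full trilinear form from its diagonal, and then one evaluates on pairs of vectors to check the SC inequality \eqref{eqn:SC}. For the diagonal, parametrize everything by the distance $r(t) = d_p(\gamma(t))$ and the angle the geodesic makes with the radial direction; hyperbolic trigonometry (the law of cosines $\cosh(\sqrt\kappa\, r(t)) = \cosh(\sqrt\kappa\, r)\cosh(\sqrt\kappa\, \ell t) - \ldots$ for the side $r(t)$ of the triangle $p,x,\gamma(t)$) gives $f_p\circ\gamma$ explicitly, and I differentiate three times at $t=0$. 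I expect the result to take the form, writing $a := \|u_\parallel\|^2$ and $b := \|u_\perp\|^2$ and $c := \sqrt\kappa\, r$,
\begin{equation*}
(\nabla_u Hf_p)(u,u) = 2\,u_\parallel\!\cdot\!(\text{cross term}) + \sqrt\kappa\,\Big(\text{function of }c\Big)\cdot b \cdot \langle u_\parallel,\cdot\rangle\text{-type terms},
\end{equation*}
with all $\kappa$-dependence entering through the overall factor $\sqrt\kappa$ after the substitution $c = \sqrt\kappa\, r$ — this is the scaling reason the constants are $\sqrt\kappa$ times a universal number. The clean way to track this is to note that scaling the metric by $\kappa$ scales distances by $\sqrt\kappa$ and $f_p$ by $\kappa$, under which \eqref{eqn:SC} is invariant except for one factor of $\sqrt\kappa$; so it is enough to do the case $\kappa=1$ and then the constants $1/2$ and $\sqrt{4/27}$ become $\sqrt\kappa/2$ and $\sqrt{4\kappa/27}$.

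With the formulas in hand, both parts reduce to bounding an explicit ratio. For (1): the SC condition is $\sup |(\nabla_v Hf_p)(u,u)| \le 2\sigma\, Hf_p(v,v)^{1/2} Hf_p(u,u)$; after substituting the closed forms this becomes a two-variable optimization (the radial/tangential split of $u$ and of $v$, plus the distance $r$), which I would handle as in the proof of Lemma~\ref{lem:sum} — reduce to the diagonal trilinear bound $|(\nabla_u Hf_p)(u,u)| \le 2\sigma Hf_p(u,u)^{3/2}\cdot(\text{correction})$ is \emph{not} enough here, so instead use the equivalent symmetric form $|\nabla_u Hf_p(v,w)| \le 2\sigma\,Hf_p(u,u)^{1/2}Hf_p(v,v)^{1/2}Hf_p(w,w)^{1/2}$ and optimize. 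I expect the worst case to be $r\to 0$ (where hyperbolic space looks Euclidean but the third-order term survives because of curvature) OR $r\to\infty$, and a calculus check of a function of one variable $c = \sqrt\kappa r$ pins the constant at $1/2$; sharpness comes from exhibiting the maximizing configuration. For (2), the same reduction applies to \eqref{eqn:WSC}: here we only need the diagonal, so we maximize $|(\nabla_u Hf_p)(u,u)| / (2 Hf_p(u,u)^{3/2})$ over the radial/tangential split of $u$ and over $r$; the appearance of $\sqrt{4/27}$ (note $4/27 = \max_{s\in[0,1]} s(1-s)^2$, the classic cubic-over-three-halves extremal value) strongly suggests the extremal $u$ has both a radial and a tangential component in a fixed ratio, with the worst $r$ again at an endpoint, and I would verify this directly. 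The main obstacle is purely the bookkeeping in the third-derivative computation of $f_p\circ\gamma$ — getting the cross-terms between radial and tangential components right — rather than anything conceptual; once that formula is correct, both optimizations are elementary single-variable calculus, and the sharpness statements follow by plugging in the optimal configurations.
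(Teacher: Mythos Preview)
Your overall strategy---reduce to $\kappa=1$ by scaling, compute $Hf_p$ in radial/tangential decomposition, then optimize an explicit ratio---matches the paper's, and your formula for $Hf_p(u,u)$ is exactly right. The WSC part (2) should go through along your lines: the law-of-cosines computation of $(f_p\circ\gamma)'''(0)$ gives $(\nabla_u Hf_p)(u,u)$, and that is all (2) requires.

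The SC part (1) has a genuine gap. You write that ``polarization recovers the full trilinear form from its diagonal'' because $(\nabla_u Hf_p)(v,w)$ is symmetric in $v,w$. This is precisely the step that fails. Polarization from the full diagonal $T(u,u,u)$ recovers only the \emph{fully symmetrized} trilinear form; it recovers $T$ itself only if $T$ is symmetric in all three arguments. Here $(\nabla_X Hf)(Y,Z)$ is symmetric in $(Y,Z)$ but \emph{not} in $(X,Y)$---by Lemma~\ref{lem:asymetric} the defect is a curvature term $-(R(X,Y)Z)f$, which is nonzero in hyperbolic space. This asymmetry is the entire reason the paper distinguishes SC from WSC (and why the two constants $\tfrac{1}{2}$ and $\sqrt{4/27}$ differ). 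So knowing $(f_p\circ\gamma)'''(0)$ for all geodesics $\gamma$ does not give you $(\nabla_v Hf_p)(u,u)$ for $v\neq u$, and without that you cannot verify \eqref{eqn:SC}.

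The paper closes this gap by computing $(\nabla_v Hf_p)(u,u)$ directly: it differentiates $s\mapsto Hf_{c(s)}(\tau_{c,s}u,\tau_{c,s}u)$ along a geodesic $c$ with $\dot c(0)=v$, using a Jacobi-field argument to evaluate $\nabla_{\dot c(s)}\dot\gamma_s(l)|_{s\to 0}$ (Lemma~\ref{lem:nablaHf_hyperbolic}). You could alternatively salvage your route by polarizing to the symmetric part and then adding back the explicit curvature correction from Lemma~\ref{lem:asymetric} and \eqref{eqn:R(X,Y)Z_kappa}; either way, you need an additional computation beyond the diagonal. Once you have the off-diagonal formula, the optimization is as you describe (the paper parametrizes $u,v,\dot\gamma(l)$ in spherical coordinates and finds the SC supremum at $l\to\infty$ with $v$ purely tangential).
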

Note that 
in the hyperboloid model ($\kappa =  1$), 
the $\sqrt{4/27}$-WSC property of $f_p$ is claimed in 
\cite[Lemma 4]{JMM08} (without proof).
It seems reasonable to conjecture that $f_p$ is 
$\sqrt{\kappa}/2$-SC for any Hadamard manifold having 
curvature lower bound $-\kappa$; it may be proved via comparison theory.

As a consequence of Theorem~\ref{thm:SC_hyperbolic} and Proposition~\ref{prop:SCB1},
any ball in a hyperbolic space admits an SCB.
For $R > 0$, let $B(p;R)^{\circ} := \{ x \in M \mid d(p,x) < R\}$ 
be the open ball with center $p$ and radius $R$. 
let $F_{p,R}: B(p;R)^{\circ} \to \RR$ be defined by
\begin{equation}
	F_{p,R}(x) := - \log \left(R^2- d(p,x)^2\right) \quad (x \in M).
\end{equation}
\begin{Cor}
	Suppose that $M$ is a hyperbolic space with curvature $- \kappa < 0$.
	Then $F_{p,R}$ is $O(\sqrt{\kappa} R)$-SC. 
	In particular, the open ball $B(p;R)^{\circ}$ admits an $O(\kappa R^2)$-SCB.
\end{Cor}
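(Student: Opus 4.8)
The plan is to obtain both assertions as an immediate combination of Theorem~\ref{thm:SC_hyperbolic}(1) and Proposition~\ref{prop:F_beta}, after recognizing $F_{p,R}$ as a constant shift of the logarithmic barrier $F_\beta$ of the squared-distance function $f_p$ at level $\beta = R^2/2$. First I would record the two relevant identifications. Since $f_p(x) = \tfrac12 d(p,x)^2 \le \tfrac{R^2}{2}$ is equivalent to $d(p,x)\le R$, the level set $L_{f_p,R^2/2}$ equals the closed ball $B(p;R)$, so $L_{f_p,R^2/2}^{\circ} = B(p;R)^{\circ}$. Moreover
\[
F_{p,R}(x) = -\log\!\left(R^2 - d(p,x)^2\right) = -\log 2 - \log\!\left(\tfrac{R^2}{2} - f_p(x)\right) = F_{R^2/2}(x) - \log 2,
\]
where $F_{\beta}$ is the logarithmic barrier of $f_p$ as in Proposition~\ref{prop:F_beta}. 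An additive constant alters neither $DF_{p,R}$ nor $HF_{p,R}$ nor the covariant derivative of $HF_{p,R}$, so $F_{p,R}$ is $\sigma$-SC exactly when $F_{R^2/2}$ is.

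Then I would apply the earlier results. By Theorem~\ref{thm:SC_hyperbolic}(1), $f_p$ is $\sigma$-SC with $\sigma = \sqrt{\kappa}/2$, and $f_p^* := \inf_{x\in M} f_p(x) = 0$ (as $f_p\ge 0$ and $f_p(p)=0$). Proposition~\ref{prop:F_beta} then shows that $F_{R^2/2}$, and hence $F_{p,R}$, is $\tilde\sigma$-SC with
\[
\tilde\sigma = \sqrt{\left(\tfrac{\sqrt{\kappa}}{2}\cdot\tfrac{R}{\sqrt 2} + 1\right)^2 + \tfrac94} = O\!\left(1+\sqrt{\kappa}\,R\right),
\]
which is $O(\sqrt{\kappa}\,R)$ once $\kappa R^2 \ge 1$. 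For the second claim, $F_{p,R}$ is a logarithmic barrier, so it satisfies (\ref{eqn:SCB1}); by the remark preceding Proposition~\ref{prop:F_beta}, $\tilde\sigma^2 F_{p,R}$ is then a $\tilde\sigma^2$-SCB of $B(p;R)^{\circ}$, with $\tilde\sigma^2 = O(\kappa R^2)$. Equivalently, one may directly quote the ``in particular'' clause of Proposition~\ref{prop:F_beta}, which already yields an $O(\sigma^2(\beta - f_p^*)) = O(\kappa R^2)$-SCB of $L_{f_p,R^2/2} = B(p;R)$.

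I do not expect any real obstacle: the mathematical content has already been spent in proving Theorem~\ref{thm:SC_hyperbolic}, and what remains is bookkeeping. The only points deserving a word of care are that the additive constant $-\log 2$ is harmless, that $f_p^*=0$, and that the $O(\cdot)$ in the self-concordance parameter hides only an additive $O(1)$, so the clean statement ``$O(\sqrt{\kappa}R)$-SC'' tacitly assumes $\kappa R^2\ge 1$ (otherwise read $O(1+\sqrt{\kappa}R)$), while the barrier-parameter bound $O(\kappa R^2)$ is unconditional.
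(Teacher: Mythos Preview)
Your proposal is correct and matches the paper's intended derivation: the corollary is stated there without proof, as an immediate consequence of Theorem~\ref{thm:SC_hyperbolic} and the logarithmic-barrier construction (the paper's cross-reference to Proposition~\ref{prop:SCB1} is evidently a slip for Proposition~\ref{prop:F_beta}). Your bookkeeping remarks about the constant shift $-\log 2$, $f_p^*=0$, and the hidden additive $O(1)$ in $\tilde\sigma$ are exactly the right details to record; note only that the same additive $O(1)$ persists in $\tilde\sigma^2$, so the barrier parameter is likewise $O(1+\kappa R^2)$ rather than unconditionally $O(\kappa R^2)$.
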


We show that the order $O(\sqrt{\kappa} R)$ cannot be improved. 
Define the function $R \mapsto \sigma_{\kappa} (R)$ as
the infimum of $\sigma$ for which $F_{p,R}$ is $\sigma$-SC.
\begin{Prop}\label{prop:tightness}
		Suppose that $M$ is a hyperbolic space with curvature $- \kappa < 0$.
	   Then $\sigma_{\kappa}(R) = \Omega (\sqrt{\kappa} R)$.
\end{Prop}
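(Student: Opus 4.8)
The plan is to exhibit, for each $R$, a specific point $x$ and tangent vectors $u,v$ at which the self-concordance inequality \eqref{eqn:SC} for $F_{p,R}$ is nearly violated unless $\sigma = \Omega(\sqrt{\kappa}R)$. Working in the hyperbolic space of curvature $-\kappa$, normalize so that $\kappa = 1$ first (the general case follows by the scaling $d \mapsto \sqrt{\kappa}\, d$, under which an $R$-ball becomes a $\sqrt{\kappa}R$-ball and all SC-constants rescale by $\sqrt{\kappa}$, exactly as in Theorem~\ref{thm:SC_hyperbolic}(1)); thus it suffices to show $\sigma_1(R) = \Omega(R)$. The natural test point is on the radial geodesic from $p$: take $x$ with $d(p,x) = r$ for an $r$ close to $R$ (say $r = R - 1$, or $r = R/2$ once $R$ is large), and take $u = v$ to be the \emph{radial} unit direction $\dot\gamma(0)$ pointing away from $p$. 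Along the radial geodesic $\gamma$, the function $F_{p,R}\circ\gamma$ is the explicit one-variable function $t \mapsto -\log(R^2 - t^2)$, so by Lemmas~\ref{lem:Hf(u,u)} and \ref{lem:nabla_uHf(u,u)} the quantity $(\nabla_u HF)(u,u)/HF(u,u)^{3/2}$ is a completely elementary calculation in $r$ and $R$.

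First I would record that derivative: with $g(t) = -\log(R^2-t^2)$ one has $g'(t) = 2t/(R^2-t^2)$, $g''(t) = 2(R^2+t^2)/(R^2-t^2)^2 > 0$, and $g'''(t)$ a similar rational expression whose leading behaviour as $t \to R$ is of order $(R^2-t^2)^{-3}$ times a positive polynomial. Evaluating the ratio $|g'''(r)| / g''(r)^{3/2}$ at, say, $r$ a fixed fraction of $R$ with $R$ large, one checks that it grows like a constant times $R$: the point is that $g''(r) \asymp 1/R^2$ in the bulk while $g'''(r) \asymp 1/R^3$, so $|g'''|/(g'')^{3/2} \asymp (1/R^3)/(1/R^3) \cdot$ (something growing)$\,$— more carefully, near the boundary $R^2 - r^2 = \delta$ small gives $g'' \asymp 1/\delta$, $g''' \asymp R/\delta^2 \cdot$const (the extra $R$ coming from the chain structure $R^2-t^2$), hence $|g'''|/(g'')^{3/2} \asymp R/\sqrt{\delta}$, which one can push to order $R$ by choosing $\delta$ a fixed constant. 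This already shows $F_{p,R}$ is not $(\sigma)$-WSC for any $\sigma = o(R)$, and since SC implies WSC (for the same $\sigma$), it gives $\sigma_1(R) = \Omega(R)$ directly, hence $\sigma_\kappa(R) = \Omega(\sqrt{\kappa}R)$ after undoing the normalization.

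The main subtlety I expect is bookkeeping rather than conceptual: one must be sure the radial geodesic actually realizes $HF\circ\gamma$ and $(\nabla HF)\circ\gamma$ as honest second and third $t$-derivatives of $F_{p,R}\circ\gamma$ — this is exactly the content of Lemmas~\ref{lem:Hf(u,u)} and \ref{lem:nabla_uHf(u,u)} applied to the geodesic $\gamma(t) = \exp_p(t\,\dot\gamma(0))$ with $\dot\gamma(0)$ the unit radial vector, so that $f_p\circ\gamma(t) = \tfrac12 t^2$ and $F_{p,R}\circ\gamma(t) = -\log(R^2 - t^2)$ with no hyperbolic-trigonometric corrections along this particular curve. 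The one place to be careful is the choice of where to evaluate: too close to the boundary and $g''$ blows up faster than $g'''$ wants, too far in and both are $\Theta(1/R^2)$-scale and the ratio is $\Theta(R)$ but with a worse constant; I would simply fix $r$ so that $R^2 - r^2$ equals an absolute constant (e.g. $R^2 - r^2 = 1$, requiring $R \geq 1$), compute the ratio exactly, and read off a clean lower bound of the form $c\cdot R$. Finally, I would note this argument is robust — any point on the radial geodesic at bounded distance from the boundary works — and remark that it parallels the tightness half of Theorem~\ref{thm:SC_hyperbolic}(1), the extra factor $R$ over the distance-function bound $\sqrt{\kappa}/2$ being precisely the price of the logarithmic barrier construction in Proposition~\ref{prop:F_beta}.
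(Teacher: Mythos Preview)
Your approach has a genuine gap: along the \emph{radial} geodesic the self-concordance ratio stays bounded, independently of $R$, so this direction cannot witness the $\Omega(R)$ lower bound.

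Concretely, with $g(t)=-\log(R^2-t^2)$ one has
\[
g''(t)=\frac{2(R^2+t^2)}{(R^2-t^2)^2},\qquad
g'''(t)=\frac{4t(3R^2+t^2)}{(R^2-t^2)^3},
\]
so
\[
\frac{|g'''(t)|}{g''(t)^{3/2}}=\frac{\sqrt{2}\,t(3R^2+t^2)}{(R^2+t^2)^{3/2}}
=\sqrt{2}\,\frac{s(3+s^2)}{(1+s^2)^{3/2}}\qquad (s:=t/R),
\]
which is at most $2$ for all $s\in[0,1]$; equivalently, $-\log(R^2-t^2)=-\log(R-t)-\log(R+t)$ is the sum of two $1$-SC one-variable barriers and hence $1$-(W)SC for every $R$. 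Your asymptotic bookkeeping (``$g''\asymp 1/\delta$, $g'''\asymp R/\delta^2$'') is simply incorrect, but even with correct asymptotics the conclusion would be the same: the radial direction sees only the Euclidean part of the geometry, since $f_p\circ\gamma(t)=t^2/2$ there, and the barrier of an interval has bounded SC constant.

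The curvature enters only through \emph{transverse} directions, where $Hf_p$ carries the factor $l\coth l$ (Lemma~\ref{lem:nablaHf_hyperbolic}). The paper's proof therefore evaluates $\sigma_x(u,u)$ at a deliberately non-radial $u$ --- with $\tan^2\theta=\dfrac{(\omega+l^2)}{\omega}\dfrac{\tanh l}{l}$ --- plugs the explicit formulas for $Df$, $Hf$, $\nabla_uHf$ into the barrier expression \eqref{eqn:nablaHF}, and extracts a leading term of order $\dfrac{R^2-l^2}{\sqrt{R^2+l^2}}\asymp R$ (taking $l^2=R^2/2$). To repair your argument you must leave the radial geodesic and use these transverse Hessian formulas; the purely one-variable computation cannot succeed.
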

This may be a pity result.
In the path-following method, 
for guaranteeing the existence of the analytic center, 
we need to add a constraint to make the feasible region bounded.
A natural constraint is the distance constraint $d(p,x) \leq R$.
Then, adding the above barrier increases the barrier parameter 
$\nu$ by $\Omega (\kappa R^2)$, and 
the resulting iteration complexity of the path-following method depends on $\sqrt{\kappa} R$.
 
In the Euclidean case, the logarithmic barrier of $f_p$ provides a $2$-SCB for a ball of arbitrary diameter.
Further, a general result of Nesterov and Nemirovskii~\cite{NN94}
says that every bounded convex set in $\RR^n$ admits an $O(n)$-SCB.
The above result may suggest the nonexistence of such an SCB with constant 
independent of the size of diameter.

We provide one example to which the presented results are applicable.
\begin{Ex}[{Minimum enclosing ball in hyperbolic space~\cite{NH15}}]\label{ex:MEB}
	The {\em minimum enclosing ball problem (MEB)}  on a manifold $M$
	is: 
	Given distinct points $p_1,p_2,\ldots,p_m$ in $M$, find the smallest
	ball including them. 
	MEB is a nonsmooth optimization problem:
	\begin{equation}
		\mbox{Min.} \quad r \quad {\rm s.t.}\quad (x,r) \in M \times \RR,\ d(p_i,x) \leq r \ (i=1,2,\ldots,m).
	\end{equation}
	In the case of Euclidean space $M = \RR^n$, 
    MEB is a well-studied problem in computational geometry, and can be formulated 
    by second-order cone program to which an interior-point method is applicable; see e.g., \cite{KMY03}.
    
	We here assume that $M$ is a $d$-dimensional hyperbolic space with
	curvature $-\kappa$. This case is addressed by Nielsen and Hadjeres~\cite{NH15}. 
   We may further assume $\kappa =1$ (see below).
   Let $r^*$ denote the optimal value of MEB. 
   Let $R := 2 \max_{1 \leq  i<j\leq m} d(p_i, p_j)$. Clearly $r^* < R$. 
   So we may add constraint $r^* \leq R$ to make the feasible region bounded.
   For applying our framework, we consider the following equivalent formulation:
   \begin{equation}\label{eqn:MEB'}
   \mbox{Min.} \quad s \quad {\rm s.t.}\quad (x,s) \in M \times \RR,\ d(p_i,x)^2 \leq s \leq R^2 \ (i=1,2,\ldots,m).
   \end{equation}
   Consider the following barrier 
   \begin{equation}
   F(x,s) := - \sum_{i=1}^m K^2 \log (s - d(p_i,x)^2) - \log (R^2-s),
   \end{equation}
    where $K = \sqrt{2^{-1/2}(R+1)^2+9/4}$. By Theorem~\ref{thm:SC_hyperbolic},
    $(s,x) \mapsto d(p_i,x)^2 - s$ is (degenerate) $2^{-1/2}$-SC. 
     The proof of Proposition~\ref{prop:F_beta} implies that 
    $(s,x) \mapsto \log (s - d(p_i,x)^2)$ satisfies (\ref{eqn:SC}) with 
    $\sigma = K$ if $d(p_i,x)^2 \leq s \leq R^2$. 
    The last term of $F$ is a well-known $1$-SCB.
    With Lemmas~\ref{lem:sum} and \ref{lem:sum_SCB},   
    we conclude that $F$ is an $O(mR^2)$-SCB for the problem (\ref{eqn:MEB'}). 
    
    By the damped Newton method with starting point $(p_i, R^2/2)$ (say),  
    we obtain the next starting point $(x_0,s_0)$ in 
    $O(m R^2 \log R)$ iterations. Given $\epsilon > 0$, 
    by the path-following method with starting point $(x_0,s_0)$ and accuracy 
    $\epsilon' := \epsilon / \min_{1 \leq i < j\leq m} d(p_i,p_j)$, 
    we obtain an additive $\epsilon$-approximation solution $(x,\sqrt{s})$ of MEB with 
    $\sqrt{s} - r^* \leq \epsilon'/(\sqrt{s}+r^*) \leq \epsilon$ in $O(m^{1/2} R \log mR\alpha_0 /\epsilon)$ iterations, where $\alpha_0 := \|(0,1)\|^*_{F,s_0}$.  The total number of Newton iterations is 
    $O(mR^2 \log R + m^{1/2} R \log mR\alpha_0/\epsilon)$. 
    Although the dependence of $R$ is disappointing,
     it seems the first $\log 1/ \epsilon$-dependence algorithm to this problem. 
     As far as we know, the only known algorithm having explicit complexity is an  $O(dm/\epsilon^2)$-time 
     $(1+\epsilon)$-approximation due to Nielsen and Hadjeres~\cite{NH15}. 
\end{Ex}

The rest of this section is devoted to proving Theorem~\ref{thm:SC_hyperbolic}
and Proposition~\ref{prop:tightness}. 
Although there are several models of hyperbolic space 
in which explicit computation can be performed, 
we do not take such an approach 
and take a  ``model-free" approach based on Jacobi fields. 

For a geodesic $\gamma:[0,l] \to M$, 
a {\em Jacobi field} along $\gamma$ 
is a vector field $X = (X(t))_{t \in [0,l]}$, where $X(t) \in T_{\gamma(t)}$,
satisfying the Jacobi equation
\begin{equation}\label{eqn:Jacobi}
	\nabla_{\dot{\gamma}(t)}\nabla_{\dot{\gamma}(t)} X(t) + R(X(t), \dot{\gamma}(t))\dot{\gamma}(t) = 0 \quad (t \in [0,l]).
\end{equation}
This is a linear differential equation.
Then, the solution $X(t)$ is uniquely determined 
under the initial values $X(0), \nabla_{\dot{\gamma}(0)}X(0)$, or under boundary values $X(0), X(l)$. 
\begin{Lem}[{see \cite[p.35, 36]{Sakai96}}]\label{lem:Jacobi}
	Let $\alpha:[0,l] \times (- \epsilon, \epsilon) \to M$ be 
	a smooth map such that the curve
	$t \mapsto \alpha(t,s)$
		is a geodesic for each $s \in (-\epsilon,\epsilon)$.
	Then $D\alpha(t,0) (\frac{\partial}{\partial s})$ 
	is a Jacobi field along geodesic $t \mapsto \alpha(t,0)$.
\end{Lem}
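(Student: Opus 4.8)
The plan is to derive the Jacobi equation (\ref{eqn:Jacobi}) directly from the variation, using three standard ingredients: the geodesic equation, the symmetry lemma, and the definition of curvature, all for the connection pulled back along the two-parameter map $\alpha$. Write $T := D\alpha(\frac{\partial}{\partial t})$ and $S := D\alpha(\frac{\partial}{\partial s})$; these are vector fields along $\alpha$, and $\nabla$ induces covariant derivatives $\nabla_{\partial/\partial t}$ and $\nabla_{\partial/\partial s}$ on such fields (see \cite[II.2, II.3]{Sakai96}). The vector field whose Jacobi property we must establish is $J(t) = S(t,0)$, a vector field along the geodesic $\gamma(t) := \alpha(t,0)$, and $\dot\gamma(t) = T(t,0)$.

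First I would record the three facts. (i) Since the coordinate vector fields on the parameter rectangle commute, $\left[\frac{\partial}{\partial t},\frac{\partial}{\partial s}\right]=0$, and hence torsion-freeness of $\nabla$ gives the symmetry lemma $\nabla_{\partial/\partial t} S = \nabla_{\partial/\partial s} T$. (ii) The hypothesis that each curve $t \mapsto \alpha(t,s)$ is a geodesic is exactly $\nabla_{\partial/\partial t} T = 0$ for all $(t,s)$. (iii) Again because $\left[\frac{\partial}{\partial t},\frac{\partial}{\partial s}\right]=0$, the curvature of the pulled-back connection acts as $\nabla_{\partial/\partial t}\nabla_{\partial/\partial s} Z - \nabla_{\partial/\partial s}\nabla_{\partial/\partial t} Z = R(T,S)Z$ for every vector field $Z$ along $\alpha$.

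Then the computation is immediate. Applying the symmetry lemma, then (iii) with $Z = T$, then (ii):
\[
\nabla_{\partial/\partial t}\nabla_{\partial/\partial t} S = \nabla_{\partial/\partial t}\nabla_{\partial/\partial s} T = \nabla_{\partial/\partial s}\nabla_{\partial/\partial t} T + R(T,S)T = R(T,S)T.
\]
Restricting to $s = 0$, where $T = \dot\gamma$ and $S = J$, and using antisymmetry $R(T,S)T = -R(S,T)T$, this reads $\nabla_{\dot\gamma(t)}\nabla_{\dot\gamma(t)} J(t) + R(J(t),\dot\gamma(t))\dot\gamma(t) = 0$, which is precisely (\ref{eqn:Jacobi}).

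The only real obstacle is foundational rather than computational: one must set up the pulled-back connection along the two-parameter map $\alpha$ and justify the symmetry lemma and the curvature identity (iii) in that setting, rather than for genuine vector fields on $M$. Since this machinery is developed carefully in \cite[II.2, II.3]{Sakai96}, I would cite it rather than reproduce it; no new estimate is involved, and the result follows from the displayed four-line computation.
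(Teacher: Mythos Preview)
Your argument is correct and is precisely the standard proof; the paper itself does not supply a proof but simply cites \cite[p.~35, 36]{Sakai96}, where exactly this computation (symmetry lemma, geodesic equation, curvature commutator) appears. So your proposal coincides with the approach the paper defers to.
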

The Hessian of $f = f_p$ is given as follows.
Recall distance function $d = d_p: M \to \RR$.
\begin{Lem}[{see \cite[p.108--110]{Sakai96}}]\label{lem:Dd^2Hd^2}
	Let $x \in M$ with $p \neq x$, and let $\gamma:[0,l] \to M$ be a geodesic
	with $\gamma(0) = p$, $\gamma(l) = x$, and $l = d(p,x)$.
	For $u \in T_x$, it holds	
	\begin{itemize}
		\item[(1)] $Dd (u) = \langle \dot{\gamma}(l),u \rangle$.
		\item[(2)] $Df (u) = d(p,x) \langle \dot{\gamma}(l),u \rangle$. 
		\item[(3)] $Hf (u,u) = d(p,x) \langle \nabla_{\dot{\gamma}(l)}X(l),u \rangle$, where $X$ is the Jacobi field along $\gamma$ under the boundary condition
		\[
		X(0) = 0, \quad X(l) = u.
		\]
	\end{itemize}
\end{Lem}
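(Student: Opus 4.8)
The plan is to express $f=f_p$, $d=d_p$ and their derivatives through a geodesic variation based at $p$ and to read off the first and second variations of the energy functional. Fix $x\neq p$ and let $\gamma:[0,l]\to M$ be the unit-speed minimizing geodesic with $\gamma(0)=p$, $\gamma(l)=x$, and $l=d(p,x)$; since $M$ is Hadamard this geodesic is unique, is globally minimizing, and $\exp_p$ is a diffeomorphism. Given $u\in T_x$, pick a curve $c:(-\epsilon,\epsilon)\to M$ with $c(0)=x$, $\dot c(0)=u$, and set
\[
\alpha(t,s):=\exp_p\bigl(t\,\exp_p^{-1}(c(s))\bigr),\qquad (t,s)\in[0,1]\times(-\epsilon,\epsilon),
\]
which is smooth because $\exp_p$ is a diffeomorphism. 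Each curve $\alpha(\cdot,s)$ is an affinely parametrized minimizing geodesic from $p$ to $c(s)$, so $\|\partial_t\alpha(t,s)\|$ is independent of $t$ and equals $d(p,c(s))$; hence the energy satisfies $E(s):=\int_0^1\|\partial_t\alpha(t,s)\|^2\,{\rm d}t=d(p,c(s))^2=2f(c(s))$. Writing $\beta:=\alpha(\cdot,0)$, one has $\beta(t)=\gamma(lt)$, so $\partial_t\alpha(t,0)=l\,\dot\gamma(lt)$; and by Lemma~\ref{lem:Jacobi} the field $J(t):=\partial_s\alpha(t,0)$ is a Jacobi field along $\beta$ with $J(0)=0$ (since $\alpha(0,\cdot)\equiv p$) and $J(1)=u$ (since $\alpha(1,\cdot)=c$).

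To obtain (2), I would compute the first variation of $E$: using the symmetry $\nabla_{\partial_s}\partial_t\alpha=\nabla_{\partial_t}\partial_s\alpha$, that each $\alpha(\cdot,s)$ is a geodesic (so $\nabla_{\partial_t}\partial_t\alpha\equiv 0$), and $\partial_s\alpha(0,s)=0$, integration by parts in $t$ gives
\[
\tfrac12 E'(s)=\bigl\langle \partial_s\alpha(1,s),\,\partial_t\alpha(1,s)\bigr\rangle .
\]
At $s=0$ this reads $Df(u)=\tfrac12 E'(0)=\langle u,\,l\,\dot\gamma(l)\rangle=d(p,x)\langle\dot\gamma(l),u\rangle$, which is (2); statement (1) follows at once from $d_p=\sqrt{2f_p}$ on $M\setminus\{p\}$ (or from the same computation applied to the arc length in place of the energy, using that $\langle J,\dot\beta\rangle$ is affine in $t$).

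For (3) I would specialize $c(s):=\exp_x(su)$, so that $\nabla_{\partial_s}\dot c\equiv 0$; by Lemma~\ref{lem:Hf(u,u)} we then have $Hf(u,u)=\tfrac{{\rm d}^2}{{\rm d}s^2}\big|_{0}f(c(s))=\tfrac12 E''(0)$. Differentiating the first-variation identity once more, the term carrying $\nabla_{\partial_s}\partial_s\alpha(1,0)=\nabla_{\dot c}\dot c(0)=0$ vanishes and one is left with
\[
Hf(u,u)=\tfrac12 E''(0)=\bigl\langle u,\ \nabla_{\partial_s}\partial_t\alpha(1,0)\bigr\rangle=\bigl\langle u,\ (\nabla_{\dot\beta}J)(1)\bigr\rangle .
\]
Finally, the affine reparametrization $t\mapsto lt$ turns $J$ into the Jacobi field $X(t):=J(t/l)$ along $\gamma$, which satisfies $X(0)=0$, $X(l)=u$, and $(\nabla_{\dot\beta}J)(1)=l\,(\nabla_{\dot\gamma(l)}X)(l)$; substituting gives $Hf(u,u)=d(p,x)\langle\nabla_{\dot\gamma(l)}X(l),u\rangle$, with $X$ uniquely determined because a Hadamard manifold has no conjugate points.

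The hard part, such as it is, will be bookkeeping rather than any deep ingredient: one must keep the affine geodesic $\alpha(\cdot,s)$ on $[0,1]$ and the unit-speed $\gamma$ on $[0,l]$ consistently aligned — this is exactly where the factors $l=d(p,x)$ in (2) and (3) enter — and one must make sure the identity $E(s)=2f(c(s))$ genuinely uses the global minimality of geodesics in a Hadamard manifold (equivalently, the absence of conjugate points, which is also what makes the boundary-value Jacobi field $X$ in (3) well defined). The supporting facts (first and second variation of energy, the symmetry $\nabla_{\partial_s}\partial_t\alpha=\nabla_{\partial_t}\partial_s\alpha$, reparametrization invariance of Jacobi fields, and Lemma~\ref{lem:Jacobi}) are standard and carry the rest.
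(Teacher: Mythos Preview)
Your proposal is correct and is essentially the standard first/second variation-of-energy argument that the paper defers to (the lemma is not proved in the paper; it is only stated with a reference to Sakai, pp.~108--110, which carries out exactly this computation). The bookkeeping with the affine parametrization on $[0,1]$ versus the unit-speed $\gamma$ on $[0,l]$, and the identification $(\nabla_{\dot\beta}J)(1)=l\,(\nabla_{\dot\gamma}X)(l)$ via $X(t)=J(t/l)$, are handled correctly.
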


Suppose that $M$ is a hyperbolic space with curvature $- \kappa < 0$.
It is known \cite[II. Lemma 3.3]{Sakai96}
that the curvature tensor $R$ is written as
\begin{equation}\label{eqn:R(X,Y)Z_kappa}
R(X,Y)Z = - \kappa (\langle Y,Z \rangle X - \langle X,Z\rangle Y),
\end{equation}
from which solutions of the Jacobi equation
are explicitly written.

For a positive real $\alpha > 0$,  
consider the change of metric
$\langle, \rangle \to \alpha \langle,\rangle$.
The distance $d$ changes as $d \to \sqrt{\alpha}d$.
The Levi-Civita connection $\nabla$ and curvature tensor $R$ do not change.
The sectional curvature $- \kappa$ changes as $- \kappa \to - \kappa/\alpha$. 
Therefore, by changing metric, we may assume 
\begin{equation}
	\kappa =  1.
\end{equation}
This normalization is for simplicity of the equations we will compute.

\begin{Lem}\label{lem:nablaHf_hyperbolic} Let $x \in M$ with $x \neq p$ and let $\gamma:[0,l] \to M$ be the geodesic from $p$ to $x$ with $l = d(p,x)$. Then it holds
	\begin{eqnarray}
			Hf (u,u) &=& 	(l \coth l) \left(\langle u,u \rangle - \langle u, \dot{\gamma}(l) \rangle^2 \right)  +  \langle u, \dot{\gamma}(l)\rangle^2,  \label{eqn:Hd^2(u,u)}\\
			\nabla_{v} Hf (u,u) &=& \Phi(l) 
		\langle v, \dot \gamma (l) \rangle \left( \langle u, u \rangle - \langle u, \dot\gamma(l)\rangle^2 \right)  \nonumber \\
		&& + 2  \left( l - \Phi(l) \right)
		\langle u, \dot\gamma(l)\rangle \left( \langle v, \dot \gamma (l) \rangle \langle u, \dot\gamma(l)\rangle - \langle u, v \rangle \right),
	\end{eqnarray}
	where $\Phi: \RR \to \RR$ is defined by
	\[
	\Phi(z) := (z \coth z)' = \coth z  + z - z \coth^2 z.
	\]
\end{Lem}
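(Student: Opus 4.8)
The plan is to combine the Jacobi-field description of the Hessian in Lemma~\ref{lem:Dd^2Hd^2}(3) with the explicit curvature tensor (\ref{eqn:R(X,Y)Z_kappa}) of the normalized ($\kappa=1$) hyperbolic space, and then to differentiate the resulting closed form for $Hf$ intrinsically, without choosing a model. First I would derive (\ref{eqn:Hd^2(u,u)}). Fix the unit-speed geodesic $\gamma:[0,l]\to M$ from $p$ to $x$, and let $X$ be the Jacobi field with $X(0)=0$, $X(l)=u$. Since $\gamma$ has unit speed, (\ref{eqn:R(X,Y)Z_kappa}) turns the Jacobi equation (\ref{eqn:Jacobi}) into $\nabla_{\dot\gamma}\nabla_{\dot\gamma}X=X-\langle X,\dot\gamma\rangle\dot\gamma$. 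Splitting $X$ into the part along $\dot\gamma$ (whose coefficient then satisfies $a''=0$) and the part orthogonal to $\dot\gamma$ (whose components in a parallel frame satisfy $h''=h$), and imposing the boundary conditions, gives $a(t)=(t/l)\langle u,\dot\gamma(l)\rangle$ and $X^{\perp}(t)=(\sinh t/\sinh l)\,W(t)$ for the parallel field $W$ with $W(l)=u-\langle u,\dot\gamma(l)\rangle\dot\gamma(l)$. Differentiating at $t=l$ and inserting into Lemma~\ref{lem:Dd^2Hd^2}(3) yields (\ref{eqn:Hd^2(u,u)}) after writing $\langle u-\langle u,\dot\gamma(l)\rangle\dot\gamma(l),u\rangle=\langle u,u\rangle-\langle u,\dot\gamma(l)\rangle^2$.

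For the covariant derivative I would set $N:=\grad d_p$, so that $N_x=\dot\gamma(l)$ and $Dd_p=\langle N,\cdot\rangle$ by Lemma~\ref{lem:Dd^2Hd^2}(1), and in particular $v(d_p)=\langle N,v\rangle$. Using $f_p=\tfrac12 d_p^2$, hence $Hf_p=Dd_p\otimes Dd_p+d_p\,Hd_p$ (away from $p$), the formula (\ref{eqn:Hd^2(u,u)}) immediately gives the closed form of the Hessian of the distance, $Hd_p(v,w)=\coth(d_p)\,(\langle v,w\rangle-\langle N,v\rangle\langle N,w\rangle)$. I would then write
\[
Hf_p = Dd_p\otimes Dd_p + (d_p\coth d_p)\,\bigl(g - Dd_p\otimes Dd_p\bigr),
\]
where $g=\langle\cdot,\cdot\rangle$, and apply $\nabla_v$ termwise: $\nabla g=0$; $\nabla_v Dd_p=Hd_p(v,\cdot)$ by the very definition of the Hessian, so $\nabla_v(Dd_p\otimes Dd_p)(u,u)=2\langle N,u\rangle\,Hd_p(v,u)$; and $v(d_p\coth d_p)=\Phi(d_p)\,v(d_p)=\Phi(l)\langle N,v\rangle$ by the chain rule with $\Phi=(z\coth z)'$. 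Collecting the terms gives
\[
\nabla_v Hf_p(u,u) = \Phi(l)\langle N,v\rangle\bigl(\langle u,u\rangle-\langle N,u\rangle^2\bigr) + 2(1-l\coth l)\langle N,u\rangle\,Hd_p(v,u).
\]

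Finally I would substitute $Hd_p(v,u)=\coth l\,(\langle v,u\rangle-\langle N,v\rangle\langle N,u\rangle)$ into the second summand, turning it into $2\langle N,u\rangle(\coth l-l\coth^2 l)(\langle v,u\rangle-\langle N,v\rangle\langle N,u\rangle)$; the scalar identity $(1-l\coth l)\coth l=\coth l-l\coth^2 l=\Phi(l)-l$, immediate from $\Phi(z)=\coth z+z-z\coth^2 z$, rewrites this as $2(l-\Phi(l))\langle N,u\rangle(\langle N,v\rangle\langle N,u\rangle-\langle v,u\rangle)$, which is exactly the claimed expression once $N=\dot\gamma(l)$. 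The genuinely delicate points are the boundary-value solve of the Jacobi equation and the legitimacy of the termwise differentiation of $Hf_p$ as a tensor (which rests on $d_p$, $N$ being smooth away from $p$ on a Hadamard manifold and on $\nabla_v Dd_p=Hd_p(v,\cdot)$ holding by definition); I expect the main obstacle to be bookkeeping rather than anything conceptual, and the only place the precise form of $\Phi$ is needed is the identity $(1-l\coth l)\coth l=\Phi(l)-l$.
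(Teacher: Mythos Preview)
Your argument is correct. The derivation of (\ref{eqn:Hd^2(u,u)}) is essentially identical to the paper's: both solve the Jacobi equation by splitting into tangential and normal parts and then apply Lemma~\ref{lem:Dd^2Hd^2}(3).

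For $\nabla_v Hf(u,u)$, however, you take a genuinely different route. The paper works directly from the definition (\ref{eqn:nabla_uG}) of the covariant derivative via parallel transport: it introduces the geodesic $c(s)=\exp_x sv$, the one-parameter family $\gamma_s$ of geodesics from $p$ to $c(s)$, parallel-transports $u$ along $c$, rewrites $Hf_{c(s)}(u_s,u_s)$ using (\ref{eqn:Hd^2(u,u)}), and differentiates in $s$. This forces a second Jacobi-field computation, namely identifying $\nabla_{\dot c(0)}\dot\gamma_s(l)\big|_{s=0}$ as $\nabla_{\dot\gamma(l)}Y(l)$ for the Jacobi field $Y$ with $Y(0)=0$, $Y(l)=v$, via Lemma~\ref{lem:Jacobi}. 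You instead promote (\ref{eqn:Hd^2(u,u)}) to a global tensor identity $Hf_p = Dd_p\otimes Dd_p + (d_p\coth d_p)\,(g-Dd_p\otimes Dd_p)$ on $M\setminus\{p\}$ and differentiate with the Leibniz rule, using only $\nabla g=0$, $\nabla_v Dd_p = Hd_p(v,\cdot)$, and the scalar chain rule $v(d_p\coth d_p)=\Phi(d_p)\langle N,v\rangle$. This bypasses the second Jacobi field entirely and makes the appearance of $\Phi$ and of the factor $(1-l\coth l)\coth l=\Phi(l)-l$ transparent. The paper's approach has the virtue of staying close to the primitive definition of $\nabla_v Hf$ and of illustrating how the variation-of-geodesics machinery enters; yours is shorter and more structural, at the cost of relying on the smoothness of $d_p$ and $N=\grad d_p$ away from $p$ (which indeed holds on a Hadamard manifold) and on recognizing $Hd_p$ in closed form first.
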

Explicit computation of Jacobi fields and the Hessian $Hf(u,u)$ in a hyperbolic space 
can be found in several textbooks; see e.g., \cite[p. 136, p. 154]{Sakai96}. 
We include it for computation of $\nabla_v Hf(u,u)$.
\begin{proof}
	We explicitly compute the Jacobi field $X = (X(t))_{t \in [0,l]}$ along $\gamma$, 
	under boundary condition $X(0) = 0$, $X(l) = u$, where
	\cite[p.136]{Sakai96} has similar computation we consult. 
	Decompose $X(t)$ as
	\[
	X(t) = \tilde X(t) + b(t) \dot{\gamma}(t),
	\]
	where $b(t) := \langle X(t), \dot{\gamma}(t) \rangle$ and 
	$\tilde X(t) := X(t) - b(t) \dot{\gamma}(t)$.
	Namely $\tilde X(t)$ is the component of $X(t)$ orthogonal to $\dot{\gamma}(t)$.
	By  (\ref{eqn:R(X,Y)Z_kappa}), it holds 
	$R(X(t), \dot\gamma(t))\dot\gamma(t) = 
	-  (\langle \dot\gamma(t),\dot \gamma(t)\rangle X(t)- \langle \dot \gamma(t),X(t)\rangle \dot\gamma(t)) = - \tilde X(t)$, where $\langle \dot\gamma(t),\dot \gamma(t)\rangle = 1$
	by $l = d(p,x)$.
	Parallel transports of $\tilde X(t)$ 
	along $\gamma$ keep orthogonal to the tangent direction of $\gamma$.
	From this fact, 
	the Jacobi equation is decomposed as
	\[
	{b}''(t) = 0,\quad 	\nabla_{\dot{\gamma}(t)}\nabla_{\dot{\gamma}(t)} \tilde X (t) - 
	\tilde X(t) = 0.
	\]
	Represent $\tilde X(t) = \sum_{i} \alpha_i(t) \tau_{\gamma,t} e_i$, 
	where $\{e_i\}$ is a basis of 
	the subspace of $T_{\dot \gamma(0)}$ orthogonal to $\dot \gamma(0)$.
	Then
	the second equation becomes $\alpha_i''(t) - \alpha_i(t) = 0$. Thus we obtain
	\[
	b(t) = b(0) + b'(0)t,\quad  \tilde X(t) = (\cosh t) \tau_{\gamma,t}\tilde X(0) + (\sinh t)  \tau_{\gamma,t} \nabla_{\dot{\gamma}(0)}\tilde X(0).
	\]
	Consider the boundary condition $X(0) = 0$ and $X(l) = u$.
	By  $X(0) = 0$, it must hold  $\tilde X(0) = 0$ and $b(0) = 0$.
	Since $\tau_{\gamma,l}\nabla_{\dot{\gamma}(0)}\tilde X(0)$ is orthogonal to $\dot\gamma(l) = \tau_{\gamma,l} \dot \gamma(0)$,
	$X(l) = b'(0)l \dot\gamma(l) + (\sinh l)  \tau_{\gamma,t} \nabla_{\dot{\gamma}(0)}\tilde X(0) 
	= u$ implies
	\[
	b'(0) = \langle u, \dot\gamma(l) \rangle/l, \quad \tau_{\gamma,l} \nabla_{\dot\gamma(0)}\tilde X(0) = (u- \dot \gamma(l)\langle u, \dot\gamma(l)\rangle )/\sinh l.
	\]
Then $\nabla_{\dot\gamma(l)} X(l)$ is given by
\begin{equation}\label{eqn:nabla_X(l)}
\nabla_{\dot\gamma(l)} X(l) = \dot\gamma(l)\langle u, \dot\gamma(l)  \rangle/l +
\ (\coth l) (u- \dot \gamma(l)\langle u, \dot\gamma(l)\rangle). 
\end{equation}
Apply Lemma~\ref{lem:Dd^2Hd^2}~(3) to obtain (\ref{eqn:Hd^2(u,u)}).

	Consider the geodesic $s \mapsto c(s) := \exp_x s v$.
	Let $\gamma_{s}:[0, l] \to M$ be the geodesic from $p$ to $c(s)$
	(not parametrized by the arc-length). 
	For $s \in (-\epsilon, \epsilon)$,  let $l_s := d(p,c(s))$ and $u_s := \tau_{c,s}u$.
	By (\ref{eqn:Hd^2(u,u)}) with 
	reparametrized geodesic $t \mapsto \gamma_s((l /l_s)t)$ $(t \in [0,l_s])$, 
	we have
	\begin{equation}\label{eqn:u_s}
	(Hf)_{c(s)}(u_s,u_s) = 
	(l_s \coth l_s) \langle u_s,u_s \rangle 
	+  \left(1 - 
	 l_s \coth l_s\right) (l/l_s)^2 \langle u_s, \dot\gamma_s(l)\rangle^2. 
	\end{equation}
	By (\ref{eqn:nabla_uG}), the covariant derivative $\nabla_vHf(u,u)$ is obtained by
	computing $\partial_{s \to 0} := d/ds\mid_{s \to 0}$ of (\ref{eqn:u_s}). We use
	\[
	\partial_{s \to 0} l_s  = \langle \dot \gamma (l), v \rangle, \quad \partial_{s \to 0} \langle u_s, u_s\rangle = 0,\quad \partial_{s \to 0} \langle u_s, \dot \gamma_s(l) \rangle = \langle u, \left. \nabla_{\dot c(s)} \dot \gamma_s(l) \right|_{s \to 0} \rangle,
	\]
	where the first one follows from Lemma~\ref{lem:Dd^2Hd^2} (1) and 
	the other follow from 
	$X \langle Y,Z\rangle = \langle \nabla_X Y,Z\rangle + \langle Y, \nabla_X Z\rangle$ 
	and $\nabla_{\dot c(s)} u_s = 0$.
Hence we have
\begin{eqnarray}
	\nabla_{v} f(u,u) &=& \Phi(l)
	\langle \dot \gamma (l), v \rangle \langle u, u \rangle + \left( - \Phi(l) - 2/l + 2 \coth l \right)	\langle \dot \gamma (l), v \rangle \langle u, \dot\gamma(l)\rangle^2 \nonumber \\
	&&  + 2 \left(1 - 
	l \coth l \right) \langle u, \dot\gamma(l) 
	\rangle \langle u, \left. \nabla_{\dot c(s)} \dot \gamma_s(l) \right|_{s \to 0}  \rangle. \nonumber \\
	&=& \Phi(l)
	\langle \dot \gamma (l), v \rangle (\langle u, u \rangle -  \langle u, \dot \gamma (l) \rangle^2) \nonumber \\
	&&+  2 \left(1 - 
	l \coth l \right) \langle u, \dot\gamma(l) \rangle( \langle u, \left. \nabla_{\dot c(s)} \dot \gamma_s(l) \right|_{s \to 0}\rangle  - \langle v, \dot \gamma (l) \rangle \langle u, \dot\gamma(l)\rangle/l  ). \nonumber
	\label{eqn:nabla_vf(u,u)} 
\end{eqnarray}
	To compute $\nabla_{\dot c(s)} \dot \gamma_s(l) |_{s \to 0}$, 
	consider the (smooth) map $\alpha:[0,l] \times (-\epsilon, \epsilon) \to M$
such that $(t,s) \mapsto \gamma_s(t)$. Let 
$\frac{\partial \alpha}{\partial s}(t,s) := D\alpha (t,s)(\frac{\partial}{\partial t})$ and 
$\frac{\partial \alpha}{\partial t}(t,s) := D\alpha (t,s)(\frac{\partial}{\partial s})$.
Then  $\left. \nabla_{\dot c(s)} \dot \gamma_s(l) \right|_{s \to 0}=
\nabla_{\frac{\partial \alpha}{\partial s}} \frac{\partial \alpha}{\partial t}(l,0) =\nabla_{\frac{\partial \alpha}{\partial t}} \frac{\partial \alpha}{\partial s} (l,0)$, 
since $\nabla_{\frac{\partial \alpha}{\partial s}} \frac{\partial \alpha}{\partial t} = \nabla_{\frac{\partial \alpha}{\partial t}} \frac{\partial \alpha}{\partial s} + [\frac{\partial \alpha}{\partial s}, \frac{\partial \alpha}{\partial t}]$ 
and $[\frac{\partial \alpha}{\partial s}, \frac{\partial \alpha}{\partial t}] = D\alpha ([\frac{\partial}{\partial s}, \frac{\partial}{\partial t}]) = 0$; see \cite[II. Lemma 2.2]{Sakai96}.
By Lemma~\ref{lem:Jacobi},  
$Y(t) := \frac{\partial \alpha}{\partial s} (t,0)$
is a Jacobi field along the geodesic $\gamma$, and satisfies 
$Y(0) = 0$ and $Y(l) = v$. 
By (\ref{eqn:nabla_X(l)}), it holds
\begin{equation*}
	\left. \nabla_{\dot c(s)} \dot \gamma_s(l) \right|_{s \to 0}=
	\nabla_{\dot\gamma(l)} Y(l)  = \dot\gamma(l)\langle v, \dot\gamma(l)  \rangle/l +
	 (\coth l) (v- \dot \gamma(l)\langle v, \dot\gamma(l)\rangle).\label{eqn:nabla_dot_cs}
\end{equation*}
By substituting it, we obtain (\ref{eqn:proof_nabla_vHd^2(u,u)}).
\end{proof}


\paragraph{Proof of Theorem~\ref{thm:SC_hyperbolic}.}
We are going to bound 
\[
\sigma_x(u,v) := \frac{|\nabla_v Hf(u,u)|}{2 Hf (v,v)^{1/2} Hf(u,u)} \quad (u,v \in T_x).
\]
From $d(p,x) = l$, it holds $\|\dot \gamma(l)\| = 1$. 
We can also assume that $\|u\| = \|v\| = 1$.
Therefore, $u,v, \dot\gamma(l)$ can be assumed unit vectors in $\RR^3$ and  
represented in the spherical coordinate system as
$\dot \gamma(l) = (0, 0, 1)$, $u = (\sin \theta, 0,\cos \theta)$, 
$v = (\sin \varphi \cos \alpha, \sin \varphi \sin \alpha,\cos \varphi)$
for $\theta, \varphi \in [0, \pi]$ and $\alpha \in [0,2\pi]$.
By Lemma~\ref{lem:nablaHf_hyperbolic}, we have
\begin{eqnarray}
	Hf(v,v) & =&  \cos^2 \varphi + (l \coth l)  \sin^2 \varphi,\label{eqn:proof_Hd^2(v,v)}\\ 
	Hf(u,u) &= & \cos^2 \theta +   (l \coth l) \sin^2 \theta, \label{eqn:proof_Hd^2(u,u)}\\
  \nabla_{v} Hf(u,u) &=& \Phi(l) 
	\cos \varphi \sin^2 \theta + 2 \left( l - \Phi(l) \right)
	 \cos \theta \sin \varphi \sin \theta (- \cos \alpha ). \label{eqn:proof_nabla_vHd^2(u,u)}
\end{eqnarray}
%
By elementary calculus\footnote{Use $\sinh z = z + (1/3!)z^3+\cdots$ and $\cosh z = z^2 + (1/4!)z^4+\cdots$ to show $\lim_{z \to 0} z \coth z = 1$  and 
	$\Phi(0) = 0 \leq \Phi'(z)$ for $z \geq 0$}, 
we see
\begin{equation}
	0 \leq \Phi(z) \leq \min (1,z) \quad (z \in [0,\infty)).
\end{equation}
In (\ref{eqn:proof_nabla_vHd^2(u,u)}), the quantities
	$\Phi(l)$, $l - \Phi(l)$, $\sin \theta$, and $\sin \varphi$ are all nonnegative.   
	Thus
	\begin{equation}
		|\nabla_{v} Hf(u,u)| \leq  \Phi(l) 
	|\cos \varphi| \sin^2 \theta  
	+ 2 \left( l - \Phi(l) \right)
	\sin \varphi \sin \theta |\cos \theta|. 
	\end{equation}
	For $C := l \coth l \geq 1$, observe
\begin{eqnarray*}
	&&  \max_{\phi \in [0,\pi]} \frac{|\cos \phi|}
	{\sqrt{\cos^2 \phi + C \sin^2 \phi}} = 1,\quad   
	\max_{\phi \in [0,\pi]} \frac{\sin \phi}
	{\sqrt{\cos^2 \phi + C \sin^2 \phi}} = \frac{1}{\sqrt{C}}, \\
	&& \max_{\theta \in [0,\pi]} \frac{\sin \theta |\cos \theta|}
	{\cos^2 \theta + C \sin^2 \theta}  =
	\max_{\theta \in [0,\pi]}\frac{|\tan \theta|}{1+ C \tan^2 \theta}
	=  \max_{z \in [0,\infty)} \frac{z}{1+Cz^2}=
	\frac{1}{2\sqrt{C}}.
\end{eqnarray*}
Then 
	\begin{eqnarray*}
	\sigma_x(u,v) & \leq & \max_{\varphi, \theta \in [0,\pi]} \frac{\Phi(l) |\cos \varphi| \sin^2 \theta + 2(l - \Phi(l))\sin \varphi \sin \theta |\cos \theta|}{
	2 \sqrt{\cos^{2} \varphi + C \sin^2\varphi} \left(\cos^2 \theta + C \sin^2 \theta \right)} \\ 
	&\leq & \Phi(l)/2C + ( l - \Phi(l)) /2C =  (\tanh l)/2 \leq 1/2. 
	\end{eqnarray*}
Thus $f$ is $1/2$-SC.

Choose $\varphi=\pi/2$, $\tan^2 \theta = 1/C$, and $\alpha\in \{0,\pi\}$. From (\ref{eqn:proof_nabla_vHd^2(u,u)}) we have
\[
\sigma_x(u,v) = \frac{(l - \Phi(l)) |\cos \theta| \sin \theta}
{\sqrt{C} (\cos^2 \theta + C \sin^2 \theta)} =
\frac{l - \Phi(l)}{2C} = (l - \Phi(l))\frac{\tanh l}{2l}.
\]
For $l \to \infty$, it holds $\sigma_x(u,v) \to 1/2$. Hence the constant $1/2$ is tight.
This completes the proof of (1).

Showing (2), we consider the case of $u = v$; 
then $\varphi = \theta$ and $\alpha = 0$.
From (\ref{eqn:proof_nabla_vHd^2(u,u)}), we have 
\begin{equation}\label{eqn:proof_nabla_u_Hd^2(u,u)}
	\nabla_u Hf(u,u) = ( - 2 l+ 3 \Phi(l) )
	\cos \theta \sin^2 \theta.
\end{equation}
Then we have 
\begin{eqnarray*}
&&\sup_{u \in T_x} \sigma_x(u,u) =  \max_{\theta \in [0,\pi/2]} \frac{ | 2l - 3 \Phi(l) |  
 \tan^2 \theta}{2(1 + C \tan^2 \theta )^{3/2}} = \max_{z \in [0,\infty)} \frac{ 
	 | 2l - 3 \Phi(l) | z}{2(1 + C z )^{3/2}} \\
&& =  \frac{| 2l - 3 (\coth l + l - l\coth^2 l) |}{\sqrt{27} C}  = \frac{ | - 3/l - \tanh l  + 3 \coth l |}{\sqrt{27}},
\end{eqnarray*}
where the maximum of $z/(1+ C z)^{3/2}$  is attained at 
$z = 2/C = 2 (\tanh l)/l$.
The supremum of the last quantity is attained at $l  \to \infty$, and equals $2/\sqrt{27}$.
This means that $f$ is $\sqrt{4/27}$-WSC, tightly.

	By recovering the normalization $f \to (1/\kappa) f$, 
	we obtain tight $\sqrt{\kappa}/2$-SC and $\sqrt{4\kappa/27}$-WSC properties of the original $f$. 

\paragraph{Proof of Proposition~\ref{prop:tightness}.}
Let $x \in B_{p,R}^{\circ}$.
We bound $\sigma_{x}(u,u)$ from below. 
We use the notation $l, \theta$ in the previous proof. 
Let $\omega = \omega_{p,R,x}$ be defined by
$
\omega := R^2/2 - d(p,x)^2/2 = R^2/2 - f(x).
$
By (\ref{eqn:DF_HF}), Lemma~\ref{lem:Dd^2Hd^2} (2),  and  (\ref{eqn:proof_Hd^2(u,u)}), we have
\begin{eqnarray*}
	DF(u) &=& \frac{l \cos \theta}{\omega}, \\
	HF(u,u) &=&   \frac{1}{\omega} \left( \cos^2 \theta + l \coth l \sin^2 \theta \right) + \frac{l^2}{\omega^2} \cos^2 \theta\\
	&=& \frac{\omega+ l^2}{\omega^2}  \left( \cos^2 \theta + \left(\frac{\omega}{\omega+l^2} \right) l\coth l \sin^2 \theta \right).
\end{eqnarray*}
With  (\ref{eqn:nablaHF}) for $u=v$ and (\ref{eqn:proof_nabla_u_Hd^2(u,u)}), we have
\[
\nabla_u HF(u,u)= \frac{- 2 l+ 3 \Phi(l) }{\omega} 
\cos \theta \sin^2 \theta  + \frac{3 l }{\omega^2} \cos \theta ( \cos^2 \theta + l \coth l \sin^2 \theta  ) + \frac{2l^3}{\omega^3}\cos^3 \theta. 
\]
Take $\theta$ so that $\tan^2 \theta = \left(\frac{\omega+l^2}{\omega} \right)\frac{\tanh l}{l}$ (say).
Then $HF(u,u) = \frac{2(\omega+ l^2)}{\omega^2} \cos^2 \theta$, and
$\sigma_x(u,u)$ is given by 
\begin{eqnarray*}
	&& \frac{\omega^3}{2^{3/2}(\omega + l^2)^{3/2}}\left| 
	 \frac{-2l + 3 \Phi(l)}{\omega}
	\left(\frac{\omega+l^2}{\omega} \right)\frac{\tanh l}{l} \right. \left. +
	\frac{3l}{\omega^2} \left(\frac{\omega+l^2}{\omega} \right) + 
	 \left( \frac{3l}{\omega^2}+ 
	  \frac{2l^3}{\omega^3}\right)  \right| \\
	  && \geq \frac{ \omega}{2^{3/2}\sqrt{\omega+ l^2}}
	  \left| 2 \tanh l  - 3 \Phi(l) \frac{\tanh l}{l} \right| - \frac{3l} {2^{3/2}\sqrt{\omega + l^2}} 
	  - \frac{3\omega l + 2 l^3}{2^{3/2} (\omega + l^2)^{3/2}}\\
	  && \geq \frac{R^2 -l^2}{4\sqrt{R^2+l^2}}
	   \left| 2 \tanh l - 3 \Phi( l) \frac{\tanh l}{l}  \right| - \frac{5}{2^{3/2}}, 
\end{eqnarray*}
where $l/\sqrt{\omega+l^2} \leq 1$ and 
$(3\omega l + 2l^3)/(\omega+l^2)^{3/2}  =  2 (1+ (3/2)\alpha)/(1+ \alpha)^{3/2} \leq  2$
for $\alpha = \omega/l^2$ (see \cite[p. 177]{Nesterov04}).
Take $x$ such that $d(p,x)^2  = l^2 = R^2/2$ (say). For $R \to \infty$, 
the absolute value $|\cdot|$ converges to $2$, and
hence it holds $\sigma_x(u,u) = \Omega(R)$.

\section*{Acknowledgments}
The author was supported by JST PRESTO Grant Number JPMJPR192A, Japan.


\begin{thebibliography}{8}
\small
\bibitem{AMS07}
P. A. Absil, R. Mahony, R. Sepulchre:
{\em Optimization Algorithms on Matrix Manifolds.}
Princeton University Press, Princeton, NJ, 2008.

\bibitem{AGLOW}
Z. Allen-Zhu, A. Garg, Y. Li, R. Oliveira:
and A. Wigderson, 
Operator scaling via geodesically convex optimization, invariant theory and polynomial identity testing. {\tt arXiv:1804.01076}, 2018. (the conference version in STOC 2018)


\bibitem{AN00}
S. Amari and K. Nagaoka:
{\it Methods of Information Geometry.} 
American Mathematical Society, Providence, RI, 2000.

\bibitem{Boumal23}
N. Boumal: {\em An Introduction to Optimization on Smooth Manifolds.}
Cambridge University Press, Cambridge, 2023 (to appear).

\bibitem{BFGOWW}
P. B\"urgisser, C. Franks, A. Garg, R. Oliveira, 
M. Walter, and A. Wigderson:
Towards a theory of non-commutative optimization: 
geodesic first and second order methods 
for moment maps and polytopes. {\tt arXiv:1910.12375}, 2019. 
(the conference version in FOCS 2019)

\bibitem{BLNW}
P. B\"urgisser, Y. Li, H. Nieuwboer, and M. Walter:
Interior-point methods for unconstrained geometric programming and scaling problems.
{\tt arXiv:2008.12110}, 2020.


\bibitem{FR2021}
C. Franks and P. Reichenbach: 
Barriers for recent methods in geodesic optimization.
{\tt arXiv:2102.06652}, 2021.
(the conference version in CCC2021)


\bibitem{GGOW}
A. Garg, L. Gurvits, R. Oliveira, and A. Wigderson,
Operator scaling: theory and applications. 
{\em Foundations of Computational Mathematics}{\bf 20} (2020),223--290

\bibitem{HH22}
H. Hirai: Convex analysis on Hadamard spaces and scaling problems.
{\tt arXiv:2203.03193}, 2022.


\bibitem{JMM08}
H. Ji, J. H. Manton, and J. B. Moore: 
A globally convergent conjugate gradient method for 
minimizing self-concordant functions on Riemannian manifolds.
{\em IFAC Proceedings Volumes} {\bf 41} (2008), 14313--14318.

\bibitem{JMJ07}
 D. Jiang, J. B. Moore,  H. Ji: 
 Self-concordant functions for optimization on smooth manifolds. 
 {\em Journal of Global Optimization}  {\bf 38} (2007), 437--457. 


\bibitem{KOT13}
S. Kakihara, A. Ohara, T. Tsuchiya: 
Information geometry and interior-point algorithms in semidefinite programs and symmetric cone programs. 
{\em Journal of Optimization Theory and Applications} {\bf 157} (2013), 749--780.

\bibitem{KMY03}
P. Kumar, J. S. B. Mitchel, and E. A. Yildirim:
Approximate minimum enclosing balls in high dimensions using core-sets. 
{\em ACM Journal of Experimental Algorithmics} {\bf 8} 2003, 1.1-es.



\bibitem{Lee18}
J. M. Lee: {\em Introduction to Riemannian Manifolds.}  Springer, Cham, 2018. 

\bibitem{NH15}
F. Nielsen and G. Hadjeres:
Approximating covering and minimum enclosing balls in hyperbolic geometry.
{\em Geometric Science of Information},   
Lecture Notes in Computer Science {\bf 9389} (2015), 586--594. 

\bibitem{Nesterov04} 
Y. Nesterov: 
{\it Introductory Lectures on Convex Optimization. A Basic Course.}
Kluwer Academic Publishers, Boston, MA, 2004. 

\bibitem{Nesterov18} 
Y. Nesterov:
{\it Lectures on Convex Optimization.} 
Springer, Cham, 2018. 

\bibitem{NN94} 
Y. Nesterov and A. Nemirovskii:
{\it  Interior-Point Polynomial Algorithms in Convex Programming.} 
SIAM, Philadelphia, PA, 1994.

\bibitem{NT02} 
Y. Nesterov and M. J. Todd: 
On the Riemannian geometry defined by self-concordant barriers and interior-point methods. 
{\it Foundations of Computational Mathematics} {\bf 2} (2002), 333--361. 

\bibitem{NomizuSasaki}
K. Nomizu and T. Sasaki: {\em Affine Differential Geometry. Geometry of Affine Immersions.} Cambridge University Press, Cambridge, 1994. 

\bibitem{OT07}
A. Ohara and A. Tsuchiya: 
An information geometric approach to polynomial-time interior-point algorithms: 
complexity bound via curvature Integral. 
Research Memorandum No.1055, The Institute of Statistical Mathematics, 2007.

%

\bibitem{Renegar01}
J. Renegar: 
{\em A Mathematical View of Interior-Point Methods in Convex Optimization.} 
SIAM, Philadelphia, PA, 2001.

\bibitem{Rusciano19}
A. Rusciano: A Riemannian corollary of Helly's theorem, 
{\em Journal of Convex Analysis} {\bf 27} (2020), 1261--1275.

\bibitem{Sakai96} 
T. Sakai: 
{\it Riemannian Geometry.} 
American Mathematical Society, Providence, RI, 1996.


\bibitem{Sato21}
H. Sato: {\em Riemannian Optimization and Its Applications}, 
Springer, Cham, 2021.


\bibitem{Shima07} 
H. Shima:
{\it The Geometry of Hessian Structures.}
World Scientific, Hackensack, NJ, 2007.

\bibitem{SBH22}
M. Silva Louzeiro, R. Bergmann, and R. Herzog:
Fenchel duality and a separation theorem on Hadamard manifolds, 
{\it SIAM Journal on Optimization} {\bf 32} (2022), 854--873.


\bibitem{Udriste97}
C. Udriste: Optimization methods on Riemannian manifolds. 
{\em Algebras, Groups and Geometries} {\bf 14} (1997), 339--358.

\end{thebibliography}
\end{document}